\documentclass[reqno,12pt]{amsart}
\usepackage{a4wide}
\usepackage{amsmath}
\usepackage{amssymb}
\usepackage{amsthm}
\usepackage{epsfig,graphics,mathrsfs,graphicx,color}

\numberwithin{equation}{section}

\def\accentsfrancais{applemac}
   \RequirePackage[\accentsfrancais]{inputenc}

\newtheorem{theo}{Theorem}

\newtheorem{coro}{Corollary}
\newtheorem{prop}{Proposition}

\newtheorem{lem}{Lemma}

\theoremstyle{remark}
\newtheorem*{Remark}{Remark}

\def\zu{[0,1]}
\def\R{\mathbb{R}}

\def\rr{\rho}
\def\rhob{\overline{\rho}}

\def\al{\alpha}

\def\ep{\varepsilon}

\def\si{\sigma}

\def\({\left(}
\def\){\right)}
\def\[{\left[}
\def\]{\right]}

\def\dd{\textup{d}}

\def\si{\sigma}

\setcounter{tocdepth}{1}

\title[]{ Hardy-Littlewood series and\\ even continued fractions}

\author[]{Tanguy Rivoal and St\'ephane Seuret}
\date{\today}

\address{Tanguy Rivoal, Institut Fourier, CNRS et Universit\'e Grenoble 1, 
100 rue des maths, BP 74, 38402 St Martin d'H\`eres cedex, France.
}

\address{St\'ephane Seuret,
LAMA, UMR CNRS 8050,
Universit\'e Paris-Est, 
61 avenue du G\'en\'eral de Gaulle, 94010 Cr\'eteil Cedex, France. 
}

\subjclass[2000]{Primary 11J70, 11M411}

\keywords{Diophantine approximation, Theta series, Even continued fractions}

\begin{document}

\begin{abstract}
For any $s\in (1/2,1]$, the series $F_s(x)=\sum_{n=1}^{\infty} e^{i\pi n^2 x}/n^s$ converges 
almost everywhere on $[-1,1]$ by 
a result of Hardy-Littlewood concerning the growth of the sums 
$\sum_{n=1}^{N} e^{i\pi n^2 x}$, but not everywhere. However, there does not 
yet exist an intrinsic description 
of the set of convergence for $F_s$. 
In this paper, we define in terms of even continued fractions 
a subset of points of $[-1,1]$ of full measure where the series converges. 

As an intermediate step, we prove that, for $s>0$, the 
sequence of functions 
$$
\sum_{n=1}^{N} \frac{e^{i\pi n^2 x}} {n^s}
-e^{\textup{sign}(x)i\frac{\pi}4} \vert x\vert^{s-\frac12}
\sum_{n=1}^{\lfloor N \vert x\vert \rfloor} \frac{e^{-i\pi n^2/x}}{n^s}
$$ 
converges when $N\to \infty$ to a  
function $\Omega_s$ continuous 
on $[-1,1]\setminus\{0\}$ with (at most) a singularity at $x=0$ of type 
$x^{\frac{s-1}{2}}$ $(s\neq 1)$ or a logarithmic singularity ($s=1$). 
We provide an explicit expression for~$\Omega_s$ and the error term. 

Finally, we study thoroughly the convergence properties of certain series 
defined in term of the convergents of the even continued fraction of an irrational number.
\end{abstract}

\maketitle

\section{Introduction}\label{sec:0}

The famous lacunary Fourier series
\begin{equation}
\label{riemann}
\sum_{k=1}^{\infty} \frac{\sin(\pi k^2 x)}{k^2} 
\end{equation}
was proposed by Riemann in the 1850's as an example of continuous but nowhere 
differentiable function. Since then, this 
series has drawn much attention from many mathematicians 
(amongst them, Hardy and  Littlewood), 
and its complete local study was finally achieved by Gerver in \cite {Gerver} and Jaffard 
in \cite{JaffRiemann}. In particular, its local regularity at 
a point $x$ depends on the Diophantine type of $x$, and it is 
differentiable only at rationals $p/q$ where $p$ and $q$ are both odd. 

In this article, we study the series defined for  $(x,t)\in \R^2$ and $s\in \R^+$ by 
$$
F_s(x,t)=\sum_{k=1}^{\infty} \frac{e^{i\pi k^2 x+2i\pi k t}}{k^s}
$$
and we denote by $F_{s,n}(x,t)=\sum_{k=1}^{n} \frac{e^{i\pi k^2 x+2i\pi k t}}{k^s}$ its $n$-th partial sum. 
Both are periodic functions of period $2$ in $x$ and $1$ in $t$. 
For $s=2$ and $t=0$ the imaginary part of is~\eqref{riemann}.
For any fixed $t$, if $s>1/2$, $F_s$ is in $L^2(-1,1)$ and it converges almost everywhere by 
Carleson's theorem. It is not everywhere convergent however.
One of the aim of this paper is to understand better the convergence 
of $F_s(x,t)$ especially when $t=0$. 

\medskip

We set $\rho=\exp(i\pi/4)$, $\si(x)=1$, resp. $-1$ if $x>0$, resp. $x<0$, and $\sigma(0)=0$.
We define $\log(z)=\ln\vert z\vert+ i\arg(z)$ with $-\pi < \arg(z)\le \pi$. We denote by $\lfloor x \rfloor$ 
and $\{x\}$ the integer part and fractional part respectively of a real number $x$. 
For $x>0$,  $t\in \mathbb R$ and $s\ge 0$, we set 
\begin{multline}\label{eq:8}
I_s(x,t) = \int\limits_{1/2-\rho \infty}^{1/2+\rho \infty} \frac{e^{i\pi z^2 x+2i\pi z \{t\}}}
{z^s(1-e^{2i\pi z})} \dd z
\\
+\rho x^s\int\limits_{-\infty}^{\infty} e^{-\pi x u^2}
\left(\sum_{k=1}^{\infty} e^{-i\pi (k-\{t\})^2/x} 
\bigg(\frac{1}{(\rho xu+k-\{t\})^s}-\frac{1}{k^s}\bigg) \right) \dd u.
\end{multline}
This function is well-defined and if $s=0$, the second 
integral is equal to $0$ (because the series vanishes). 
We then define a function $\Omega_s(x,t)$ as follows: 
$$ \Omega_s(x,t)= \begin{cases} I_s(x,t) & \mbox{ when } x>0,
\\ 
\\
\overline{I_s(-x,-t)} & \mbox{ when } x<0.\end{cases}$$
For simplicity, given a function $f(x,t)$, we will write $f(x)$ for $f(x,0)$. The function $\Omega_s(x,t)$ will be 
particularly important in this paper.

\subsection{Statement of our main results}

Our first result is a   consequence of the celebrated 
``approximate functional equation for the theta series''  
of Hardy and Littlewood (Proposition~\ref{prop:H-L} 
in Section~\ref{sec:HL}), which corresponds exactly to the case $s=0$ 
in our Theorem~\ref{theo:1} below;  
see~\cite{choque} where many references and historical notes are given.

\begin{theo}\label{theo:4}
Let $x$ be an irrational number in $(0,1)$ whose (regular) continued 
fraction is denoted by $(P_k/Q_k)_{k\ge 0}$,  and let  $t\in \mathbb R$.

$(i)$ If $s\in(\frac12,1)$ and 
\begin{equation}\label{eq:cvQps1}
\sum_{k=0}^{\infty} \frac{Q_{k+1}^{\frac{1-s}2}}{Q_k^{s/2}}<\infty,
\end{equation}
then $F_s(x,t)$ is absolutely convergent.

$(ii)$ If $s=1$ and 
\begin{equation}\label{eq:cvQps2}
\sum_{k= 0}^{\infty} \frac{\log(Q_{k+1})}{\sqrt{Q_k}}<\infty,
\end{equation}
then $F_1(x,t)$ is absolutely convergent.
\end{theo}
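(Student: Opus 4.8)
The plan is to bootstrap the approximate functional equation — that is, the $N\to\infty$ limit of the expression in the abstract, which is the content of Theorem~\ref{theo:1} (the case $s>0$ of the Hardy–Littlewood approximate functional equation, with explicit error term $\Omega_s$). Writing $x=x_0\in(0,1)$ irrational with convergents $P_k/Q_k$, iterating the functional equation produces a renormalized version of $F_s(x,t)$ at the "child" point $-1/x$ (or its fractional part), at the cost of a factor $|x|^{s-1/2}$ and a controlled tail $\Omega_s$. The key quantitative point is that one application of the functional equation replaces the sum over $n\le N$ by a sum over $n\le N|x|$ with a multiplicative gain in size governed by $|x|^{s-1/2}<1$ for $s>1/2$, so that after $k$ steps the surviving "head" of the series is morally of length $Q_k$-ish and carries weight $\prod_{j<k}|x_j|^{s-1/2}$.

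First I would make the link between the regular continued fraction of $x$ and the orbit under $x\mapsto\{-1/x\}$ (equivalently, relate to the even continued fraction machinery developed later in the paper, or just track denominators directly): the point is that the product of the "expansion factors" over $k$ steps telescopes to something comparable to $Q_k^{-1}$, and the length of the $k$-th head is comparable to $Q_{k+1}$. Concretely, the contribution of the block of indices appearing at generation $k$ is, up to absolute constants, $\sum_{n\le cQ_{k+1}} n^{-s}$ times the accumulated weight $\approx Q_k^{-(s-1/2)}\cdot$(stuff), and one checks $\sum_{n\le Q_{k+1}}n^{-s}\asymp Q_{k+1}^{1-s}$ when $s<1$ and $\asymp\log Q_{k+1}$ when $s=1$. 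Multiplying by the weight and summing over $k$ yields exactly the series in~\eqref{eq:cvQps1}, resp.~\eqref{eq:cvQps2}. The contributions of the $\Omega_s$ error terms at each generation must be summed separately; here one uses that $\Omega_s$ is continuous on $[-1,1]\setminus\{0\}$ with only an $x^{(s-1)/2}$ (resp. logarithmic) singularity at $0$, so $|\Omega_s(x_k)|$ is bounded by a constant unless $x_k$ is small, in which case it is absorbed by the same $Q_k^{-(s-1/2)}$-type weights — so the $\Omega$-terms are dominated by the main series and contribute no new condition.

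I would organize the argument as: (1) state the one-step estimate derived from Theorem~\ref{theo:1}, isolating the main term $|x|^{s-1/2}e^{\pm i\pi/4}F_{s}(\{-1/x\},\,\cdot\,)$-type piece, the "partial head" $\sum_{n\le N|x|}\cdots$ minus its limit, and the $\Omega_s$ remainder; (2) iterate $k$ times and collect the geometric/telescoping weight into powers of $Q_k,Q_{k+1}$; (3) bound the accumulated heads by $\sum_k Q_k^{-s/2}Q_{k+1}^{(1-s)/2}$ (resp. $\sum_k Q_k^{-1/2}\log Q_{k+1}$), using $\sum_{n\le M}n^{-s}\asymp M^{1-s}$ or $\log M$; (4) bound the accumulated $\Omega_s$ remainders and check they are $O$ of the same quantity; (5) conclude that under~\eqref{eq:cvQps1} or~\eqref{eq:cvQps2} the tail of $F_s(x,t)$ is summable in absolute value, hence $F_s(x,t)$ converges absolutely. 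The main obstacle I anticipate is step~(2): correctly bookkeeping the accumulated phase and, more importantly, the accumulated scaling factor through the $\{-1/x\}$-dynamics so that it matches the $Q_k$'s of the \emph{regular} continued fraction rather than of the even one — the relation between the two expansions, and the fact that the fractional-part map can produce a denominator jump that must be controlled by $Q_{k+1}/Q_k$, is the delicate point. Once the denominators are correctly matched, steps~(3)–(5) are routine estimates on sums of the form $\sum n^{-s}$.
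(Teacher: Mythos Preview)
Your approach is genuinely different from the paper's, and there is a concrete gap in your heuristic.

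The paper does \emph{not} iterate the $s>0$ functional equation to prove Theorem~\ref{theo:4}. Instead it uses Abel summation together with the Hardy--Littlewood bound on the unweighted theta sums (Proposition~\ref{prop:H-L}, which is the $s=0$ case):
\[
\Big|\sum_{k=1}^N e^{i\pi k^2 x+2i\pi kt}\Big|\ll \frac{N}{\sqrt{Q_r}}+\sqrt{Q_r}.
\]
After Abel summation, the sum over $k$ is split into blocks $[B_\ell,B_{\ell+1})$ with $B_m=\lceil\sqrt{Q_mQ_{m-1}}\,\rceil$, and in the $\ell$-th block one chooses $r=\ell$. The geometric-mean cutoff $\sqrt{Q_mQ_{m-1}}$ is exactly what balances the two terms $N/\sqrt{Q_\ell}$ and $\sqrt{Q_\ell}$, and it is the source of the halved exponents $(1-s)/2$ and $s/2$ in~\eqref{eq:cvQps1}. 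This is a short, static argument with no dynamics on the $t$-variable.

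Your heuristic, by contrast, gives the wrong exponents for $s\in(\tfrac12,1)$: you claim the generation-$k$ contribution is $\big(\sum_{n\le cQ_{k+1}}n^{-s}\big)\cdot Q_k^{-(s-1/2)}\asymp Q_{k+1}^{1-s}Q_k^{1/2-s}$, whereas the target~\eqref{eq:cvQps1} is $Q_{k+1}^{(1-s)/2}Q_k^{-s/2}$. Your expression exceeds the target by a factor $(Q_kQ_{k+1})^{(1-s)/2}$, so summing it gives a condition that fails for every irrational $x$ (it would force $\mu(x)<2$). The coincidence that the two expressions agree at $s=1$ masks this. The problem is the ``head of length $Q_{k+1}$'' picture: after $k$ iterations of the functional equation the surviving partial sum has length $\approx N/Q_k$, which depends on $N$ and is not $Q_{k+1}$; there is no block of indices of intrinsic length $Q_{k+1}$ appearing at generation $k$.

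What \emph{does} produce the correct exponents along your route is bounding the $\Omega_s$-terms directly: with Gauss's map $G$, the weight is $\prod_{j<k}|G^j(x)|^{s-1/2}\le Q_k^{1/2-s}$ and $|\Omega_s(G^k(x))|\ll |G^k(x)|^{(s-1)/2}\ll (Q_{k+1}/Q_k)^{(1-s)/2}$, whose product is exactly $Q_{k+1}^{(1-s)/2}Q_k^{-s/2}$. The paper acknowledges that this Gauss-map iteration can be pushed through to give~\eqref{eq:GtildeG} under~\eqref{eq:cvQps1}--\eqref{eq:cvQps2}, but warns it costs roughly ten extra pages because for general $t$ (and even $t=0$) one must track the auxiliary operator $\widetilde G$ on the second variable, and one must separately control the error terms $E_s$ from the approximate functional equation (your step~(4) glosses over this; for the $T$-map this is precisely what forces the stronger hypotheses of Corollary~\ref{coro:4}). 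So your programme is salvageable in principle with $G$ in place of $x\mapsto\{-1/x\}$, but the bookkeeping is substantially heavier than the paper's two-line Abel-summation argument, and your stated heuristic for the main term must be replaced by the $\Omega_s$ estimate above.
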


Conditions~\eqref{eq:cvQps1} and~\eqref{eq:cvQps2} hold 
for Lebesgue-almost all $x$.   It is possible to prove a quantitative version of Therorem~\ref{theo:4}. 
We need to introduce more notations.
We introduce the two transformations  
$$
G(x)= \left\{\frac1x \right\}, \qquad
\widetilde{G}(x,t)=\left\{\frac12\left[\frac 1x\right]-\frac tx \right\}.
$$
Then, for all $x$ satisfying at least the 
conditions~\eqref{eq:cvQps1} or \eqref{eq:cvQps2}, it can be 
proved~(\footnote{The details will not be given here because the process is similar to that leading to  Theorem~\ref{theo:2} and this 
would add nearly ten more pages to the paper.}) that   
\begin{equation}\label{eq:GtildeG}
F_s(x,t) = \sum_{j=0}^{\infty} 
e^{i\frac{\pi}8(1+(-1)^{j-1})+i\pi\sum\limits_{1\le \ell \le j}^{} (-1)^{\ell} \widehat{G}_{\ell}(x,t)}
(xG(x)\cdots G^{j-1}(x))^{s-\frac12} 
\Omega_{j,s}(G^j(x),\widetilde{G}_j(x,t)),
\end{equation}
where $\Omega_{j,s}(x,t)=\Omega_s(x,t)$ if $j$ is even, 
$\Omega_{j,s}(x,t)=\overline{\Omega_s(x,t)}$ if $j$ is odd, and 
\begin{align*}
\widetilde{G}_0(x,t) = x, \quad \widehat{G}_{0}(x,t)=t,& \quad 
\widetilde{G}_1(x,t) = \widetilde{G}(x,t),  
\quad \widehat{G}_{1}(x,t)=\frac{t^2}x
\\
\widetilde{G}_{j+1}(x,t) = \widetilde{G}_1(T^j(x),\widetilde{G}_j(x,t)),& \qquad 
\widehat{G}_{j+1}(x,t) = \widehat{G}_{1}(T^j(x),\widetilde{G}_j(x,t))\qquad (j\ge 0).
\end{align*}
Eq.~\eqref{eq:GtildeG} holds very generally, the right-hand side converges quickly and the appearence of Gauss' transform $G$ is a nice feature. But this is at the cost of the simultaneous appearance of the operator $\widetilde{G}$ and this makes~\eqref{eq:GtildeG} looks very complicated, even when $t=0$ because $\widetilde{G}_j(x,0)\neq 0$ in general.

\medskip

However, the underlying modular nature of $F_s(x,t)$ implies that the transformation of $[-1,1]\setminus\{0\}$ given by $$T(x)=-\frac1x \mod 2$$
is  more natural than Gauss' in this specific study, and in particular it leads to another expression (i.e,~\eqref{eq:TtildeT} below) for $F_s(x,t)$ which is formally similar to~\eqref{eq:GtildeG} but simpler. The comparison of both approaches is one of our motivations. 

Our next theorem below explains what we mean by ``the modular nature of $F_s(x,t)$'' and the subsequent theorems are devoted to convergence conditions of $F_s(x,t)$ (mainly when $t=0$) in terms of series defined by the operator $T$, as well as their relations with Theorem~\ref{theo:4}.

\begin{theo} \label{theo:1}

$(i)$ For any $x\in [-1,1]\setminus \{0\}$, $t\in \mathbb R$, $s\ge 0$, we have the estimate
\begin{multline}\label{eq:1}
F_{s,n}(x,t) - e^{\si(x) i\frac{\pi}4} e^{-i\pi \frac{\{ \sigma(x) t\}^2}x}
\vert x\vert^{s-\frac12} F_{s,\lfloor n \vert x\vert \rfloor}\Big(-\frac1x,\frac{\{\si(x) t\}}x\Big)\\
= \Omega_s(x,\si(x) t) +\mathcal{O}\left( \frac{1}{n^s \sqrt{x}}\right).
\end{multline}
 when $n$ tends to infinity. The implicit constant depends on $s$ and $t$, but not on $x$.
 
$(ii)$ When $0\le s\leq 1$, the function $\Omega_s(x)$ is 
continuous on $\mathbb R\setminus\{0\}$, differentiable at any rational 
number $p/q$ with $p,q$ both odd, 
and 
$$
\Omega_s(x)-\frac{\rho^{1-s}\Gamma(\frac{1-s}{2})}{2\pi^{\frac{1-s}{2}}}
\vert x \vert^{\frac{s-1}2} \quad ( 0 \le s < 1)
\quad \textup{and} \quad
\Omega_1(x)-\log (1/\sqrt{\vert x \vert})
$$ 
are bounded on $\mathbb R$.

$(iii)$ When $s>1$, the function $\Omega_s(x)$ is differentiable on $\R\setminus\{0\}$ and continuous at $0$.
\end{theo}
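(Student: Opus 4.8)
The plan is to obtain \eqref{eq:1} by a single contour-integral manipulation and then extract from the explicit integral formula for $\Omega_s$ all the regularity statements in parts $(ii)$ and $(iii)$. First I would start from the partial sum $F_{s,n}(x,t)$ and write $1/k^s = \frac{1}{\Gamma(s)}\int_0^\infty u^{s-1}e^{-ku}\,\dd u$ (or, in the spirit of the Hardy--Littlewood approximate functional equation, apply Poisson summation / a Mellin--Barnes representation directly). Summing the geometric-type series in $k$ produces an integrand with a factor $1/(1-e^{2i\pi z})$, whose poles at the integers $z=1,\dots,n$ are exactly the terms $k^{-s}e^{i\pi k^2 x + 2i\pi k t}$. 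The heart of the computation is to shift the contour from a line just to the right of these poles to the line $\tfrac12 - \rho\infty \to \tfrac12 + \rho\infty$ appearing in \eqref{eq:8}: the residues swept are precisely $F_{s,n}(x,t)$, the shifted contour integral over the "tail" poles $k>n$ reassembles — after the change of variable $z\mapsto -1/x$ and an application of the classical Gauss-sum evaluation $\sum e^{i\pi k^2/x}$ — into the second sum $\vert x\vert^{s-1/2}e^{\si(x)i\pi/4}F_{s,\lfloor n\vert x\vert\rfloor}(-1/x,\cdot)$ with the phase $e^{-i\pi\{\si(x)t\}^2/x}$, and the remaining integral along $\tfrac12+\rho\R$ plus the Gaussian-type correction term is by definition $\Omega_s(x,\si(x)t)$. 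The error $\mathcal O(n^{-s}/\sqrt x)$ comes from the endpoint/truncation discrepancy between $\lfloor n\vert x\vert\rfloor$ and $n\vert x\vert$ and from bounding the neglected arcs; uniformity in $x$ is ensured because all $x$-dependence sits in exponentials of modulus $\le 1$ and in the Gaussian, while the $s,t$-dependence is harmless. The case $x<0$ follows by complex conjugation, which is how $\Omega_s$ was defined.

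For part $(ii)$, I would read off continuity on $\R\setminus\{0\}$ directly from \eqref{eq:8}: the first integral is an absolutely convergent contour integral depending continuously (indeed analytically) on $x>0$ because $\vert e^{i\pi z^2 x}\vert = e^{-\pi x\,\mathrm{Im}(z^2)}$ decays on the ray once $x>0$; the second term is a uniformly convergent series of continuous functions of $x$, the Gaussian $e^{-\pi x u^2}$ controlling the $u$-integral and the bracket $(\rho xu+k)^{-s}-k^{-s} = O(1/k^{s+1})$ (for fixed $x$-compacta) controlling the sum. For the singularity at $x=0$: set $z = w/\sqrt x$ in the first integral of \eqref{eq:8} to factor out $x^{(s-1)/2}$ times an integral that tends to $\int_{1/2\cdot 0 - \rho\infty}^{+\rho\infty} e^{i\pi w^2} w^{-s}\,\dd w$, a Fresnel-type integral that evaluates to the stated constant $\frac{\rho^{1-s}\Gamma(\frac{1-s}{2})}{2\pi^{(1-s)/2}}$ (for $s=1$ the $\Gamma$-factor has a pole, which is exactly where the $\log(1/\sqrt{\vert x\vert})$ comes from — one isolates it by splitting $w^{-1}$ near $0$). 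What remains after subtracting this principal part, together with the second term of \eqref{eq:8}, must be shown to be bounded as $x\to 0^+$, uniformly; this is the place where one genuinely has to be careful, estimating the difference between the rescaled integral and its $x\to0$ limit and checking that the series term stays $O(1)$ — I expect this boundedness estimate to be the main obstacle, since it requires tracking cancellations rather than crude bounds. Differentiability at $p/q$ with $p,q$ both odd should follow from the same rescaling together with the classical fact that the associated Gauss sum $\sum_{k} e^{i\pi k^2 p/q}$ vanishes in that arithmetic case, so that the singular contributions cancel and one can differentiate the resulting smooth contour integral and series term by term.

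For part $(iii)$, when $s>1$ the series $\sum k^{-s}e^{i\pi k^2 x+2i\pi kt}$ converges absolutely and locally uniformly, so $F_s(x,t)$ is itself continuous, and comparing \eqref{eq:1} at $x$ near $0$ shows $\Omega_s(x)$ inherits continuity at $0$ (the prefactor $\vert x\vert^{s-1/2}$ kills the shifted term when $s>1/2$, and the rescaled Fresnel integral now converges at $w=0$ because $\mathrm{Re}(s)>1$ makes $w^{-s}$ still integrable after the contour is pushed off the origin — more precisely the constant $\Gamma(\frac{1-s}{2})$ is finite and one checks the limit is attained continuously). Differentiability on $\R\setminus\{0\}$ is again obtained by differentiating \eqref{eq:8} under the integral and summation signs, the extra factor of $z$ (resp. $u$) produced by $\partial_x$ being absorbed by the Gaussian decay and by the improved $O(k^{-s-1})$ decay of the bracketed difference; one must check the differentiated series and integrals still converge locally uniformly, which for $s>1$ is routine. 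Throughout, the case $x<0$ is handled by the conjugation $\Omega_s(x)=\overline{\Omega_s(-x)}$, which transfers all regularity statements without change.
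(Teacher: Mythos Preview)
Your proposal has the right outline but misidentifies the mechanism in two places, and in each case the gap is not cosmetic.

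\textbf{Part $(i)$: how the transformed sum actually appears.} You describe the second sum $\vert x\vert^{s-1/2}F_{s,\lfloor n\vert x\vert\rfloor}(-1/x,\cdot)$ as arising from ``the shifted contour integral over the tail poles $k>n$'' together with a Gauss-sum evaluation and a change of variable ``$z\mapsto -1/x$''. This is not how it works in Mordell's method, which is what the paper uses. After writing $F_{s,n-1}(x,t)$ as the difference of two integrals along the tilted line $-\tfrac12+\rho\mathbb R$ (one with $g_s(z+1)$, one with $g_s(z+n)$), the integral involving $g_s(z+n)$ is \emph{entirely} the error term; nothing from it contributes to the transformed sum. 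The transformed sum comes from the $g_s(z+1)$ integral, and the mechanism is a finite geometric expansion
\[
\frac{1}{1-e^{2i\pi z}}=\sum_{k=0}^{\lambda-1}e^{2i\pi kz}+\frac{e^{2i\pi\lambda z}}{1-e^{2i\pi z}},
\qquad \lambda=\big\lfloor (n-\tfrac12)x+t\big\rfloor-\lfloor t\rfloor,
\]
after which each piece $\int e^{i\pi z^2x+2i\pi(k+\xi)z}z^{-s}\,\dd z$ is a pure Gaussian integral in $z$ along the tilted line. Completing the square produces the phase $e^{-i\pi(k+\xi)^2/x}$ and, after extracting the leading constant, the $k$-th term of $F_{s,\lambda}(-1/x,\{t\}/x)$; the correction to that extraction is exactly the second integral in the definition \eqref{eq:8} of $\Omega_s$. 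No classical Gauss-sum identity enters, and there is no change of variable of the form $z\mapsto -1/x$. The choice of $\lambda$ is forced so that the leftover integral $\int\frac{e^{i\pi z^2x+2i\pi z\{t\}}}{z^s(1-e^{2i\pi z})}\dd z$ is independent of $n$; this is $U_s(x,t)$.

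\textbf{Part $(ii)$: differentiability at $p/q$ with $p,q$ odd.} Your argument is that the Gauss sum $\sum_k e^{i\pi k^2 p/q}$ vanishes in this arithmetic case, so singular contributions cancel and one can differentiate termwise. That is the heuristic, but it is not a proof, and the paper does not argue this way. What the paper does is integrate $W_s(x)$ by parts in $u$ and expand the resulting integrand to isolate a term of the form
\[
c_s\,x^{s+1/2}\sum_{k=1}^\infty \frac{e^{-i\pi k^2/x}}{k^{s+2}},
\]
up to a factor that is manifestly differentiable for all $x>0$. The differentiability of this remaining series at $x=p/q$ with $p,q$ odd is then \emph{quoted} from Luther's theorem, which asserts that $\sum_k e^{i\pi k^2 x}/k^{\xi}$ is differentiable at such rationals for every $\xi>3/2$ (here $\xi=s+2\ge 2$). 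You should be aware that this step is not soft: the Gauss-sum vanishing alone does not immediately give differentiability of the series, and the paper outsources this to a nontrivial external result.

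The remaining pieces of your sketch --- continuity on $\mathbb R\setminus\{0\}$ from uniform convergence, the rescaling $z=\tfrac12+\rho v/\sqrt{x}$ to extract the $x^{(s-1)/2}$ (resp.\ $\log(1/\sqrt{x})$) singularity from $U_s$, the dominated-convergence argument for continuity at $0$ when $s>1$, and the conjugation for $x<0$ --- are all in line with the paper's proof.
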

\begin{Remark} The function  $\Omega_0(x)$  
is the same as the one used by Cellarosi~\cite{CELLA1} and 
Fedotov-Klopp~\cite{FEDKLOPP}. When $s>1$,  the proof of item $(ii)$ yields only that $\Omega_s$ is bounded around $0$. 

See Figures~\ref{fig3} and~\ref{fig4} for an illustration of Theorem~\ref{theo:1}.
\end{Remark}

As $n\to +\infty$, the left hand side of~\eqref{eq:1} tends 
to $\Omega_s(x,t)$ when $x>0$. The resulting ``modular'' equation
\begin{equation}\label{eq:2}
F_s(x,t)-e^{i\frac{\pi}4} e^{-i\pi \frac{t^2}x}
x^{s-\frac12}\,F_s\Big(-\frac 1x,\frac{t}x \Big)
= \Omega_s(x,t)
\end{equation}
holds a priori at least {\em almost everywhere}  for $x\in (0,1)$ for 
any fixed $s\geq 0 $ and $t\in [0,1)$, 
and Theorem~\ref{theo:1} shows in which sense 
we can say it holds {\em everywhere}. For other examples of this 
phenomenon, see~\cite{conrey, rivroq} for instance. Of course, if $s>1$,~\eqref{eq:2} 
holds for all $x$.

\begin{figure} 
\centering
\epsfig{figure=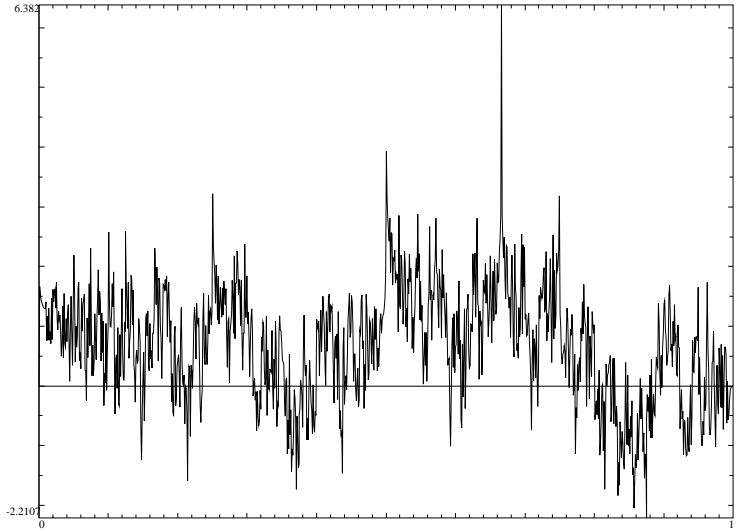,angle=-90, width=0.6\textwidth}
\caption{Plot of $\textup{Im}(F_{0.7,100}(x))$ on $[0,2]$}
\label{fig3} 
\end{figure}

\begin{figure} 
\centering
\epsfig{figure=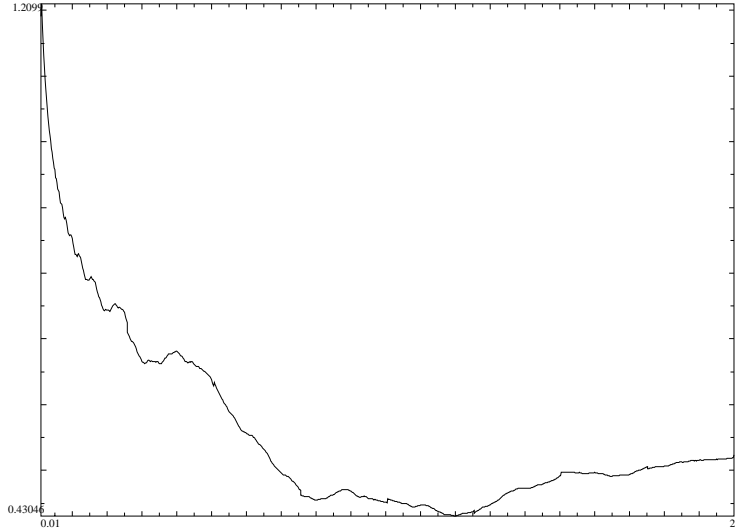,angle=-90, width=0.6\textwidth}
\caption{Plot of $\textup{Im}(F_{0.7,1000}(x)-e^{i\pi/4}x^{0.2}F_{0.7,\lfloor 1000x\rfloor}(-1/x))$ on $[0,2]$}
\label{fig4} 
\end{figure}

In fact, we will obtain a more precise estimate for the error term in \eqref{eq:1}, 
uniform in $x \in[-1,1]\setminus\{0\}$, $n\ge 0$ and $t\in \mathbb R$: 
\begin{multline}\label{eq:1bis}
\mathcal{O}\left( \frac{1}{n^s \sqrt{x}}\right)  = 
\mathcal{O}\left(\frac{\vert x\vert^{s-\frac12}}{\big(\lfloor n\vert x\vert+\si(x) 
t\rfloor+1- \si(x) t\big)^s}\right)
\\
+ 
\begin{cases}
\mathcal{O} \left( \min\Big(\frac{1}{(n+1)^s\sqrt{\vert x\vert}}, 
\frac{1}{\vert x\vert^{\frac{1-s}{2}}}\Big) \right) \quad (s\neq 1)
\\
\mathcal{O} \left( \min\Big(\frac{1}{(n+1)\sqrt{\vert x\vert}}, 1+\big\vert \log\big((n+1) 
\sqrt{\vert x\vert}\big)\big\vert\Big)\right) \quad (s=1),
\end{cases}
\end{multline}
where the constants in the $\mathcal{O}$ on the right-hand side 
depend now at most on $s$ and are effective. 
We need this refinement to prove Theorem~\ref{theo:2} below.
  
Theorem~\ref{theo:1} will be used to get informations of the convergence of $F_s(x)$ in terms 
of the diophantine properties of $x$. In the sequel $T^m(x)$ 
denotes the $m$-th iterate of $x$ by $T$. By 
$2$-periodicity of $T$, Eq.~\eqref{eq:1} can be rewritten as follows (when $t=0$): for any 
$x\in [-1,1]\setminus \{0\}$, $s>\frac12$ and integer $n\ge 0$, 
\begin{multline}\label{eq:11}
F_{s,n}(x)=e^{\si(x)i\frac{\pi}4} 
\vert x\vert^{s-\frac12}F_{s,\lfloor n \vert x\vert \rfloor} \big(T(x)\big)
+\Omega_s(x)
\\
+ \begin{cases}
\mathcal{O} \left( \min\Big(\frac{1}{(n+1)^s\sqrt{\vert x\vert}}, 
\, \frac{1}{\vert x\vert^{\frac{1-s}{2}}}\Big) \right) \qquad (s\neq 1)
\\
\mathcal{O} \left( \min\Big(\frac{1}{(n+1)\sqrt{\vert x\vert}}, 
\, 1+\big\vert \log\big((n+1) \sqrt{\vert x\vert}\big)\big\vert\Big)\right) \qquad (s=1).
\end{cases}
\end{multline}
(When $t=0$, 
the error term $\vert x\vert^{s-\frac12}\cdot \big(\lfloor n\vert x\vert+\si(x) 
t\rfloor+1- \si(x) t\big)^{-s}$ in~\eqref{eq:1bis} is absorbed by the error term in~\eqref{eq:11}, 
for some constant that depends only on $s$;  this is enough for the application we have in mind.) 
The second sum $F_{s,\lfloor n \vert x\vert \rfloor} \big(T(x)\big)$  in~\eqref{eq:11} involves less terms than the first one for any irrational number 
in $(-1,1)$, because $\lfloor n \vert x\vert \rfloor<n$. Hence for any fixed $n$ and $x$, 
we can iterate~\eqref{eq:11} because $T(x)\in (-1,1)$. 
After a finite number of steps (say $\ell$, which depends on $x$), we get an empty 
sum $F_{s,0}(T^\ell(x))=0$ 
on the right hand side together with a finite sum  
defined in terms of iterates of $\Omega_s(x)$ 
and a quantity we expect to be an error term (i.e., that tends 
to $0$ as $n$ tends to infinity under suitable condition on~$x$).

We prove the following result.
\begin{theo}\label{theo:2} 
Let $x\in (-1,1)$ be an irrational number. 
\\
\indent $(i)$
 If $s\in(\frac12 ,1)$ and if 
\begin{equation}\label{eq:riv10}
\sum_{j=0}^{\infty} 
\frac{\vert xT(x)\cdots T^{j-1}(x)\vert^{s- \frac12}}
{\vert T^j(x)\vert^{\frac{1-s}{2}}}<\infty,
\end{equation}
then $F_s(x)$ is also convergent and the following identity holds:
\begin{equation}\label{eq:riv11}
F_s(x)=\sum_{j=0}^{\infty}e^{i\frac{\pi}4\sum\limits_{\ell=0}^{j-1}\si(T^\ell x)}
\vert xT(x)\cdots T^{j-1}(x)\vert^{s-\frac12} \,\Omega_s\big(T^j(x)\big).
\end{equation}

$(ii)$ If 
\begin{eqnarray}\label{eq:33}
&&\sum_{j=0}^{\infty} \sqrt{\vert xT(x)\cdots T^{j-1}(x)\vert}<\infty\\
\mbox{ and }&& \label{eq:3}
\sum_{j=0}^{\infty} \sqrt{\vert xT(x)\cdots T^{j-1}(x)\vert} 
\,\log \Big(\frac1{\vert T^j x\vert}\Big)<\infty, 
\end{eqnarray}
then $F_1(x)$ is also convergent and the following identity holds:
\begin{equation}\label{eq:7}
F_1(x)=\sum_{j=0}^{\infty}e^{i\frac{\pi}4\sum\limits_{\ell=0}^{j-1}\si(T^\ell x)}
\sqrt{\vert xT(x)\cdots T^{j-1}(x)\vert} \,\Omega_1\big(T^j(x)\big).
\end{equation}
\end{theo}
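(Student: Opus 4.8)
The plan is to iterate the renormalization identity \eqref{eq:11}, keeping track separately of the ``main'' terms built from $\Omega_s$ and of the accumulated error terms, and then to let the truncation index go to infinity. Fix an irrational $x\in(-1,1)$ and an integer $n\ge 1$, put $n_0=n$ and $n_{j+1}=\lfloor n_j\vert T^j(x)\vert\rfloor$. Since $x$ is irrational we have $\vert T^j(x)\vert\in(0,1)$ for every $j$, hence $n_{j+1}<n_j$ as long as $n_j\ge 1$, so there is a first index $\ell=\ell(n,x)\le n$ with $n_\ell=0$. Applying \eqref{eq:11} at the point $T^j(x)$ with partial-sum length $n_j$, for $j=0,\dots,\ell-1$, and telescoping (using $F_{s,0}\equiv 0$) gives the exact finite identity
\begin{multline*}
F_{s,n}(x)=\sum_{j=0}^{\ell-1}e^{i\frac{\pi}4\sum_{m=0}^{j-1}\si(T^m x)}\vert xT(x)\cdots T^{j-1}(x)\vert^{s-\frac12}\,\Omega_s\big(T^j(x)\big)\\
+\sum_{j=0}^{\ell-1}e^{i\frac{\pi}4\sum_{m=0}^{j-1}\si(T^m x)}\vert xT(x)\cdots T^{j-1}(x)\vert^{s-\frac12}\,E_j,
\end{multline*}
where $E_j$ is the error term of \eqref{eq:11} at step $j$.

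Next I would record two size bounds. From Theorem~\ref{theo:1}$(ii)$, for $\vert y\vert\le 1$ one has $\vert\Omega_s(y)\vert\ll \vert y\vert^{(s-1)/2}$ when $s<1$ and $\vert\Omega_1(y)\vert\ll 1+\log(1/\vert y\vert)$; from the explicit error in \eqref{eq:11} (i.e.\ \eqref{eq:1bis} with $t=0$),
\[
\vert E_j\vert\ll\min\!\Big(\tfrac{1}{(n_j+1)^s\sqrt{\vert T^j(x)\vert}},\ \tfrac{1}{\vert T^j(x)\vert^{(1-s)/2}}\Big)\qquad(s\ne1),
\]
and analogously with the logarithm when $s=1$. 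Using the \emph{second} (the $n$-independent) branch of the $\min$, together with the bound on $\Omega_s$, each summand of the finite identity above is dominated by the corresponding term of the series \eqref{eq:riv10} (resp.\ by a combination of \eqref{eq:33} and \eqref{eq:3}). In particular, under the hypothesis of part $(i)$ (resp.\ part $(ii)$) the series
\[
S:=\sum_{j=0}^{\infty}e^{i\frac{\pi}4\sum_{m=0}^{j-1}\si(T^m x)}\vert xT(x)\cdots T^{j-1}(x)\vert^{s-\frac12}\,\Omega_s\big(T^j(x)\big)
\]
converges absolutely, and for $J\to\infty$ the tails $\sum_{j\ge J}(\cdots)$ of all the relevant series tend to $0$ \emph{uniformly in $n$}.

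It then remains to estimate $F_{s,n}(x)-S$ by a two-speed (head/tail) argument. Fix $J$. For the tail $J\le j<\ell$, both the $\Omega_s$-part and the $E_j$-part of the finite identity are bounded, termwise and uniformly in $n$, by $\varepsilon_J:=\sum_{j\ge J}(\cdots)$. For the head $0\le j<J$, an easy induction gives $n_j\ge n\,\vert xT(x)\cdots T^{j-1}(x)\vert-j$, which tends to $\infty$ with $n$ because $x$ is irrational; hence for each fixed $j$ the \emph{first} branch of the bound on $E_j$ shows $E_j\to 0$, so $\sum_{j=0}^{J-1}(\cdots)E_j\to 0$ as $n\to\infty$. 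Finally $\ell(n,x)\to\infty$, so the partial sum $\sum_{j=0}^{\ell-1}(\cdots)\Omega_s(T^j(x))$ converges to $S$. Combining the three pieces, $\limsup_{n\to\infty}\vert F_{s,n}(x)-S\vert\le\varepsilon_J$ for every $J$, whence $F_{s,n}(x)\to S$; this establishes convergence of $F_s(x)$ together with the identities \eqref{eq:riv11} and \eqref{eq:7}.

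The main obstacle is exactly that the errors $E_j$ depend on $n$ through $n_j$, so one cannot simply pass to the limit term by term. What makes the argument work is the $\min$ in \eqref{eq:1bis}: it provides two complementary estimates for $E_j$ — one independent of $n$ that controls the tail, one decaying in $n_j$ that controls any fixed head — which is precisely why the refined error term \eqref{eq:1bis} is needed here. Everything else (the telescoping of \eqref{eq:11}, the elementary monotonicity and lower bounds for $(n_j)$, and the bounds on $\Omega_s$ coming from Theorem~\ref{theo:1}) is routine.
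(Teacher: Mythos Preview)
Your argument for part $(i)$ is correct and coincides with the paper's: the ``head/tail'' splitting you describe is exactly Tannery's theorem, applied with the uniform majorant $|E_j|\ll |T^j(x)|^{-(1-s)/2}$ coming from the second branch of the $\min$ in \eqref{eq:11}, which together with the bound $|\Omega_s(y)|\ll |y|^{(s-1)/2}$ gives termwise domination by the convergent series \eqref{eq:riv10}.

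For part $(ii)$ there is a genuine gap. When $s=1$ the second branch of the error in \eqref{eq:11} is
\[
1+\bigl|\log\bigl((n_j+1)\sqrt{|T^j(x)|}\,\bigr)\bigr|,
\]
which \emph{does} depend on $n$ through $n_j$; for fixed $j$ it even diverges as $n_j\to\infty$. So your claim that ``the second (the $n$-independent) branch of the $\min$'' dominates the tail uniformly in $n$ is false when $s=1$, and the domination by a combination of \eqref{eq:33} and \eqref{eq:3} is not justified. This is precisely why the paper's proof of part $(ii)$ is more delicate than part $(i)$.

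What the paper does instead is a three-way split of the error sum. For $0\le j<H:=\min(L,J)$ with $J$ the largest integer satisfying $Jq_J\le n/4$, one has the quantitative lower bound $n_j\ge \tfrac{n}{2}|xT(x)\cdots T^{j-1}(x)|$, and the \emph{first} branch yields a total contribution $\ll n^{-1/2}$. For $H\le j\le L-1$ one uses that $n_{j+1}\ge 1$ forces $n_j\ge 1/|T^j(x)|$, so the \emph{first} branch gives $|E_j|\ll\sqrt{|T^j(x)|}$ and the contribution is controlled by a tail of \eqref{eq:33}. Only for the single boundary term $j=L$ is the second branch used: there $n_L\ge 1$ and $n_L|T^L(x)|<1$ pin $(n_L+1)\sqrt{|T^L(x)|}$ between $2\sqrt{|T^L(x)|}$ and $2/\sqrt{|T^L(x)|}$, whence $|E_L|\ll 1+\log(1/|T^L(x)|)$, which is a single term of \eqref{eq:33}$+$\eqref{eq:3} and tends to $0$ as $L\to\infty$. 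Your head/tail scheme can be repaired along exactly these lines, but it requires distinguishing the range $j<\ell-1$ (where the first branch, not the second, must be used with the inequality $n_j\ge 1/|T^j(x)|$) from the endpoint $j=\ell-1$; as written, the proposal does not do this.
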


The series in~\eqref{eq:riv10},~\eqref{eq:33} and~\eqref{eq:3} do not converge 
everywhere. 
It was an open question whether such series converge 
Lebesgue-almost everywhere. Note that the results in the cited papers of 
Cellarosi~\cite{CELLA1}, Kraaikamp-Lopes~\cite{K-L},  Schweiger~\cite{S1, S2}, and Sinai~\cite{Sinai} give estimates for 
the average behavior of $\vert xT(x)\cdots T^{j-1}(x)\vert$ 
when $j$ tends to infinity, but these estimates are not 
sharp enough~(\footnote{It is known that 
$T$ is ergodic with respect to a measure $\nu$ supported on $[-1,1]$ but, in 
contrast with the ergodic theory of 
Gauss' transformation $G$, the measure $\nu$   is infinite. 
As a consequence, the analogue of Birkoff's ergodic theorem is not known and one must 
content with  ``convergence in probability'' results (see~\cite{CELLA1}), which are 
not well suited to our study.}) to guarantee the almost everywhere convergence. 
It would be interesting 
to know if $F_1(x)$ converges under the assumption of convergence of~\eqref{eq:3} only.

\subsection{More results around Theorem~\ref{theo:2}}

We now precise the diophantine content of Theorem~\ref{theo:2}.

\begin{theo}\label{theo:3} Let $\alpha>0$, $\beta \geq 0$, 
and set $\beta_\alpha=\displaystyle \frac{\sqrt{\alpha^2+4}-1}{2}$. 

$(i)$   If  $0\leq \beta<\beta_\alpha$, then the series 
\begin{equation}
\label{eq:seuret444}
\sum_{j=0}^{\infty} \frac
 {\vert xT(x)\cdots T^{j-1}(x)\vert ^\alpha }{|T^j(x)|^\beta}
\end{equation}
converges if
\begin{equation}
\label{condition1bis} \sum_{n=1}^{\infty}      
\frac{Q_{n+1}^{\beta+1 }}{ Q_{n }^{\alpha+\beta+1}}<\infty.
\end{equation}

$(ii)$  If  $\beta>\beta_\alpha$, then the series~\eqref{eq:seuret444} converges if
\begin{equation}
\label{condition1bisbis} \sum_{n=1}^{\infty}   
\frac{  Q_{n+2}^{\beta}} { Q_{n}^{\alpha+\beta}} 
<\infty
\end{equation}

$(iii)$  If $\beta=\beta_\alpha$, then~\eqref{condition1bis} 
and~\eqref{condition1bisbis} impose the convergence of~\eqref{eq:seuret444}.

$(iv)$ If $\alpha\geq 0$ and 
\begin{equation}
\label{condlog}
\sum_{n=1}^{\infty} \left(\frac{\log(Q_{n+1})}{Q_{n-1}^\alpha} + 
\frac{ Q_{n+1} \log(Q_{n+1})^2}{Q_n^{1+\alpha}} \right)<\infty,
\end{equation}
then the series 
$$
\sum_{j=0}^{\infty}
 \vert xT(x)\cdots T^{j-1}(x)\vert^\alpha \log \left(\frac{1}{T^j(x)} \right)
$$
converges.
 \end{theo}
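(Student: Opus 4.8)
The goal is to bound the three series in Theorem~\ref{theo:3} by translating the products $|xT(x)\cdots T^{j-1}(x)|$ and the factors $|T^j(x)|^{-\beta}$ into quantities involving the regular continued fraction denominators $Q_n$. The first step is the dictionary between the even continued fraction (driven by $T(x)=-1/x \bmod 2$) and the regular one. Writing $x=[0;a_1,a_2,\dots]$ with convergents $P_n/Q_n$, each step of $T$ either consumes one partial quotient of the regular expansion (when the nearest-integer rounding agrees with the floor) or, in the ``bad'' case, corresponds to a partial quotient equal to $1$ being absorbed; in all cases one has telescoping estimates of the shape
\begin{equation}\label{eq:plan1}
|xT(x)\cdots T^{m-1}(x)| \asymp \frac{1}{Q_{n(m)}}, \qquad |T^m(x)| \asymp \frac{1}{Q_{n(m)+1}}
\end{equation}
for the index $n(m)$ of the regular convergent reached after $m$ steps of $T$, up to bounded multiplicative constants and with $n(m)$ either increasing by $1$ or by $2$ at each step depending on the parity/rounding behavior. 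This is the standard correspondence (it is exactly the content of the Kraaikamp--Lopes and Schweiger references cited in the paper), and I would state it as a preliminary lemma and then simply invoke it.

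**Main estimate.** Granting \eqref{eq:plan1}, the general term of \eqref{eq:seuret444} is, up to constants, of the form $Q_{n}^{-\alpha} Q_{n+1}^{\beta}$ or $Q_{n+1}^{-\alpha}Q_{n+2}^{\beta}$ according to whether the step from $m$ to $m+1$ increments $n$ by $1$ or by $2$. The crux is that the product $xT(x)\cdots T^{m-1}(x)$ decays, so between consecutive ``landing indices'' $n$ one has control $Q_{n+1}\le a_{n+1} Q_n + Q_{n-1}$, and the contribution of a block of $T$-steps sitting inside a single regular step is geometric. The key inequality to extract is: for each regular index $n$, the total contribution of all $m$ with $n(m)=n$ is
\begin{equation}\label{eq:plan2}
\ll \frac{Q_{n+1}^{\beta+1}}{Q_n^{\alpha+\beta+1}} \quad (\beta<\beta_\alpha), \qquad \ll \frac{Q_{n+2}^{\beta}}{Q_n^{\alpha+\beta}} \quad (\beta>\beta_\alpha),
\end{equation}
with both bounds active at $\beta=\beta_\alpha$. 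The threshold $\beta_\alpha=\tfrac{\sqrt{\alpha^2+4}-1}{2}$, characterized by $\beta_\alpha^2+\beta_\alpha=\alpha$, is precisely the value at which the two competing exponents — one coming from summing a geometric series in the number of $T$-steps within a regular block (worst case a long string of partial quotients $1$, giving a golden-ratio growth), the other from a single large partial quotient — balance. So I would split into the two regimes, and in each regime do a one-variable optimization over the length of the sub-block; summing \eqref{eq:plan2} over $n$ then yields \eqref{condition1bis}, \eqref{condition1bisbis}, and the $\beta=\beta_\alpha$ case of parts $(i)$--$(iii)$.

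**The logarithmic variant, part $(iv)$.** Here the same dictionary applies but the extra factor is $\log(1/|T^m(x)|)\asymp \log Q_{n(m)+1}$ by \eqref{eq:plan1}. Now $\alpha$ plays the role of the previous $\alpha$ with $\beta=0$, which sits below $\beta_\alpha$ for every $\alpha>0$, but the logarithm forces a slightly more careful accounting of the block contributions: a block of $T$-steps through partial quotients all equal to $1$ has length $\asymp \log(Q_{n+1}/Q_n)\le \log Q_{n+1}$, each step contributing a $\log Q_{n+1}$-sized factor times a geometrically decaying weight, which produces the $Q_{n+1}\log(Q_{n+1})^2 / Q_n^{1+\alpha}$ term; the ``clean'' single-step contribution produces the $\log(Q_{n+1})/Q_{n-1}^\alpha$ term (the shift to $Q_{n-1}$ absorbing the possible index jump by $2$). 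Summing over $n$ gives exactly \eqref{condlog}.

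**Expected obstacle.** The routine part is the geometric summation once the framework is set up; the delicate part is the combinatorics of the even-to-regular dictionary — keeping uniform control of the constants in \eqref{eq:plan1} across the two types of $T$-steps, and correctly identifying which index ($Q_{n-1}$, $Q_n$, $Q_{n+1}$, or $Q_{n+2}$) governs a given block so that the final sums match the stated conditions without loss. Getting the exponent bookkeeping exactly right at the threshold $\beta=\beta_\alpha$, where both bounds in \eqref{eq:plan2} must be shown simultaneously necessary, is where I would be most careful.
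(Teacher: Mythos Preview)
Your dictionary \eqref{eq:plan1} is the heart of the plan, and it is false as stated. During the ``block'' phases of the even continued fraction --- precisely the mediant runs described in Proposition~\ref{prop:ecf2}$(iii)$, where $p_{j+m}/q_{j+m}=(mP_{n+1}+P_n)/(mQ_{n+1}+Q_n)$ for $m=1,\dots,A_{n+2}$ --- the iterates $|T^j(x)|$ are \emph{close to $1$}, not comparable to $1/Q_{n+1}$. Correspondingly, your claim that $n(m)$ increases by $1$ or $2$ at each $T$-step is backwards: a single regular index $n$ can host $\sim A_{n+2}$ consecutive $T$-steps, and it is exactly this accumulation that produces the term $\sum_m (m+1)^\beta/Q_n^\alpha \asymp A_{n+1}^{\beta+1}/Q_n^\alpha$ leading to \eqref{condition1bis}. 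Your characterization $\beta_\alpha^2+\beta_\alpha=\alpha$ is also incorrect (check: with the paper's $\beta_\alpha=(\sqrt{\alpha^2+4}-1)/2$ one gets $\beta_\alpha(\beta_\alpha+1)=(\alpha^2+3)/4$); the actual threshold comes from comparing the exponents in the two competing series $\sum Q_{n+2}^\beta/Q_n^{\alpha+\beta}$ and $\sum Q_{n+1}^{\beta+1}/Q_n^{\alpha+\beta+1}$ via the algebraic identity for $\delta$ in the proof. In part $(iv)$, the block length is $\sim A_{n+1}\asymp Q_{n+1}/Q_n$, not $\log(Q_{n+1}/Q_n)$.

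The paper's route avoids estimating $|T^j(x)|$ directly. It first rewrites the general term as $|xT\cdots T^{j-1}(x)|^{\alpha+\beta}/|xT\cdots T^j(x)|^\beta$ and then uses the \emph{ECF} denominators $q_j$ via the two-sided bound \eqref{majecf}, namely $|xT\cdots T^{j-1}(x)|\le 1/(q_j-q_{j-1})$ and $|xT\cdots T^j(x)|\ge 1/(2q_{j+1})$. Only after this does one translate $q_j$ into $Q_n$ by the explicit trichotomy of Proposition~\ref{prop:ecf2}: $q_j=Q_n$ with $q_{j-1}=Q_{n-1}$; $q_j=Q_n$ with $q_{j-1}=Q_n-Q_{n-1}$; or $q_j=mQ_n+Q_{n-1}$ a mediant. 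Each case gives a clean bound in terms of $Q_n,Q_{n+1},Q_{n+2}$, and summing produces the two series whose comparison yields the $\beta_\alpha$ threshold. You should rebuild your argument on the pair $(q_j-q_{j-1},\,q_{j+1})$ rather than on a pointwise estimate of $|T^j(x)|$.
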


The condition \eqref{condition1bis} can be simplified according to the 
values of $\alpha$ and $\beta$, in terms of the irrationality exponent 
$\mu(x)$ of an irrational  $x\in \R$, defined as  
$$
\mu(x)=\sup\left\{\mu \geq 1: \  \left|x-\frac{p}{q}\right|<\frac{1}{q^{\mu}} 
\ \mbox{ for infinitely many integers $q\geq 1$} \right\}. 
$$
It is well-known that $\mu(x)=2$ for almost every real numbers $x$.
When $s=1$,  choosing $\alpha=\frac12$, $\beta=0$, one sees that~\eqref{condlog} 
implies~\eqref{condition1bis}.   
When $s\in (\frac12,1)$, putting  $\alpha=s-\frac12$ and $\beta=\frac{1-s}2$, a simple 
computation shows  that   $\beta<\beta_\alpha$  in Theorem~\ref{theo:3}.

\begin{coro}\label{coro:4} 
$(i)$ 
If $1/2<s<1$ and 
\begin{equation}\label{eq:riv2806}
\sum_{n=1}^{\infty} \frac{Q_{n+1}^{\frac{3-s}{2}}}{Q_{n }^{1+\frac s2}}
\end{equation}
is convergent,  
then the identity~\eqref{eq:riv11}  holds true. The series~\eqref{eq:riv2806} converges 
for every $x$ such that $\mu(x)  <1+\frac{2+s}{3-s}$.

$(ii)$  If  
\begin{equation}\label{eq:riv2807}
\sum_{n=1}^{\infty} \left( \frac{\log(Q_{n+1})}{\sqrt{Q_{n-1}}} 
+ \frac{ Q_{n+1} \log(Q_{n+1})^2}{Q_n^{3/2}}\right)
\end{equation}
converges, then the equality \eqref{eq:7} holds true. The series~\eqref{eq:riv2807} converges for 
every $x$ such that $\mu(x)  < \frac52$.
\end{coro}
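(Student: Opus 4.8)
The plan is to obtain both items by specializing Theorems~\ref{theo:2} and~\ref{theo:3} with the appropriate exponents $(\alpha,\beta)$, and then translating the resulting arithmetic conditions into bounds on the irrationality exponent $\mu(x)$, using throughout the classical identity
$$\mu(x)=1+\limsup_{n\to\infty}\frac{\log Q_{n+1}}{\log Q_n}$$
together with the fact that $Q_n$ grows at least geometrically in $n$.

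\emph{Item $(i)$.} I would start from Theorem~\ref{theo:2}$(i)$: it is enough to check~\eqref{eq:riv10}, which is exactly the series~\eqref{eq:seuret444} with $\alpha=s-1/2>0$ and $\beta=(1-s)/2\ge 0$. As already noted before the statement of Corollary~\ref{coro:4}, an elementary computation gives $\beta<\beta_\alpha$ for every $s\in(1/2,1)$, so Theorem~\ref{theo:3}$(i)$ applies and reduces the convergence of~\eqref{eq:seuret444} to that of~\eqref{condition1bis}; substituting $\alpha=s-1/2$, $\beta=(1-s)/2$ turns~\eqref{condition1bis} into~\eqref{eq:riv2806}. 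Hence convergence of~\eqref{eq:riv2806} gives, via Theorem~\ref{theo:2}$(i)$, both the convergence of $F_s(x)$ and the identity~\eqref{eq:riv11}. For the Diophantine statement, if $\mu(x)<1+\frac{2+s}{3-s}$ one fixes $\nu$ with $\mu(x)-1<\nu<\frac{2+s}{3-s}$, so that $\log Q_{n+1}\le\nu\log Q_n$ for all large $n$; then the general term of~\eqref{eq:riv2806} is at most $Q_n^{\,\nu(3-s)/2-(1+s/2)}=Q_n^{-\delta}$ with $\delta:=(1+s/2)-\nu(3-s)/2>0$, and summability follows from the geometric growth of $Q_n$.

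\emph{Item $(ii)$.} Theorem~\ref{theo:2}$(ii)$ requires the two conditions~\eqref{eq:33} and~\eqref{eq:3}. The series in~\eqref{eq:3} is the one in Theorem~\ref{theo:3}$(iv)$ for $\alpha=1/2$, and for this value the hypothesis~\eqref{condlog} is precisely~\eqref{eq:riv2807}. The series in~\eqref{eq:33} is~\eqref{eq:seuret444} with $\alpha=1/2>0$ and $\beta=0$; since $0<\beta_{1/2}$, Theorem~\ref{theo:3}$(i)$ reduces it to the convergence of $\sum_n Q_{n+1}/Q_n^{3/2}$, which is dominated, for $n$ large, by the second summand of~\eqref{eq:riv2807} because $(\log Q_{n+1})^2$ is eventually bounded below. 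So convergence of~\eqref{eq:riv2807} yields both~\eqref{eq:33} and~\eqref{eq:3}, whence by Theorem~\ref{theo:2}$(ii)$ the series $F_1(x)$ converges and~\eqref{eq:7} holds. For the Diophantine statement, $\mu(x)<5/2$ gives $\log Q_{n+1}\le\nu\log Q_n$ for large $n$ with a fixed $\nu<3/2$; for the second summand of~\eqref{eq:riv2807} this yields $\frac{Q_{n+1}(\log Q_{n+1})^2}{Q_n^{3/2}}\le\nu^2\,\frac{(\log Q_n)^2}{Q_n^{3/2-\nu}}$, summable since $(\log t)^2/t^{3/2-\nu}$ is eventually decreasing and $Q_n$ grows geometrically; for the first summand one iterates the bound once more, $\log Q_{n+1}\le\nu^2\log Q_{n-1}$, so that $\frac{\log Q_{n+1}}{\sqrt{Q_{n-1}}}\le\nu^2\,\frac{\log Q_{n-1}}{\sqrt{Q_{n-1}}}$, again summable by the monotonicity of $\log t/\sqrt t$ and the geometric growth of $Q_n$.

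\emph{Main obstacle.} Granting Theorems~\ref{theo:2} and~\ref{theo:3}, the argument is mostly bookkeeping together with the standard $\mu(x)$--continued fraction dictionary; the one point requiring a little care is the estimate for $\sum_n \log(Q_{n+1})/\sqrt{Q_{n-1}}$, where a naive bound of the form $\log Q_{n+1}=O(\gamma^n)$ is too weak when $\mu(x)$ is close to $5/2$, and one must instead push the logarithm back to $\log Q_{n-1}$ and exploit that $t\mapsto\log t/\sqrt t$ is decreasing while $Q_{n-1}$ already grows geometrically. The verifications that $\beta<\beta_\alpha$ in each case, and that~\eqref{eq:riv2807} dominates $\sum_n Q_{n+1}/Q_n^{3/2}$, are routine.
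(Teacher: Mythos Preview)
Your proposal is correct and follows essentially the same approach as the paper, which only sketches the argument in the remarks preceding the corollary (the choice $\alpha=s-\tfrac12$, $\beta=\tfrac{1-s}{2}$ with $\beta<\beta_\alpha$ for part~$(i)$, and $\alpha=\tfrac12$, $\beta=0$ with ``\eqref{condlog} implies \eqref{condition1bis}'' for part~$(ii)$). You have simply filled in the details the paper leaves implicit, including the verification of the irrationality-exponent thresholds via the standard formula $\mu(x)=1+\limsup_n \log Q_{n+1}/\log Q_n$; your treatment of the term $\log(Q_{n+1})/\sqrt{Q_{n-1}}$ by iterating the bound once is also a reasonable way to handle the one mildly delicate point.
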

Corollary~\ref{coro:4} is not entirely satisfying, since 
the series  $F_s(x)$ converges (even absolutely) when 
a weaker condition on the standard convergents of $x$ holds 
(Theorem \ref{theo:4}, conditions \eqref{eq:cvQps1} and \eqref{eq:cvQps2}). The main reason 
for this discrepency is the factor $\exp(i\frac{\pi}4\sum_{0\le \ell <j}\si(T^\ell x))$: it is 
present in~\eqref{eq:riv11} but not in~\eqref{eq:riv10}, respectively in~\eqref{eq:7} but not 
in~\eqref{eq:33}-\eqref{eq:3}. Our next result shows that the role of 
this factor is very important, even though its modulus is $1$. We explain after 
the theorem why we are not 
able to keep track of it in our proof of Theorem~\ref{theo:2}.
\begin{theo}
\label{theo:5} 
$(i)$ Let $\Omega$ be a bounded function, differentiable at $x=1$ and 
$x=-1$ (in particular, if $\Omega\equiv 1$). Then for any $\alpha>0$ and any irrational number 
$x\in (0,1)$, the series
\begin{equation}
\label{seriesfinal}
  \sum_{j=1}^{\infty}e^{i\frac{\pi}4\sum\limits_{\ell=0}^{j-1}\si(T^\ell x)}
\vert xT(x)\cdots T^{j-1}(x)\vert^{\alpha} \,\Omega \big(T^j(x)\big)
\end{equation}
converges.

$(ii)$ For any $\al>0$, any $\beta \in \mathbb R$ and any irrational number 
$x\in (0,1)$, the series
\begin{equation}
\label{eq:seuret4}
\sum_{j=0}^{\infty}e^{i\frac{\pi}4\sum\limits_{\ell=0}^{j-1}\si(T^\ell x)} \frac
 {\vert xT(x)\cdots T^{j-1}(x)\vert ^\alpha }{|T^j(x)|^\beta}
 \end{equation}
converges if
\begin{equation}
\label{eq:seuretcond4}
\sum_{n=1}^{\infty} \frac{ Q_{n+1}^\beta}{Q_{n}^{\alpha+\beta}}<\infty.
\end{equation}

$(iii)$ For any $\al>0$ and any irrational number 
$x\in (0,1)$, 
the series
\begin{equation}
\label{eq:seuret3}
\sum_{j=0}^{\infty}e^{i\frac{\pi}4\sum\limits_{\ell=0}^{j-1}\si(T^\ell x)}
 \vert xT(x)\cdots T^{j-1}(x)\vert^\alpha \log \left(\frac{1}{T^j(x)} \right)
 \end{equation}
 converges if
\begin{equation}
\label{eq:seuretcond3}
\sum_{n=1}^{\infty} \frac{\log (Q_{n+1})}{Q_{n}^\alpha} <\infty.
 \end{equation}
 \end{theo}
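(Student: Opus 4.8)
\medskip

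\noindent\textbf{Proof strategy.} The plan is to exploit that the unit-modulus factor $e^{i\frac{\pi}{4}\sum_{\ell=0}^{j-1}\si(T^\ell x)}$ genuinely oscillates, and that it oscillates fastest exactly where the weight $w_j:=|xT(x)\cdots T^{j-1}(x)|$ decays slowest, so that a summation by parts turns the series into a convergent one. The starting point is the dynamics of the orbit $(T^j(x))_{j\ge0}$. Away from $\{0,1,-1\}$ the map $T$ is uniformly expanding ($|T'(x)|=1/x^2>1/(1-c)^2$ for $|x|\le1-c$), while near $x=\pm1$ one has $T(x)=-\tfrac1x+2\si(x)$, whence $T(\pm1)=\pm1$ and $T'(\pm1)=1$: the points $\pm1$ are parabolic fixed points, governed by the explicit local law $1/\delta_{\ell+1}=1/\delta_\ell-1$ for $\delta_\ell:=1-|T^\ell(x)|$. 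Two consequences drive the proof: (a) the contraction ratio $w_{j+1}/w_j=|T^j(x)|$ is close to $1$ only while the orbit makes a maximal excursion inside a fixed one-sided neighbourhood $V_{+1}\subset(0,1)$ or $V_{-1}\subset(-1,0)$ of $\pm1$, such an excursion having length $\asymp1/\delta_{j_0}$, with $\delta_\ell$ increasing monotonically along it and the orbit returning to the core at its end; (b) along such an excursion $\si(T^\ell(x))$ is \emph{constant}, so the phase there equals $e^{i\frac{\pi}4 S_{j_0}}\ze^{\,j-j_0}$, with $\ze:=e^{\pm i\pi/4}$ a primitive eighth root of unity.

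\medskip

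Next I would split $\{0,1,2,\dots\}$ into consecutive blocks: the $V_{+1}$- and $V_{-1}$-excursions, the ``core'' stretches on which $T^j(x)$ stays in a fixed compact subset of $(-1,1)\setminus\{0\}$, and the isolated steps (or short runs) on which $T^j(x)$ is close to $0$. On a core stretch $|T^j(x)|\le1-c$ and $w_j$ contracts geometrically; a near-$0$ run contracts even faster; each $V_{\pm1}$-excursion is followed at once by a core step. So every block either contains, or is immediately followed by a block containing, a step contracting $w$ by a fixed factor $<1$; hence the weights $w_{j_0}$ at the starts of the successive blocks decay at least geometrically, $\sum(\text{such }w_{j_0})^\alpha<\infty$ \emph{for every} irrational $x$, and the core blocks are handled likewise (geometric series, $\Phi$ bounded away from $0$, so each contributes $O\big((\text{start-weight})^\alpha\big)$). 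It therefore remains to bound each block by $O\big((\text{start-weight})^\alpha\big)$, multiplied --- on the near-$0$ steps --- by the factor that $\Phi$ carries there, where $\Phi=\Omega$ in part~$(i)$, $\Phi(y)=|y|^{-\beta}$ in part~$(ii)$, $\Phi(y)=\log(1/y)$ in part~$(iii)$ (each such $\Phi$ being bounded on $V_{\pm1}$ and differentiable at $\pm1$).

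\medskip

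On a $V_{\pm1}$-block $j_0\le j\le j_1$ I would apply Abel summation to $\sum_j\ze^{\,j-j_0}\,w_j^\alpha\,\Phi(T^j(x))$. Since the partial sums of $(\ze^{\,j-j_0})_j$ are bounded by $2/|1-\ze|$, the block contribution is $O\big(w_{j_0}^\alpha+\operatorname{Var}_{[j_0,j_1]}\!\big(j\mapsto w_j^\alpha\Phi(T^j(x))\big)\big)$ (using $|w_{j_1}^\alpha\Phi(T^{j_1}x)|\le\|\Phi\|_{\infty,V_{\pm1}}\,w_{j_0}^\alpha$), and the crux is that this total variation is $O(w_{j_0}^\alpha)$. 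Indeed $w_j^\alpha$ is monotone (variation $\le w_{j_0}^\alpha$); $\Phi$ is bounded on $V_{\pm1}$; its first-order expansion at $\pm1$ gives $|\Phi(T^{j+1}x)-\Phi(T^jx)|=O(\delta_{j+1}+\delta_j)$; and a short computation with the parabolic law yields $w_{j+1}^\alpha\asymp w_{j_0}^\alpha\big(1-\delta_{j_0}(j-j_0)\big)^\alpha$, which decays along the block just fast enough that $\sum_j w_{j+1}^\alpha(\delta_{j+1}+\delta_j)\asymp w_{j_0}^\alpha$. Hence each $V_{\pm1}$-block contributes $O(w_{j_0}^\alpha)$; combined with the geometric bound on the core stretches and the trivial bound $O(\|\Omega\|_\infty w_{j_0}^\alpha)$ on the near-$0$ steps, and noting that inside each block (an interval of consecutive integers) the same Abel estimate bounds every partial sum, this proves that the series of part~$(i)$, in particular \eqref{seriesfinal}, converges.

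\medskip

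For parts~$(ii)$ and $(iii)$ only the size of the near-$0$ terms is left. A step with $T^j(x)$ close to $0$ follows a large partial quotient of the even continued fraction, hence --- via the standard dictionary between the even and the regular continued fraction of $x$ --- a large partial quotient of the regular one; for it $w_j\asymp Q_n^{-1}$ and $|T^j(x)|\asymp Q_n/Q_{n+1}$, so its term has size $\asymp Q_n^{-\alpha}(Q_{n+1}/Q_n)^{\beta}=Q_{n+1}^{\beta}/Q_n^{\alpha+\beta}$ in part~$(ii)$ and $\asymp Q_n^{-\alpha}\log(Q_{n+1}/Q_n)\le Q_n^{-\alpha}\log Q_{n+1}$ in part~$(iii)$ --- no cancellation is needed here, only this bound (and a geometric summation over a run of consecutive large quotients). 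Summing over $n$ produces exactly the hypotheses \eqref{eq:seuretcond4} and \eqref{eq:seuretcond3}; adding the unconditional $O\big(\sum(\text{start-weight})^\alpha\big)$ from the previous paragraphs (which these series dominate when $\beta\ge0$, and which converges in any case) shows that \eqref{eq:seuret4} and \eqref{eq:seuret3} converge. I expect the main difficulty to be not the cancellation itself --- that the eighth roots of unity sum to $0$, hence a bounded Abel kernel and a gain of one full power of $Q_{n+1}/Q_n$ over Theorems~\ref{theo:2}--\ref{theo:3}, is elementary --- but the quantitative control of the $T$-orbit: showing that the slow decay of $w_j$ occurs only on the $V_{\pm1}$-excursions, determining their lengths, establishing $w_j\asymp Q_n^{-1}$ at the correct indices, and the bookkeeping matching the excursions and near-$0$ steps to stages of the regular continued fraction. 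This is precisely where the infinite invariant measure of $T$ --- its non-integrable singularities at $\pm1$ --- makes the analysis delicate and rules out an ergodic shortcut.
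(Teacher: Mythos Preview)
Your approach is essentially the same as the paper's: decompose the $T$-orbit into excursions near the parabolic fixed points $\pm1$ (where $\si(T^\ell x)$ is constant, the phase runs through eighth roots of unity, and the parabolic law $1/\delta_{\ell+1}=1/\delta_\ell-1$ governs the slow decay of $w_j$) versus the complementary part where $w_j$ contracts uniformly, then use the cancellation together with the differentiability of $\Omega$ at $\pm1$ to bound each excursion by $O(w_{j_0}^\alpha)$; for parts~$(ii)$ and~$(iii)$, match the near-$0$ steps to stages of the regular continued fraction. The only cosmetic differences are that the paper works with the folded map $U$ on $[0,1]$ (so that $|U^j|=|T^j|$) and handles the excursions by grouping in explicit blocks of eight rather than by Abel summation, and it spells out your ``standard dictionary'' between even and regular continued fractions as a separate lemma (Lemma~\ref{lemmefinal}), including the injectivity of $j\mapsto n$ that you will need.
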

These convergence properties are essentially optimal. They are 
very different from the absolute convergence properties (which are also essentially optimal), 
as stated in Theorem~\ref{theo:2}. 

In fact, in our proof of Theorem~\ref{theo:2}, only one technical detail impedes us to 
prove that formulas~\eqref{eq:riv11} 
and~\eqref{eq:7}  hold true when conditions~\eqref{eq:cvQps1} 
and~\eqref{eq:cvQps2} are satisfied (and not only when the more 
constraining  conditions in $(i)$ and $(ii)$ of Corollary~\ref{coro:4} hold). 
Indeed, we show that  the convergence of the series~\eqref{eq:riv11}  
and~\eqref{eq:7} is equivalent to the convergence of three auxiliary 
simpler series (see equations~\eqref{eq:161} and~\eqref{eq:16}). 
For two of these series,  their convergence 
follows from Theorem~\ref{theo:5}, and the convergence conditions 
are optimal (i.e.,  when conditions~\eqref{eq:cvQps1} 
and~\eqref{eq:cvQps2} are satisfied). For the third series, which 
contains heuristically a sort of "error" term, we do not have an  
estimate precise enough to apply Theorem~\ref{theo:5}, and we can 
only use Theorem~\ref{theo:3} and the conditions ensuring absolute 
convergence of the series~\eqref{eq:seuret3} and~\eqref{eq:seuret4}, which are stronger. 
Nevertheless, we do believe that conditions~\eqref{eq:cvQps1} 
and~\eqref{eq:cvQps2} imply 
the identities~\eqref{eq:riv11} and~\eqref{eq:7} respectively.

\subsection{Some further remarks}

For $t$ not necessarily an integer, 
the iteration of~\eqref{eq:1} leads to an identity, for which we 
have to introduce the following sequences of operators:  
\begin{align*}
\widetilde{T}_0(x,t) = x, \quad \widehat{T}_{0}(x,t)=t &, \quad 
\widetilde{T}_1(x,t) = \left\{\frac{\{\sigma(x)t\}}{x}\right\}, 
\quad \widehat{T}_{1}(x,t)=\frac{\{\sigma(x)t\}^2}x
\\
\widetilde{T}_{j+1}(x,t) = \widetilde{T}_1(T^j(x),\widetilde{T}_j(x,t)),& \qquad 
\widehat{T}_{j+1}(x,t) = \widehat{T}_{1}(T^j(x),\widetilde{T}_j(x,t))\qquad (j\ge 0).
\end{align*}
Then,  
for any fixed $t\in [0,1]$ and $s>\frac12$, 
the following identity~(\footnote{This could be precisely described as in 
Theorem~\ref{theo:2} but, again, we skip the details to shorten the length of the paper.}) holds for almost every $x$: 
\begin{equation}\label{eq:TtildeT}
F_s(x,t)=\sum_{j=0}^{\infty} e^{i\pi \sum\limits_{\ell=0}^{j-1} 
\left(\frac{1}4\si(T^\ell x)-\widehat{T}_{\ell+1}(x,t) \right)}
\vert xT(x)\cdots T^{j-1}(x)\vert^{s-\frac12} \,\Omega_s\big(T^j(x), \widetilde{T}_j(x,t)\big).
\end{equation}
This new representation of $F_s(x,t)$ is similar to Identity~\eqref{eq:GtildeG} displayed after 
Theorem~\ref{theo:4}. For $x$ unspecified, Eq.~\eqref{eq:TtildeT} is simpler than~\eqref{eq:GtildeG} when $t=0$,  because $\widetilde{T}_j(x,0)=\widehat{T}_j(x,0)=0$ for all $j$ while neither 
$\widetilde{G}_j(x,0)$ nor $\widehat{T}_j(x,0)$ necessarily vanish. Finally, it 
is easy to see that when $x$ has {\em even} partial quotients and $t=0$, then the summands 
of~\eqref{eq:TtildeT} and~\eqref{eq:GtildeG} are equal. The simplicity of \eqref{eq:TtildeT} (relatively 
to~\eqref{eq:GtildeG}) when $t=0$ was our motivation to make the detailed study of various series defined in term of the operator $T$, for which apparently nothing was done in the direction of our results.

\medskip

Finally, when $s>1$, the series $F_s(x,t)$ obviously  
converges absolutely for any real numbers $x$ and~$t$. It turns out that Identities~\eqref{eq:GtildeG} and~\eqref{eq:TtildeT} hold for any 
$t$ and any irrational number $x$, and with minor modification for any rational number $x$ as well. On the one hand, this is not difficult to prove for~\eqref{eq:GtildeG}, whose right hand side converges very quickly. 
On the other hand, the convergence of the right-hand side of~\eqref{eq:TtildeT} for all irrational $x$ is a consequence of Theorem~\ref{theo:5}$(i)$ applied with $\alpha=s-\frac12$ because for $s>1$, $\Omega_s(x)$ is bounded on $[-1,1]$ and differentiable at $x=\pm 1$ by Theorem~\ref{theo:1}$(iii)$.

We note that $T$ is closely related to the Theta group, a subgroup of $SL_2(\mathbb Z)$ of the matrices $\binom{a\;\; b}{c\;\; d}$ with $a\equiv d \mod 2$, $b\equiv c \mod 2$; see~\cite{K-L}. This relation has been used in the papers~\cite{duisd, itatsu} to study the (non)-derivability of Riemann series $\textup{Im}(F_2(x))$, culminating with Jaffard's determination of its spectrum of singularities~\cite{JaffRiemann}. It would be very interesting to know if Jaffard's results can be recovered by a direct study of~\eqref{eq:TtildeT} in the case $s=2$ and $t=0$, which reads
$$
F_2(x)= \sum_{k=1}^{\infty} \frac{e^{i \pi k^2 x}}{k^2} = \sum_{j=0}^{\infty}e^{i\frac{\pi}4\sum\limits_{\ell=0}^{j-1}\si(T^\ell x)}
\vert xT(x)\cdots T^{j-1}(x)\vert^{\frac 32} \,\Omega_2\big(T^j(x)\big),  
$$
where $\Omega_2(x)$ is differentiable on $[-1,1]\setminus\{0\}$, and continuous at 0.

\begin{figure} 
\centering
\epsfig{figure=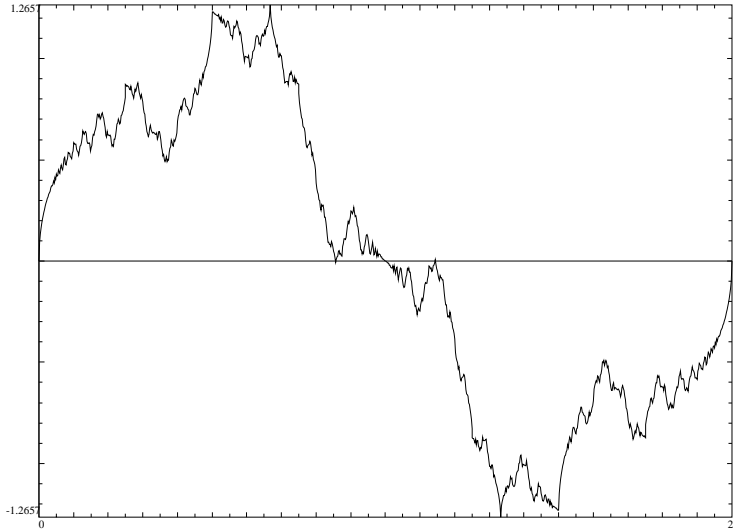, angle=-90, width=0.6\textwidth}
\caption{The ``non-differentiable'' Riemann function  $\textup{Im}(F_2(x))$ on $[0,2]$}
\label{fig0} 
\end{figure}

\bigskip

The paper is organized as follows.  We start by recalling some 
facts on regular and even continued fractions in Section~\ref{sec:1}. Then, 
in Section~\ref{sec:HL}, we prove Theorem~\ref{theo:4}.  Section~\ref{sec:newthm}, 
which is rather long, contains the proof of the approximate functional  
equation for $F_{s,n}$ (part $(i)$ of  Theorem~\ref{theo:1}). The 
second part $(ii)$ of Theorem~\ref{theo:1}, i.e. the regularity 
properties of the functions $\Omega_s$, is dealt with in 
Section~\ref{sec:2}. Theorem~\ref{theo:2} and the Diophantine 
identities~\eqref{eq:riv11} 
and~\eqref{eq:7} are proven in Section~\ref{sec:4}. Finally, 
the  standard and absolute convergence properties of the 
series~\eqref{eq:seuret3} and~\eqref{eq:seuret4}  (Theorems~\ref{theo:5}    
and~\ref{theo:3}) are studied in Sections~\ref{sec:10_2} and~\ref{sec:10_1}. 

\section{Basic properties of regular and even continued fractions}\label{sec:1}

 The regular  theory of continued fractions is well-known, and is related to 
Gauss dynamical system $G:x\in (0,1]\mapsto 1/x \mod 1$.  In the following, we 
use capital letters when we refer to the regular convergent $P_n/Q_n$ of an irrational 
number $x\in \R$. We set
\begin{equation}
\label{eq:SCF}
\frac{P_n}{Q_n}:=\lfloor x \rfloor+ \frac{1}{A_1+\dfrac{1}{A_2+\dfrac{1}
{\ddots +\dfrac{1}{A_n}}}}
\  \ \mbox{ 
with
} \ \ 
A_n=  \left\lfloor \frac{1}{G^{n}( x)} \right \rfloor,
\end{equation}
where $G^n$ is the $n$-th iterate of $G$. We write the RCF (Regular Continued Fraction) of $x$ as
$$
x=\lfloor x \rfloor+[A_1,A_2,....]_R.
$$
Recall that one has the recurrence relations 
$$
P_{n+1}=A_{n+1}P_n+P_{n-1}  \ \mbox { and } \ Q_{n+1}=A_{n+1}Q_n+Q_{n-1}.
$$

It is also classical that in this case, for every irrational $x\in \zu$, 
\begin{equation}
\label{xGxG2x}
\left|xG(x)G^2(x) \cdots G^n(x)\right| \leq  |Q_nx-P_n| \leq (Q_{n+1})^{-1}.
\end{equation}
This guarantees  the convergence for every $\alpha>0$ and $x\in \R$ of the series 
\begin{equation}
\label{convergesum1}
\sum_{n\geq 1} \left|xG(x)G^2(x) \cdots G^n(x)\right|^\alpha.
\end{equation}

 \medskip
 
  In Theorems \ref{theo:2} and \ref{theo:3}, the natural 
underlying dynamical system is the one generated by the map 
$T: [-1,1]\setminus \{0\} \mapsto  [-1,1] $ defined by 
$T(x) = -1/x \mod 2$. As is classical with the Gauss map $G$, 
using this transformation $T$ one can associate with each irrational 
real number $x\in [-1,1]\setminus \{0\} $ a kind of continued 
fraction:  for every $j\geq 1$, denote by $a_j$ the unique even 
number such that $ T^j (x) -a_j \in (-1,1)$, and set 
$e_j = \si(T^j (x))$. Then one has the unique decomposition called
 the even continued fraction (ECF) expansion (see \cite {K-L} for instance)
\begin{equation}
\label{eq:111}
 x=\dfrac{e_1}{a_1+\dfrac{e_2}{a_2+\dfrac{e_3}{a_3+ ...}}}.
 \end{equation}
 The difference with the RCF expansion \eqref{eq:SCF} is twofold: first, 
only even integers $(a_j)_{j\geq 1}$ are allowed in the decomposition, and 
second the integers $e_1, e_2$, etc,  may take the values $1$ and $-1$ 
(not only $1$). For compactness, we write for $x\in (-1.1)$
 \begin{equation}
\label{defevenx}
x= [(e_1,a_1),(e_2,a_2),....]_E.
\end{equation}
 
Now, given the ECF \eqref{eq:111},  
we define the $n$-th convergent and the $n$-th remainder respectively as 
$$
\frac{p_n}{q_n}:=\frac{1}{a_1+\dfrac{e_1}{a_2+\dfrac{e_2}
{\ddots +\dfrac{e_{n-1}}{a_n}}}}
\  \ \mbox{ 
and 
} \ \ 
x_n:=\frac{e_n}{a_{n+1}+\dfrac{e_{n+1}}{a_{n+2}+\dfrac{e_{n+2}}{\ddots}}}.
$$
We use small letters for ECF, and capital letters for RCF.

 \medskip
 
 ECF expansions  are obtained from the RCF expansions via the following iterative 
method. Observe that for any positive integers $(A_n,A_{n+1}, A_{n+2})$ and any 
positive real number $\gamma$, one has
\begin{equation}
\label{singularization}
A_n+\dfrac{1}{A_{n+1} + \dfrac{1}{A_{n+2}+\gamma}} =(A_n+1) 
+\dfrac{-1}{2 +\dfrac{-1}{2 + .... +\dfrac{-1}{2+\dfrac{-1}{(A_{n+2}+1)+\gamma}}}},
\end{equation}
 where the term $\dfrac{-1}{2+...}$ appears exactly $A_{n+1} -1$ times. 
 
 The procedure is then as follows: Write the RCF expansion of a real 
number $x=[A_1,A_2,...]_R$ as $x=[(1,A_1),(1,A_2),...]_E$, which looks 
like an ECF expansion, except that all $e_n$ are 1 and some integers $A_n$ may be odd. 
 
 If all $A_n$ are even, then this expansion is indeed the ECF of $x$. 
 
 Otherwise,  consider the smallest index $n$ such that $A_n$ is odd, and 
apply \eqref{singularization} to transform 
 $$x=[(1,A_1), ... ,(1,A_n),(1,A_{n+1}),(1,A_{n+2}),(1,A_{n+3}),...]_E$$ into 
 $$ [(1,A_1), ..., (1,A_n+1), (-1,2),...,(-1,2), (-1,A_{n+2}+1),(1,A_{n+3}),...]_E.$$
  The odd number $A_n$ has been removed, and one iterates the procedure with this 
new ECF-like expansion, whose coefficients before $A_{n+2}+1$ are even. By 
uniqueness of the ECF expansion for irrational numbers, the  expansion 
obtained as a limit is indeed the ECF expansion of $x$.

 \smallskip
 
 From the above construction, one also derives some useful properties between 
the even and the regular convergents. Next Proposition is contained in~\cite{K-L}. 
\begin{prop}
\label{prop:ecf2}
Let $x$ be an irrational number in $(0,1)$.

$(i)$ For any $n\geq 1$, if  the regular convergent $P_n/Q_n$ is not an 
even convergent $p_j/q_j$, then necessarily $P_{n+1}/Q_{n+1}$ is  an even   convergent.

$(ii)$  If  the regular convergent $P_n/Q_n$ is equal  to  the even convergent 
$p_j/q_j$ for some $j\geq 1$, and if  $P_{n+1}/Q_{n+1}$ is  also an even convergent,  
then   $p_{j+1}/q_{j+1} = P_{n+1}/Q_{n+1}$.

$(iii)$  If  the regular convergent $P_n/Q_n$ is equal  to  the even convergent $p_j/q_j$ 
for some $j\geq 1$, and if  $P_{n+1}/Q_{n+1}$ is not an even convergent,  then  
for every $m\in\{1, ..., A_{n+2}\}$,
$$
\frac{p_{j+m}}{q_{j+m}} =  \frac{mP_{n+1}+P_{n}}{m Q_{n+1}+Q_{n}}.
$$
In particular,  $\displaystyle \frac{p_{j+A_{n+2}}}{q_{j+A_{n+2}}} = \frac{P_{n+2}}{Q_{n+2}}$.
\end{prop}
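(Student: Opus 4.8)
\textbf{Proof plan for Proposition~\ref{prop:ecf2}.}

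The plan is to derive all three statements directly from the singularization procedure described just above the proposition, by tracking exactly which regular convergents survive as even convergents. Set up notation: write the RCF of $x$ as $[A_1,A_2,\dots]_R$ and recall that turning it into the ECF amounts to repeatedly applying \eqref{singularization} at the smallest surviving odd partial quotient. The key observation I would isolate first is a \emph{local} one: a single application of \eqref{singularization} at position $n$ (where $A_n$ is odd) replaces the pair of partial quotients $(A_n,A_{n+1})$ by the block $(A_n+1,\underbrace{2,\dots,2}_{A_{n+1}-1})$ with signs $(+,-,\dots,-)$ and changes $A_{n+2}$ into $A_{n+2}+1$, while leaving $A_1,\dots,A_{n-1}$ and $A_{n+3},A_{n+4},\dots$ untouched. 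From the standard convergent recurrences one checks that the intermediate matrices $\binom{A_n\;\,1}{1\;\,0}\binom{A_{n+1}\;\,1}{1\;\,0}$ and $\binom{A_n+1\;\;\,1}{1\;\;\,0}\binom{2\;\,-1}{1\;\;\,0}\cdots\binom{2\;\,-1}{1\;\;\,0}$ are related so that the convergent $P_n/Q_n$ (the one "between" $A_n$ and $A_{n+1}$) is precisely the convergent that disappears, and $P_{n-1}/Q_{n-1}$, $P_{n+1}/Q_{n+1}$ are unchanged. This is the computational heart of the argument; everything else is bookkeeping.

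Granting that, item $(i)$ follows because the only convergents that can ever be deleted are those sitting immediately after an odd partial quotient, and such a deletion never deletes two consecutive convergents: after $A_n$ is made even, the next convergent $P_{n+1}/Q_{n+1}$ survives this step, and although it may itself later be deleted if $A_{n+1}$ (or rather its value $A_{n+2}+1$ shifted along) is odd, one cannot have both $P_n/Q_n$ and $P_{n+1}/Q_{n+1}$ deleted — the deletion at $n$ requires $A_n$ odd and keeps index $n+1$, and subsequent deletions only affect indices $\ge n+2$ or the value at $n+1$ after the shift, never index $n$ again. So if $P_n/Q_n$ is not an even convergent, it was deleted at some stage, which forces $P_{n+1}/Q_{n+1}$ to survive, i.e. to be an even convergent. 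For item $(ii)$: if $P_n/Q_n = p_j/q_j$ survives and $P_{n+1}/Q_{n+1}$ also survives, then no singularization ever acts "between" them, so they are consecutive in the ECF, whence $p_{j+1}/q_{j+1} = P_{n+1}/Q_{n+1}$ by uniqueness of the ECF expansion for irrationals.

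For item $(iii)$, the situation is that $P_n/Q_n = p_j/q_j$ survives but $P_{n+1}/Q_{n+1}$ is deleted; this is exactly the case where $A_{n+1}$ is odd at the moment of deletion, and \eqref{singularization} inserts a block of $A_{n+2}-1$ copies of $(-1,2)$ between $p_j/q_j$ and the convergent $P_{n+2}/Q_{n+2}$ (with $A_{n+2}+1$ appearing next, but the convergents generated along the block of $2$'s are what matter). Using the mediant-type recurrence for convergents built from partial quotient $2$ with the negative signs $e=-1$, i.e. $p_{j+m}/q_{j+m}$ obtained from $\binom{2\;-1}{1\;\;\,0}$ applied $m-1$ times after the step to $P_n/Q_n,P_{n+1}/Q_{n+1}$, one computes inductively that $q_{j+m}x - p_{j+m}$, or equivalently $p_{j+m}/q_{j+m}$, equals $(mP_{n+1}+P_n)/(mQ_{n+1}+Q_n)$ for $m=1,\dots,A_{n+2}$; the case $m=A_{n+2}$ gives $(A_{n+2}P_{n+1}+P_n)/(A_{n+2}Q_{n+1}+Q_n) = P_{n+2}/Q_{n+2}$ by the RCF recurrence, confirming consistency. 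The main obstacle throughout is purely the careful matrix/index bookkeeping in the first paragraph — making precise "which convergent is between $A_n$ and $A_{n+1}$" and verifying that a singularization step has exactly the claimed effect on the three relevant convergents $P_{n-1}/Q_{n-1}$, $P_n/Q_n$, $P_{n+1}/Q_{n+1}$; since this is already essentially done in~\cite{K-L}, I would cite it for the verification of \eqref{singularization}'s effect and present the above as the derivation of $(i)$–$(iii)$ from it.
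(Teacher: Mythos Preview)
The paper does not actually prove Proposition~\ref{prop:ecf2}: it simply states that the result ``is contained in~\cite{K-L}'' and moves on. Your plan---deriving $(i)$--$(iii)$ from the singularization procedure~\eqref{singularization} and citing~\cite{K-L} for the matrix/index verification---is exactly the natural argument behind that citation, so you are supplying more detail than the paper itself and your approach is fully aligned with it.

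One small caution on your sketch of $(iii)$: be careful with the index shift between ``the first odd partial quotient'' in the paper's description of the procedure and the index $n$ in the proposition (where $P_n/Q_n$ survives and $P_{n+1}/Q_{n+1}$ is deleted). In the paper's formulation of~\eqref{singularization}, the partial quotient that gets absorbed into the block of $2$'s is the middle one, and the number of inserted $(-1,2)$'s is governed by that middle quotient, not by $A_{n+2}$ in the proposition's indexing; reconciling these two indexings is precisely the ``careful bookkeeping'' you flag, and since you already plan to defer to~\cite{K-L} for it, this is not a gap but just a place to double-check before writing out the inductive computation of $p_{j+m}/q_{j+m}$.
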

Hence,  amongst  two consecutive regular convergents, there 
is at least one even convergent, and an even convergent     $p_j/q_j$  is either 
a principal or a median convergent 
of the regular continued fraction. 
 This will be useful to prove 
Theorem \ref{theo:3}.

The next proposition gathers some useful informations about even 
continued fraction (references include~\cite[p. 307]{K-L},~\cite[p. 2027, eq. (1.13)]{Sinai},
and~\cite{S1,S2}).  
\begin{prop}
\label{prop:ecf1}
For every irrational $x\in \zu$ and every $j\geq 1$, we have
$$
q_{j+1}>q_j, \quad 
\lim_{n\to +\infty} (q_{n+1}-q_n)=+\infty
$$ 
and 
\begin{equation}
\label{majecf}
\frac{1}{2q_{j+1}}\leq \vert xT(x)\cdots T^j(x)\vert = 
\frac1{\vert q_{j+1}+e_{j+1}x_{j+1}q_j\vert}\le \frac{1}{q_{j+1}-q_j}.
\end{equation}
\end{prop}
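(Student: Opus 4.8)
The final statement to prove is Proposition~\ref{prop:ecf1}, which collects basic facts about the even continued fraction (ECF): monotonicity $q_{j+1}>q_j$, divergence $q_{n+1}-q_n\to\infty$, and the two-sided estimate~\eqref{majecf} relating the product $|xT(x)\cdots T^j(x)|$ to the denominators $q_j,q_{j+1}$.

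\textbf{Proof proposal.}

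The plan is to build everything on the recurrence relations for the ECF convergents and on the identity $x = \dfrac{p_{j+1} + x_{j+1} p_j}{q_{j+1} + x_{j+1} q_j}$ (with the appropriate signs $e_j$), together with the key fact that the product $xT(x)\cdots T^j(x)$ telescopes. First I would record the recurrences: from the definition $\frac{p_n}{q_n} = \cfrac{1}{a_1 + \cfrac{e_1}{a_2 + \cdots}}$ one gets $q_{n+1} = a_{n+1} q_n + e_n q_{n-1}$ and $p_{n+1} = a_{n+1} p_n + e_n p_{n-1}$, with $q_0 = 1$, $q_{-1}=0$ (and $p$ analogously), and the determinant identity $p_{n+1} q_n - p_n q_{n+1} = (-1)^{?}\prod e_\ell = \pm\prod_{\ell\le n} e_\ell$, which has modulus $1$. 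Since every $a_{n+1}$ is an even integer with $|a_{n+1}|\ge 2$ and $e_n = \pm 1$, we have $q_{n+1} = a_{n+1}q_n + e_n q_{n-1} \ge 2 q_n - q_{n-1}$; an easy induction (using $q_0=1 > q_{-1}=0$, hence $q_1 = a_1 \ge 2 > q_0$) then gives $q_{n+1} - q_n \ge q_n - q_{n-1} \ge \cdots \ge q_1 - q_0 \ge 1$, which yields both $q_{j+1} > q_j$ and monotonicity of the gaps. For the divergence $q_{n+1} - q_n \to \infty$ I would argue that if the gaps stayed bounded, then so would $(q_n)$ along a subsequence; but $q_n \to \infty$ (since it is strictly increasing through integers) forces, via $q_{n+1} - q_n \ge 2q_n - q_{n-1} - q_n = q_n - q_{n-1}$ combined with the equality case $a_{n+1} = \pm 2$, $e_n$ chosen adversarially — more carefully: $q_{n+1} - q_n = (a_{n+1}-1)q_n + e_n q_{n-1} \ge q_n - q_{n-1} \ge 1$ when $a_{n+1} = 2$, and $\ge q_n$ when $|a_{n+1}| \ge 4$ or $a_{n+1} = -2$; so a bounded-gap scenario requires $a_{n+1} = 2$ and $e_n$ producing cancellation for all large $n$, i.e. $q_{n+1} - q_n = q_n - q_{n-1}$ eventually constant $= c$, which still forces $q_n \sim cn$ — but then one checks this configuration is incompatible with $x$ being irrational in $(-1,1)$, or more simply, it is a measure-zero / excluded case handled by the references \cite{K-L}, \cite{Sinai}. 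I expect the cleanest route is to cite Proposition~\ref{prop:ecf2} linking even and regular convergents: since among two consecutive regular convergents there is at least one even convergent, $q_{j+1} \ge Q_{n}$ for a corresponding regular index, and $Q_{n+1} - Q_n \to \infty$ is classical, transferring divergence to the $q_j$.

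For the estimate~\eqref{majecf}, the central identity is $xT(x)\cdots T^j(x) = \pm\prod_{\ell=1}^{j+1} x_{\ell-1}$ in an appropriate sense — more precisely, writing $x = x_0$ and using $T(x_{\ell-1}) = x_\ell \cdot(\text{sign adjustments})$ one shows $|x\,T(x)\cdots T^j(x)| = |x_0 x_1 \cdots x_j|$, and then the standard continued-fraction telescoping gives $x_0 x_1 \cdots x_j = \dfrac{(-1)^{\,?}\prod e}{q_{j+1} + e_{j+1} x_{j+1} q_j}$ in modulus, i.e. the stated equality $|xT(x)\cdots T^j(x)| = \dfrac{1}{|q_{j+1} + e_{j+1} x_{j+1} q_j|}$. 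To finish, I bound the denominator: since $x_{j+1} \in (-1,1)$ and $|e_{j+1}| = 1$, we have $|e_{j+1} x_{j+1} q_j| < q_j$, so $|q_{j+1} + e_{j+1}x_{j+1}q_j|$ lies in $(q_{j+1} - q_j,\, q_{j+1} + q_j)$; the lower bound $q_{j+1}-q_j$ gives the right inequality $\le \frac{1}{q_{j+1}-q_j}$, and for the left inequality $\frac{1}{2q_{j+1}} \le |xT(x)\cdots T^j(x)|$ I use $q_{j+1} + q_j \le 2q_{j+1}$ (from $q_j < q_{j+1}$), so the denominator is $\le 2q_{j+1}$.

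\textbf{Main obstacle.} The routine parts are the recurrences and the denominator bounds; the delicate point is twofold: (a) carefully justifying the telescoping identity $|xT(x)\cdots T^j(x)| = |x_0\cdots x_j| = 1/|q_{j+1}+e_{j+1}x_{j+1}q_j|$ keeping track of the signs $e_\ell$ — this requires verifying that the action of $T(x) = -1/x \bmod 2$ on the remainder $x_{\ell-1}$ produces exactly $x_\ell$ up to sign, which is essentially the definition of the ECF but needs the sign bookkeeping done once and correctly; and (b) the divergence $q_{n+1} - q_n \to \infty$, which is the only statement not following from a one-line induction — I would most likely dispatch it by quoting \cite{K-L, Sinai, S1, S2} or by the comparison with regular convergents via Proposition~\ref{prop:ecf2}, rather than a self-contained combinatorial argument, since the borderline "all $a_n = 2$ with cancelling signs" configuration needs the irrationality hypothesis to be ruled out and that verification is exactly the content of the cited ECF literature.
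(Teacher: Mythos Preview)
The paper does not prove Proposition~\ref{prop:ecf1} at all: it simply states the proposition and refers to \cite[p.~307]{K-L}, \cite[p.~2027, eq.~(1.13)]{Sinai}, and \cite{S1,S2} for the proofs. So there is no argument in the paper to compare against.

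Your outline is essentially correct and would constitute a genuine proof where the paper offers none. The recurrence $q_{n+1}=a_{n+1}q_n+e_nq_{n-1}$ with $a_{n+1}\ge 2$ even and $e_n=\pm1$ gives $q_{n+1}-q_n\ge q_n-q_{n-1}$, hence strict monotonicity by induction; the telescoping identity $|xT(x)\cdots T^j(x)|=1/|q_{j+1}+e_{j+1}x_{j+1}q_j|$ is the standard continued-fraction computation; and the two-sided bound follows exactly as you say from $|x_{j+1}|<1$ and $q_j<q_{j+1}$. The one point you correctly flag as delicate---the divergence $q_{n+1}-q_n\to\infty$---is indeed not a one-line consequence of the recurrence, since the configuration $a_n\equiv 2$, $e_n\equiv -1$ gives constant gaps but corresponds to the excluded rational endpoint $x=\pm1$. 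Your proposed route via Proposition~\ref{prop:ecf2} works and is in fact the computation the paper itself carries out later in Section~\ref{sec:10_1}: in each of the three cases there one finds $q_j-q_{j-1}\ge Q_{m}$ for some regular-denominator index $m=m(j)\to\infty$, which forces the gaps to diverge. Citing \cite{K-L} is also perfectly acceptable and is what the paper does.
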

(We will freely use these properties without necessarily quoting Proposition~\ref{prop:ecf1}.)
Unfortunately there is no uniform convergence rate for this sequence 
that guarantees the convergence of series of the form~\eqref{eq:seuret4} for every $x$.
This is in sharp contrast with the classical continued fractions and 
equation~\eqref{convergesum1}. Nevertheless we have found some optimal condition 
to guarantee the convergence of the sum $\sum_{j\ge 0}\vert xT(x)\cdots T^j(x)\vert^\alpha$, 
see Theorem~\ref{theo:3}.

\section{Proof of Theorem~\ref{theo:4}}\label{sec:HL}

In this section, we obtain sufficient conditions of convergence of $F_s(x,t)$ expressed in term of 
the usual regular continued fraction of $x$. These conditions are simple consequences of the 
following proposition, due to Hardy and Littlewood~\cite{HL}. At the end of this section, we present an 
identity which, in principle, would be a 
qualitative version of Theorem~\ref{theo:4}.

\begin{prop}\label{prop:H-L} For any irrational number $x$ in $(0,1)$ with regular continued  
fraction $(P_k/Q_k)_{k\ge 0}$ and any $t\in \mathbb R$, we have
\begin{equation}\label{eq:ineqHL}
\sum_{k=1}^N e^{i\pi k^2 x+2i\pi k t} = \mathcal{O}\left(\frac{N}{\sqrt{Q_r}}+\sqrt{Q_r}\right)
\end{equation}
for any integers $N,r\ge 0$, where the implicit constant is absolute.
\end{prop}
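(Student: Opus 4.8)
The plan is to establish the Hardy--Littlewood bound~\eqref{eq:ineqHL} by the classical Weyl-differencing / van der Corput argument adapted to the exponential sum $S_N(x,t)=\sum_{k=1}^N e^{i\pi k^2 x+2i\pi k t}$, iterated just once and then combined with the quality of rational approximation supplied by a regular convergent $P_r/Q_r$ of $x$. First I would write $x=P_r/Q_r+\theta$ with $|\theta|\le 1/(Q_r Q_{r+1})\le 1/Q_r^2$ by~\eqref{xGxG2x} (more precisely $|Q_r x-P_r|\le Q_{r+1}^{-1}$), so that $x$ is within $1/Q_r^2$ of a rational with denominator $Q_r$; this is exactly the hypothesis under which the standard estimate $|\sum_{k\le N}e^{i\pi k^2 x}|\ll N/\sqrt{q}+\sqrt{q}\log q$ for $|x-p/q|\le 1/q^2$ is usually proved, and the point will be to get the clean form without the $\log$ factor, which is possible here because we are free to also use the Gauss-sum evaluation on complete blocks.

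The key steps, in order: (1) Square-and-difference: $|S_N(x,t)|^2=\sum_{|h|<N}e^{i\pi h^2 x}\big(\sum_{k} e^{2i\pi k h x}e^{2i\pi h t}\big)$ where the inner sum over $k$ ranges over an interval of length $\le N$, giving $|S_N(x,t)|^2\ll N+\sum_{1\le h<N}\min\big(N,\|hx\|^{-1}\big)$, with $\|\cdot\|$ the distance to the nearest integer; note the $t$-dependence has disappeared, which is why the implied constant is absolute and independent of $t$. (2) Bound the sum over $h$ using the good approximation $|x-P_r/Q_r|\le 1/Q_r^2$: split the range $1\le h<N$ into $O(N/Q_r+1)$ blocks of length $Q_r$; on each block the values $\|hx\|$ are, up to the error from $\theta$, a permutation of $\{j/Q_r: 0\le j<Q_r\}$, so $\sum_{\text{block}}\min(N,\|hx\|^{-1})\ll N+Q_r\log Q_r$ by the usual harmonic-sum estimate. (3) Summing over the $O(N/Q_r+1)$ blocks gives $|S_N(x,t)|^2\ll N+(N/Q_r+1)(N+Q_r\log Q_r)\ll N^2/Q_r+N\log Q_r+Q_r\log Q_r$. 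Taking square roots this is $O(N/\sqrt{Q_r}+\sqrt{Q_r}\log Q_r+\sqrt{N\log Q_r})$; the middle term already carries a spurious $\log$, so a small additional argument is needed to remove it.

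To remove the logarithm and reach the stated clean bound, I would instead, on each complete block of length $Q_r$, evaluate the quadratic Gauss sum exactly: $\sum_{k=1}^{Q_r}e^{i\pi (a k^2+bk)}$ with $a\approx P_r/Q_r$ has modulus $O(\sqrt{Q_r})$ by the classical Gauss-sum estimate (handling the parity of $P_r,Q_r$ and the linear term $b$ by completing the square / standard reciprocity), while the contribution of $\theta$ over a block of length $Q_r$ is controlled since $|\theta| Q_r^2\le 1$, i.e. the phase $e^{i\pi\theta k^2}$ varies by a bounded amount across the block so partial summation against it costs only a bounded factor. This gives directly $|S_N(x,t)|\ll (N/Q_r+1)\sqrt{Q_r}+\sqrt{Q_r}=O(N/\sqrt{Q_r}+\sqrt{Q_r})$, the incomplete final block contributing $O(\sqrt{Q_r})$ by the same Gauss-sum bound applied to a subinterval. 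The absoluteness of the constant is clear since $t$ enters only through the linear phase $e^{2i\pi kt}$, absorbed into $b$, and the Gauss-sum bound is uniform in $b$.

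The main obstacle I expect is step (3)'s logarithm, i.e. getting the \emph{clean} $O(N/\sqrt{Q_r}+\sqrt{Q_r})$ rather than the easy $O(N/\sqrt{Q_r}+\sqrt{Q_r}\log Q_r)$: this forces the switch from the crude Weyl-differencing bound on $\sum_h\min(N,\|hx\|^{-1})$ to the sharper route through exact Gauss-sum evaluation on each block together with a careful treatment of the perturbation $e^{i\pi\theta k^2}$ via summation by parts, using $|\theta|\le Q_r^{-2}$ in an essential way. Verifying the Gauss-sum modulus bound uniformly over the linear term $b$ (which encodes both $t$ and the cross term from the perturbation) and over the parities of $P_r$ and $Q_r$ is the one genuinely delicate point; everything else is bookkeeping over the $O(N/Q_r+1)$ blocks.
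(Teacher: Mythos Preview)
The paper does not give a self-contained argument for this proposition: it simply records that~\eqref{eq:ineqHL} follows by iterating the approximate functional equation for the theta sum (the case $s=0$ of Theorem~\ref{theo:1}, where the error term is $\mathcal O(1/\sqrt{|x|})$) along the continued fraction of $x$, and refers to~\cite{choque} for details. Your approach is genuinely different in spirit---Weyl differencing, then Gauss sums on blocks of length $Q_r$---so a comparison is in order, but there is also a real gap.

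Your first route (Weyl/van der Corput) is correct and standard, and you rightly observe that it only yields $N/\sqrt{Q_r}+\sqrt{Q_r\log Q_r}$; removing the logarithm is precisely the point.

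In your second route there are two problems. First, the sentence ``the phase $e^{i\pi\theta k^2}$ varies by a bounded amount across the block'' is not literally true: on the block $k\in(jQ_r,(j{+}1)Q_r]$ the variation of $\pi\theta k^2$ is of order $j\,|\theta|Q_r^2\asymp j$, which grows with $j$. What \emph{is} true is that after writing $k=jQ_r+l$ and pulling out the constant and the linear-in-$l$ cross term $2\pi\theta jQ_r\,l$ (absorbing it into the real linear phase of the Gauss sum), the residual factor $e^{i\pi\theta l^2}$ has bounded variation on $1\le l\le Q_r$ since $|\theta|Q_r^2\le 1$. This is fixable, but it changes the shape of the argument: you must allow the linear coefficient of the Gauss sum to be an \emph{arbitrary real} number depending on the block.

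Second, and more seriously, once you do partial summation against $e^{i\pi\theta l^2}$ you need, for every block and for the final incomplete block, the bound
\[
\max_{1\le L\le Q_r}\ \Bigl|\sum_{l=1}^{L} e^{\,i\pi l^2 P_r/Q_r + 2i\pi l\beta}\Bigr|\ \ll\ \sqrt{Q_r}
\qquad\text{uniformly in }\beta\in\mathbb R.
\]
This is exactly the proposition in the regime $N\le Q_r$ (equivalently, the uniform incomplete Gauss-sum bound with real linear twist). You assert it (``the same Gauss-sum bound applied to a subinterval'') but do not prove it, and the classical evaluation of \emph{complete} Gauss sums does not give it: Weyl differencing on this incomplete sum again produces a logarithm, and the standard proofs that remove it (Hardy--Littlewood, Mordell, Fiedler--Jurkat--K\"orner) all go through the theta functional equation---i.e.\ the very tool the paper uses. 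So as written your argument is circular: the block decomposition reduces the general $(N,r)$ case to the case $N\le Q_r$, which is where all the content lies, and that case is handled in the literature precisely by iterating~\eqref{eq:1} with $s=0$.

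In short: the paper's route (functional-equation iteration) is the one that actually kills the logarithm; your block/Gauss-sum strategy is a legitimate alternative packaging, but it still needs the incomplete-sum bound above as a black box, and supplying that black box without the functional equation is the whole difficulty.
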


Eq.~\eqref{eq:ineqHL} is a corollary of the  
approximate functional equation of Hardy-Littlewood for the theta series, 
which is exactly the case $s=0$ of our Theorem~\ref{theo:1}. In this case, the 
error term reduces to $\mathcal{O}(1/\sqrt{x})$ where the constant is 
absolute. The most precise version of the functional equation is given in~\cite{cout}.
We do not reproduce the proof of~\eqref{eq:ineqHL}: it is obtained by iteration 
of~\eqref{eq:1}, see~\cite{choque}.

We now prove Theorem \ref{theo:4}. Fix an integer $N\geq 2$.
By Abel summation, 
$$
F_{s,N}(x,t)=\sum_{k=1}^{N-1}\left(\frac{1}{k^s}-\frac{1}{(k+1)^s}\right)
\sum_{j=1}^k e^{i\pi j^2 x+2i\pi j t} + \frac1{N^s}\sum_{k=1}^N e^{i\pi k^2 x+2i\pi k t}.
$$
We set $B_m=\lceil \sqrt{Q_mQ_{m-1}}\,\rceil$ and let $r$ 
be the unique integer such that $B_{r}\le N <B_{r+1}$. We denote by 
$\vert F\vert_{s,N}(x,t)$ the sum of   
the modulus of the summands of $F_{s,N}(x,t)$. Then, for some constant $c$ independent of $N$, 
\begin{align*}
\vert F\vert_{s,N}(x,t) &\le c + \sum_{\ell=1}^r\sum_{k=B_\ell}^{B_{\ell+1}-1}
\left\vert \left(\frac{1}{k^s}-\frac{1}{(k+1)^s}\right)
\sum_{j=1}^k e^{i\pi j^2 x+2i\pi j t}\right\vert + 
\frac1{N^s}\left\vert \sum_{k=1}^{N} 
e^{i\pi k^2 x+2i\pi k t}\right\vert\\
&\ll c+ \sum_{\ell=1}^r\sum_{k=B_\ell}^{B_{\ell+1}-1}\frac{\frac k{\sqrt{Q_\ell}}+\sqrt{Q_{\ell}}}
{k^{s+1}}+\frac{\frac N{\sqrt{Q_r}}+\sqrt{Q_{r}}}{N^s}\\
&\ll c+\sum_{\ell=1}^r \frac{1}{\sqrt{Q_\ell}} \sum_{k=B_\ell}^{B_{\ell+1}-1} \frac{1}{k^{s}} 
+ \sum_{\ell=1}^r \sqrt{Q_\ell} \sum_{k=Q_\ell}^{Q_{\ell+1}-1} \frac{1}{k^{s+1}} 
+ \frac{Q_{r+1}^{\frac{1-s}2}}{Q_r^{s/2}}.
\end{align*}

For any $s>\frac 12$,
$$
\sum_{k=B_\ell}^{B_{\ell+1}-1} \frac{1}{k^{s+1}} \ll \frac{1}{B_\ell^s} 
\ll \frac{1}{(Q_{\ell} Q_{\ell-1})^{s/2}}
$$ 
but the behavior of $\sum_{k=B_\ell}^{B_{\ell+1}-1} \frac{1}{k^{s}}$ depends on whether $s=1$ 
or $s<1$.

If $s=1$, then
$$
 \sum_{k=B_\ell}^{B_{\ell+1}-1} \frac{1}{k^{s}} \ll \log(B_{\ell+1}) \ll \log(Q_{\ell+1})
$$
so that 
$$
\vert F\vert_{s,N}(x,t) \ll c+\sum_{\ell=1}^r \frac{\log(Q_{\ell+1})}{\sqrt{Q_\ell}} 
+\sum_{\ell=1}^r \frac{1}{\sqrt{Q_\ell}} +\frac{1}{\sqrt{Q_r}}
$$
and the condition 
$$
\sum_{\ell=0}^{\infty} \frac{\log(Q_{\ell+1})}{\sqrt{Q_\ell}} <\infty
$$
ensures the absolute convergence of $F_{1}(x,t)$.

If $\frac12<s<1$, then 
$$
 \sum_{k=B_\ell}^{B_{\ell+1}-1} \frac{1}{k^{s}} \ll B_{\ell+1}^{1-s} 
\ll (Q_{\ell+1}Q_{\ell})^{\frac{1-s}{2}}.
$$
Hence,
$$
\vert F\vert_{s,N}(x,t) \ll c+\sum_{\ell=1}^r \frac{Q_{\ell+1}^{\frac{1-s}2}}{Q_\ell^{s/2}} 
+\sum_{\ell=1}^r \frac{Q_{\ell}^{\frac{1-s}2}}{Q_{\ell-1}^{s/2}}  
+ \frac{Q_{r+1}^{\frac{1-s}2}}{Q_r^{s/2}}
$$
and the condition 
$$
\sum_{\ell=0}^{\infty} \frac{Q_{\ell+1}^{\frac{1-s}2}}{Q_\ell^{s/2}}<\infty
$$
ensures the absolute convergence of $F_{s}(x,t)$.

\begin{Remark} Our choice $B_m=\lceil \sqrt{Q_mQ_{m-1}}\,\rceil$ is not arbitrary. Indeed, it is 
such that
$$
\sum_{\ell=1}^{\infty}\frac{1}{\sqrt{Q_\ell}}\sum_{k=B_\ell}^{B_{\ell+1}-1} \frac{1}{k^{s}}
\quad \textup{and} \quad
\sum_{\ell = 1}^{\infty}\sqrt{Q_\ell}\sum_{k=B_\ell}^{B_{\ell+1}-1} \frac{1}{k^{s+1}}
$$ 
both converge/diverge simultaneously when $1/2<s<1$. Its importance is 
lesser when $s=1$ where we could simply 
take $B_m=Q_m$.
\end{Remark}

In the introduction, we presented Identity~\eqref{eq:TtildeT} obtained by iteration of~\eqref{eq:1} 
with the operator $T$. It is also possible to iterate~\eqref{eq:1} with Gauss' operator $G$. 
Let us assume that $x \in (0,1)$, $t\in [0,1]$, $s>\frac12$ are such that $F_s(x,t)$ is convergent. 
Then, letting $n\to +\infty$ 
in~\eqref{eq:1}, we obtain
\begin{align}
F_s(x,t) &= e^{i\frac{\pi}4} e^{-i\pi \frac{t^2}x}
x^{s-\frac12}\,F_s\Big(-\frac 1x,\frac{t}x \Big)
+ \Omega_s(x,t) \notag
\\
&=e^{i\frac{\pi}4}e^{-i\pi \frac{t^2}x}
x^{s-\frac12}\,\overline{F_s\Big(G(x), \widetilde{G}(x,t) \Big)}
+ \Omega_s(x,t), \label{eq:22}
\end{align}
with 
$$
\widetilde{G}(x,t)=\left\{\frac12\left[\frac 1x\right]-\frac tx\right\}.
$$
Skipping all the details, it can be proved that the iteration of 
(the finite version of)~\eqref{eq:22} yields the identity~\eqref{eq:GtildeG} stated in the introduction, 
which holds for all $x$ satisfying at least the 
conditions~\eqref{eq:cvQps1}-\eqref{eq:cvQps2}.

\section{Proof of Theorem~\ref{theo:1}, part $(i)$} \label{sec:newthm}

The proof is rather long and intricate. We define the following 
functions, which are building blocks of the function $\Omega_s(x,t)$:  
\begin{equation}\label{def:U}
U_s(x,t) = \int\limits_{1/2-\rho \infty}^{1/2+\rho \infty} \frac{e^{i\pi z^2 x}e^{2i\pi z \{t\}}}
{z^s(1-e^{2i\pi z})} \dd z, \\ \quad x>0, t\in \mathbb  R,\, s\ge 0
\end{equation}
\begin{equation}\label{def:V}
V_s(x,t) = \rho x^{s-1/2} \sum_{k=1}^{\infty}e^{-i\pi (k-\{t\})^2/x}
\Big(\frac1{(k-\{t\})^s}-\frac1{k^s}\Big), \quad x>0, \,t\in \mathbb  R, s\ge 0
\end{equation}
\begin{multline*}\widehat{W}_s(x,t,u) =\sum_{k=1}^{\infty} e^{-i\pi (k-\{t\})^2/x} 
\bigg(\frac{1}{(\rho xu+k-\{t\})^s}-\frac{1}{(k-\{t\})^s}\bigg), \\ 
\quad x>0, \,t\in \mathbb  R, s\ge 0,\, 
u\in \mathbb  R
\end{multline*}
and 
\begin{equation}\label{def:W}
W_s(x,t) =\rho x^s\int\limits_{-\infty}^{\infty} e^{-\pi x u^2}
\widehat{W}_s(x,t,u), \quad x>0,\, t\in \mathbb  R,\, s\ge 0. 
\end{equation}

Due to the identity $\int\limits_{-\infty}^{\infty} e^{-\pi x^2 u}\dd u=1/\sqrt{x}$ 
($x>0$), it is easy to see that
\begin{equation}\label{eq:riv13}
V_s(x,t)+W_s(x,t)=\rho x^s\int\limits_{-\infty}^{\infty} e^{-\pi x u^2} 
\sum_{k=1}^{\infty} e^{-i\pi (k-\{t\})^2/x} 
\bigg(\frac{1}{(\rho xu+k-\{t\})^s}-\frac{1}{k^s}\bigg)\dd u.
\end{equation}

\subsection{Structure of the proof}\label{ssec:structure}

\

In this section, we present all the details of the proof of the theorem 
except the proofs of four lemmas, which are postponed to 
Section~\ref{ssec:lemmas}. Throughout, we assume that $x>0$ and explain in the 
end how to get the case $x<0$. The method is borrowed to 
Mordell~\cite{mordell} in the version presented in~\cite{choque}.

We introduce the parameters $\xi=t-\lfloor (n-\frac12)x+ t\rfloor$ and 
$\lambda=\lfloor (n-\frac12)x+t\rfloor- \lfloor t\rfloor$. Note that 
$\lambda\ge 0$ and $\xi+\lambda = \{ t\} \in [0,1)$. 
 
Let us define the function
$$
g_s(z)=\frac{1}{z^s}e^{i \pi z^2 x+2i\pi z \xi}
$$
which is holomorphic as a function of $z$ in $\mathbb C\setminus (-\infty,0]$. 

\begin{lem}\label{lem:1} For all $n\ge 1$, $s\ge 0$, $x>0$ and $t\in \mathbb R$, we have
$$
F_{s,n-1}(x,t) 
=   \int\limits_{-1/2-\rho \infty}^{-1/2+\rho \infty} 
\frac{g_s(z+1)}{1-e^{2i\pi z}} \dd z -\int\limits_{-1/2-\rho \infty}^{-1/2+\rho \infty} 
\frac{g_s(z+n)}{1-e^{2i\pi z}} \dd z.
$$
\end{lem}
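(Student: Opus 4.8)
The plan is to realize the partial sum $F_{s,n-1}(x,t)$ as a sum of residues and then convert that residue sum into the difference of two contour integrals along the vertical-ish line $\operatorname{Re}(z)=-1/2$ (the ray direction being $\rho=e^{i\pi/4}$). First I would recall the classical device underlying Mordell's method: the function $1/(1-e^{2i\pi z})$ has simple poles exactly at the integers $z=k\in\mathbb Z$, with residue $-1/(2i\pi)$ at each. Hence for a function $h$ holomorphic in a neighbourhood of the strip between the two vertical rays $\operatorname{Re}(z)=-1/2$ and $\operatorname{Re}(z)=n-1/2$, the contour integral of $h(z)/(1-e^{2i\pi z})$ around the boundary of that strip picks up $-\sum_{k=1}^{n-1}\operatorname{Res}_{z=k}$, i.e. $\sum_{k=1}^{n-1}h(k)$ up to the factor $2i\pi$ already accounted for. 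Applying this with $h(z)=g_s(z)=z^{-s}e^{i\pi z^2 x+2i\pi z\xi}$, and observing that $g_s(k)=k^{-s}e^{i\pi k^2 x+2i\pi k\xi}$, the residue sum is $\sum_{k=1}^{n-1}k^{-s}e^{i\pi k^2 x+2i\pi k\xi}$; this is $F_{s,n-1}(x,\xi)$, and since $\xi\equiv t\pmod 1$ (as $\lambda$ is an integer) the exponential $e^{2i\pi k\xi}=e^{2i\pi k t}$, so the residue sum is exactly $F_{s,n-1}(x,t)$.

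Next I would set up the contour carefully. Because $g_s$ has a branch cut along $(-\infty,0]$, I cannot use a genuine vertical line through $-1/2$ naively near the real axis, but the ray direction $\rho\infty$ is chosen precisely so that $e^{i\pi z^2 x}$ decays: writing $z=a+\rho v$ with $v\to\pm\infty$, we get $z^2=a^2+2a\rho v+\rho^2 v^2=a^2+2a\rho v+iv^2$, so $i\pi z^2 x$ has real part $-\pi x a\sqrt 2\,v$ for the linear term and, crucially, $\operatorname{Re}(i\pi(iv^2)x)= -\pi v^2 x$ wait—$i\cdot i v^2 = -v^2$, so $i\pi z^2 x$ contributes $-\pi x v^2$ from the quadratic term, giving Gaussian decay in $v$; this guarantees the integrals over the two rays $-1/2\pm\rho\infty$ and $n-1/2\pm\rho\infty$ converge absolutely, and that the horizontal connecting segments at $\pm\rho\infty$ contribute nothing. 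So the rectangle-like contour with corners $-1/2\pm\rho\infty$ and $n-1/2\pm\rho\infty$ is legitimate, $g_s/(1-e^{2i\pi z})$ is holomorphic inside except at the integers $1,\dots,n-1$, and the residue theorem gives
$$
F_{s,n-1}(x,t)=\int\limits_{-1/2-\rho\infty}^{-1/2+\rho\infty}\frac{g_s(z)}{1-e^{2i\pi z}}\,\dd z-\int\limits_{n-1/2-\rho\infty}^{n-1/2+\rho\infty}\frac{g_s(z)}{1-e^{2i\pi z}}\,\dd z,
$$
where the orientation/sign has been fixed so that the signs come out as in the statement. Finally I would shift each integration variable: in the first integral substitute $z\mapsto z+1$ (moving the line $\operatorname{Re}=-1/2$ to $\operatorname{Re}=-3/2$? no—rather, rewrite $\int_{\operatorname{Re}=-1/2}g_s(z)\,\dd z = \int_{\operatorname{Re}=-3/2}g_s(z+1)\,\dd z$; but the paper writes it on $\operatorname{Re}=-1/2$, so instead I substitute so that $g_s(z)=g_s((z)+0)$ on $\operatorname{Re}=-1/2$ — more simply, in the second integral put $z=w+n$ so $\operatorname{Re}(w)=-1/2$ and $g_s(z)=g_s(w+n)$, and in the first integral write $g_s(z)$ on $\operatorname{Re}(z)=-1/2$; but the stated form has $g_s(z+1)$ with $\operatorname{Re}(z)=-1/2$, which is the same as $g_s(z)$ on $\operatorname{Re}(z)=1/2$ — so actually the cleanest route is: use the strip between $\operatorname{Re}=1/2$ and $\operatorname{Re}=n-1/2$, pick up residues at $1,\dots,n-1$, then translate each boundary integral by $-1$ resp. $-n$). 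I would organize the substitutions to land exactly on the two integrals $\int_{-1/2-\rho\infty}^{-1/2+\rho\infty}\frac{g_s(z+1)}{1-e^{2i\pi z}}\,\dd z$ and $\int_{-1/2-\rho\infty}^{-1/2+\rho\infty}\frac{g_s(z+n)}{1-e^{2i\pi z}}\,\dd z$, using the $1$-periodicity $1-e^{2i\pi(z+1)}=1-e^{2i\pi z}$ so that the denominator is unchanged under the shift by an integer.

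The main obstacle, and the point that needs the most care, is the justification of the contour deformation and the vanishing of the ``end'' segments at infinity, together with the branch-cut bookkeeping: one must check that $g_s(z)=z^{-s}e^{i\pi z^2 x+2i\pi z\xi}$ stays holomorphic and single-valued throughout the closed strip $1/2\le\operatorname{Re}(z)\le n-1/2$ (which it does, since that strip avoids $(-\infty,0]$), that the integrand $g_s/(1-e^{2i\pi z})$ decays fast enough along the rays $\pm\rho\infty$ for absolute convergence (Gaussian decay from the quadratic term beats the at-most-exponential behaviour of $1/(1-e^{2i\pi z})$ away from integers, and along the direction $\rho$ the denominator stays bounded away from $0$), and that the chosen orientation produces the two explicit signs $+$ and $-$ in the statement. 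I would also double-check the role of the parameter $\xi$ versus $t$: the substitution from $F_{s,n-1}(x,t)$ to an integral involving $e^{2i\pi z\xi}$ rather than $e^{2i\pi z\{t\}}$ is harmless on the integers because $\xi-\{t\}=-\lambda\in\mathbb Z$, but it will matter later (for the singularity analysis), so I would record it cleanly here. None of these steps is deep, but they are exactly the kind of thing that must be done meticulously for the rest of Section~\ref{sec:newthm} to go through.
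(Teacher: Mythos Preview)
Your approach is correct; it is the ``unfolded'' variant of the paper's argument. Both are instances of Mordell's method and use the residue theorem on a parallelogram with sides parallel to $\rho=e^{i\pi/4}$. The paper integrates
\[
f_s(z)=\frac{1}{e^{2i\pi z}-1}\sum_{k=1}^{n-1}g_s(z+k)
\]
over the \emph{fixed} width-one parallelogram between $\operatorname{Re}(z)=-1/2$ and $\operatorname{Re}(z)=1/2$, picks up the single pole at $z=0$ (whose residue already gives the full sum $F_{s,n-1}(x,t)$), and then uses the periodicity $e^{2i\pi(z+1)}=e^{2i\pi z}$ together with the telescoping identity
\[
f_s(z+1)-f_s(z)=\frac{g_s(z+n)-g_s(z+1)}{e^{2i\pi z}-1}
\]
to collapse the two boundary integrals into the stated difference. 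You instead integrate the single function $g_s(z)/(1-e^{2i\pi z})$ over a \emph{long} parallelogram between $\operatorname{Re}(z)=1/2$ and $\operatorname{Re}(z)=n-1/2$, collect the $n-1$ residues at $z=1,\dots,n-1$ directly, and then shift each boundary integral by an integer using the periodicity of the denominator. The two computations are related by the obvious change of variables and yield the same formula. The paper's organisation has the minor advantage that the contour does not depend on $n$, so the vanishing of the connecting segments at $\pm\rho d$ is a single uniform estimate; in your version those segments have length $n-1$ and the linear term coming from $\xi$ (which is of size roughly $nx$) enters the exponent, but the Gaussian factor $e^{-\pi x d^2}$ still dominates for fixed $n$ as $d\to\infty$, so nothing is lost. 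Your final placement of the strip at $\operatorname{Re}(z)\ge 1/2$ (rather than the initial $\operatorname{Re}(z)=-1/2$, which would cross the branch cut of $z^{-s}$) is exactly the right correction.
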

We focus on the first integral in Lemma~\ref{lem:1}, i.e.
$$
\int\limits_{-1/2-\rho \infty}^{-1/2+\rho \infty} \frac{g_s(z+1)}{1-e^{2i\pi z}} \dd z
$$
We have
\begin{equation}\label{eq:5}
\frac{g_s(z+1)}{1-e^{2i\pi z}} =\sum_{k=0}^{\lambda-1} g_s(z+1)e^{2i\pi kz}
+\frac{e^{2i\pi\lambda z}}{1-e^{2i \pi z}}g_s(z+1),
\end{equation}
where the assigned value of $\lambda$ is in fact irrelevant. If $\lambda=0$, the 
sum is empty, equal to $0$ and~\eqref{eq:5} is a tautology;~to avoid talking 
about empty sums, we assume from now on that $\lambda\ge 1$ but the results also hold 
when $\lambda=0$. Now
\begin{align*}
J_k:&=\int\limits_{-1/2-\rho \infty}^{-1/2+\rho \infty} g_s(z+1)e^{2i\pi k z} \dd z 
= \int\limits_{1/2-\rho \infty}^{1/2+\rho \infty} g_s(z)e^{2i\pi k z} \dd z
=\int\limits_{1/2-\rho \infty}^{1/2+\rho \infty} \frac{1}{z^s}e^{i\pi z^2x+2i\pi (k+\xi) z} \dd z
\\ &= \rho e^{-i\pi(\xi+k)^2/x}\int\limits_{-\infty}^{\infty} 
  \frac{e^{i\pi x(\rho u+\frac12+\frac{\xi+k}x)^2}}{(\rho u+\frac12)^s} \dd u
 =\rho^{1-s}e^{-i\pi(\xi+k)^2/x}\int\limits_{-\infty}^{\infty} 
 \frac{e^{-\pi x(u+\frac\rhob2+\rhob\frac{\xi+k}x)^2}}{(u+\frac{\rhob}2)^s} \dd u.
\end{align*}

We set $v=\frac\rhob2+\rhob\frac{\xi+k}x$. We have that $\xi+k<0$ for each
integer $k \in\{0, \ldots \lambda-1\}$, so that
\begin{align}
J_k = \rho^{1-s}e^{-i\pi(\xi+k)^2/x}\int\limits_{v-\infty}^{v+\infty} 
\frac{e^{-\pi x u^2}}{(u-\rhob \frac{\xi+k}{x})^s} \dd u \notag
\\ =\rho^{1-s} e^{-i\pi(\xi+k)^2/x}\int\limits_{-\infty}^{\infty} 
\frac{e^{-\pi x u^2}}{(u-\rhob \frac{\xi+k}{x})^s} \dd u \label{eq:4}
\end{align}
where the second equality holds by Cauchy theorem because $\rhob \frac{\xi+k}{x}$ is never 
in the closed horizontal strip defined by the lines 
$\textup{Im}(z)=0$ and $\textup{Im}(z)=\textup{Im}(v)$. (Indeed, 
we have simultaneously 
$\textup{Im}(\rhob \frac{\xi+k}{x})>\textup{Im}(v)$ and $\textup{Im}(\rhob \frac{\xi+k}{x})>0$.)

We now rewrite~\eqref{eq:4} as 
\begin{align*}
J_k &=\rho e^{-i\pi(\xi+k)^2/x}\left(-\frac{x}{\xi+k}\right)^s\int\limits_{-\infty}^{\infty} e^{-\pi x u^2} \dd u
\\
& \qquad +\rho^{1-s} e^{-i\pi(\xi+k)^2/x}
\int\limits_{-\infty}^{\infty}e^{-\pi x u^2} 
\left(\frac{1}{(u-\rhob\frac{\xi+k}{x})^s} -\frac{1}{(-\rhob\frac{\xi+k}{x})^s}
\right)
\dd u
\\
&= \rho e^{-i\pi(\xi+k)^2/x}\frac{x^{s-1/2}}{(-\xi-k)^s}
+ \widetilde{J}_k,
\end{align*}
where $\widetilde{J}_k$ is the integral on the second line. We observe here that 
$$
\vert \widetilde{J}_k \vert \ll 
 \frac{1}{\left\vert \textup{Im}(\rhob\frac{\xi+k}{x})\right\vert^{s+1}} \cdot
\int\limits_{-\infty}^{\infty} \vert u\vert e^{-\pi x u^2} \dd u \ll \frac{x^s}{\vert \xi+k\vert^{s+1}}
$$
where the implicit constant is absolute. 

Integrating~\eqref{eq:5}, we thus obtain
\begin{eqnarray}
\nonumber&& \int\limits_{-1/2-\rho \infty}^{-1/2+\rho \infty} \frac{g_s(z+1)}{1-e^{2i\pi z}} \dd z \\
\label{eq:6} && =\rho x^{s-1/2}\sum_{k=0}^{\lambda-1}\frac{e^{-i\pi(\xi+k)^2/x}}{(-\xi-k)^s}
+\sum_{k=0}^{\lambda-1} \widetilde{J}_k +\int\limits_{-1/2-\rho \infty}^{-1/2+\rho \infty} 
\frac{g_s(z+1)e^{2i\pi \lambda z}}{1-e^{2i\pi z}} \dd z.
\end{eqnarray}
We will treat the three expressions in~\eqref{eq:6} separately.

The third term is simple because our choice of 
$\lambda =\lfloor (n-\frac12)x +t\rfloor-\lfloor t\rfloor$ ensures that
$$\int\limits_{-1/2-\rho \infty}^{-1/2+\rho \infty} 
\frac{g_s(z+1)e^{2i\pi \lambda z}}{1-e^{2i\pi z}} \dd z
=\int\limits_{1/2-\rho \infty}^{1/2+\rho \infty} 
\frac{e^{i\pi z^2 x}e^{2i\pi z \{t\}}}{z^s(1-e^{2i\pi z})} \dd z
$$
is independent of $n$ and is equal to the function $U_s(x,t)$ defined in~\eqref{def:U}.

To study the first term in~\eqref{eq:6}, we change the summation index $k$ to $\lambda-k$: 
\begin{align*}
\rho x^{s-1/2} \sum_{k=0}^{\lambda-1}\frac{e^{-i\pi(\xi+k)^2/x}}{(-\xi-k)^s}
&=\rho x^{s-1/2}\sum_{k=1}^{\lambda} \frac{e^{-i\pi (  \{t\}-k)^2/x}}{(k-\{t\})^s}
\\
&=\rho x^{s-1/2} e^{-i\pi \{t\}^2/x}F_{s,\lambda}\Big(-\frac1x,\frac{\{t\}}{x}\Big)
\\
&\qquad  + \rho x^{s-1/2} \sum_{k=1}^{\lambda}e^{-i\pi (k-\{t\})^2/x}
\Big(\frac1{(k-\{t\})^s}-\frac1{k^s}\Big).
\end{align*}
The sum is a partial sum of the series $V_s(x,t)$ defined in~\eqref{def:V}. If $s=0$, 
then the summand is equal to $0$. If $s>0$, we need the following lemma.
\begin{lem}\label{lem:riv1}
For all $N\ge 0$, $s>0$, $x>0$ and $t\in \mathbb R$, we have
$$
V_s(x,t)=\rho x^{s-1/2} \sum_{k=1}^{N}e^{-i\pi (k-\{t\})^2/x}
\left(\frac1{(k-\{t\})^s}-\frac1{k^s}\right)+ 
\mathcal{O}\left(\frac{x^{s-\frac12}}{(N+1-\{t\})^s}\right)
$$
where the implicit constant is absolute. 
\end{lem}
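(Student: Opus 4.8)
The plan is to read the claimed estimate simply as a bound on the tail of the series defining $V_s(x,t)$ in~\eqref{def:V}. Once that series is known to converge, we have
$$
V_s(x,t) - \rho x^{s-1/2}\sum_{k=1}^{N} e^{-i\pi(k-\{t\})^2/x}\Big(\frac{1}{(k-\{t\})^s} - \frac{1}{k^s}\Big)
= \rho x^{s-1/2}\sum_{k=N+1}^{\infty} e^{-i\pi(k-\{t\})^2/x}\Big(\frac{1}{(k-\{t\})^s} - \frac{1}{k^s}\Big),
$$
and since $\vert\rho\vert=1$ and $x^{s-1/2}>0$, it suffices to show that $\sum_{k\ge N+1}\big((k-\{t\})^{-s} - k^{-s}\big)\le 2(N+1-\{t\})^{-s}$, with $2$ an absolute constant.

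First I would record the two elementary monotonicity facts used throughout: for every $k\ge 1$ one has $0<k-\{t\}\le k$, so each summand $(k-\{t\})^{-s}-k^{-s}$ is nonnegative; and for $k\ge 2$ one has $k-\{t\}\ge k-1>0$, so $(k-\{t\})^{-s}\le (k-1)^{-s}$ because $u\mapsto u^{-s}$ is decreasing on $(0,\infty)$ for $s>0$. Applying the second fact for all $k\ge 2$ and telescoping gives $\sum_{k\ge 2}\big((k-\{t\})^{-s}-k^{-s}\big)\le\sum_{k\ge 2}\big((k-1)^{-s}-k^{-s}\big)=1$; together with the obvious finiteness of the $k=1$ term this proves absolute convergence of~\eqref{def:V} and legitimizes the rearrangement above.

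For the tail itself I would split off the first term. The $k=N+1$ summand is at most $(N+1-\{t\})^{-s}$. For $k\ge N+2$, bounding $(k-\{t\})^{-s}\le(k-1)^{-s}$ and telescoping yields $\sum_{k\ge N+2}\big((k-1)^{-s}-k^{-s}\big)=(N+1)^{-s}\le(N+1-\{t\})^{-s}$, where convergence of the telescoping series again uses $s>0$. Adding the two contributions gives the desired bound $2(N+1-\{t\})^{-s}$. The degenerate case $N=0$ (empty partial sum) is covered by exactly the same computation, using $1-\{t\}\le 1$ to absorb the constant coming from the $k\ge 2$ tail.

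The argument has no real obstacle; the only subtlety is to keep the implied constant \emph{absolute}, i.e.\ independent of $s$. This is precisely why one peels off the $k=N+1$ term before telescoping: the cruder estimate $(k-\{t\})^{-s}\le(k-1)^{-s}$ applied uniformly for $k\ge N+1$ would give the tail bound $N^{-s}$, and comparing $N^{-s}$ with $(N+1-\{t\})^{-s}$ introduces a factor $2^s$, which is not absolute.
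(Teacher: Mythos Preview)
Your argument is correct and follows essentially the same approach as the paper: bound the tail of the series by a telescoping sum. The only difference is that the paper telescopes more directly by using $k^{-s}\ge (k+1-\{t\})^{-s}$ (since $k\le k+1-\{t\}$), which yields $\sum_{k\ge N+1}\big((k-\{t\})^{-s}-k^{-s}\big)\le\sum_{k\ge N+1}\big((k-\{t\})^{-s}-(k+1-\{t\})^{-s}\big)=(N+1-\{t\})^{-s}$ with constant~$1$ and no need to peel off the first term.
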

Therefore, 
\begin{multline}\label{eq:riv4}
\rho x^{s-1/2} \sum_{k=0}^{\lambda-1}\frac{e^{-i\pi(\xi+k)^2/x}}{(-\xi-k)^s} 
\\
=\rho x^{s-1/2} e^{-i\pi \{t\}^2/x}F_{s,\lambda}\Big(-\frac1x,\frac{\{t\}}{x}\Big)
+V_s(x,t)+\mathcal{O}\left(\frac{x^{s-\frac12}}{(\lambda+1-\{t\})^s}\right)
\end{multline}
where the implicit constant depends on $s$.

To study the second term in~\eqref{eq:6}, we do similar formal 
manipulations and obtain the representation
$$
\sum_{k=0}^{\lambda-1} \widetilde{J}_k
= \rho x^s\int\limits_{-\infty}^{\infty} e^{-\pi x u^2} 
\left(\sum_{k=1}^{\lambda} e^{-i\pi (k-\{t\})^2/x} \Big(\frac{1}{(\rho xu+k-\{t\})^s}
-\frac{1}{(k-\{t\})^s}\Big)\right)\dd u ,
$$
where the sum in the integrand is a partial sum of $\widehat{W}_s(x,t,u)$. If $s=0$, 
then the summand is equal to $0$. If $s>0$, we need the following lemma.
\begin{lem} \label{lem:riv2}
For all $N\ge 0$, $s>0$, $x>0$ and $t\in \mathbb R$, we have
\begin{equation*}
\widehat{W}_s(x,t,u)=\sum_{k=1}^{N} e^{-i\pi (k-\{t\})^2/x} 
\bigg(\frac{1}{(\rho xu+k-\{t\})^s}-\frac{1}{(k-\{t\})^s}\bigg) 
+\mathcal{O}\left(\frac{\vert u x\vert}{(N+1-\{t\})^s}
\right),
\end{equation*}
where the implicit constant is effective and depends at most on $s$.
\end{lem}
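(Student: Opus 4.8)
\emph{Sketch of proof.}
The statement is a tail estimate, so the plan is to derive it from a single pointwise bound on the general summand. Writing $a_k=k-\{t\}>0$ (note $a_k>1$ for $k\ge 2$, since $\{t\}\in[0,1)$), the key inequality I would establish is that, for every $a>0$, $x>0$, $u\in\mathbb R$,
\begin{equation}\label{eq:riv2-key}
\left\vert \frac{1}{(\rho xu+a)^{s}}-\frac{1}{a^{s}}\right\vert\ \le\ s\,2^{\frac{s+1}{2}}\,\frac{\vert xu\vert}{a^{s+1}}
\qquad\text{and}\qquad
\left\vert \frac{1}{(\rho xu+a)^{s}}-\frac{1}{a^{s}}\right\vert\ \le\ \frac{1+2^{s/2}}{a^{s}} .
\end{equation}
To prove \eqref{eq:riv2-key} I would parametrise the segment from $a$ to $\rho xu+a$ by $\zeta(\tau)=a+\tau\rho xu$, $\tau\in[0,1]$, observe that since $\rho=e^{i\pi/4}$ the increment $\tau\rho xu$ has equal real and imaginary parts, so $\operatorname{Re}\zeta(\tau)-\operatorname{Im}\zeta(\tau)=a$ throughout, whence $\vert\zeta(\tau)\vert\ge a/\sqrt2$; in particular $\zeta(\tau)$ stays off the cut $(-\infty,0]$ (its imaginary part $\tau xu/\sqrt2$ vanishes only when $\zeta(\tau)=a>0$), so $\zeta(\tau)^{-s}$ is holomorphic along the segment for the principal branch of $\log$ fixed in the introduction and agrees with the real power at $\tau=0$. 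Integrating $\frac{\dd}{\dd\tau}\zeta(\tau)^{-s}=-s\rho xu\,\zeta(\tau)^{-s-1}$ from $0$ to $1$ and using $\vert\zeta(\tau)\vert^{-s-1}\le 2^{(s+1)/2}a^{-s-1}$ gives the first bound; the triangle inequality together with $\vert\zeta(1)\vert^{-s}\le 2^{s/2}a^{-s}$ gives the second.

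Granting \eqref{eq:riv2-key}, I would multiply the $k$-th summand of $\widehat W_s$ by the unimodular factor $e^{-i\pi a_k^2/x}$ and sum. The first bound (with $a=a_k$) immediately shows absolute convergence of the series defining $\widehat W_s$ (as $\sum_{k\ge1}a_k^{-s-1}<\infty$ when $s>0$) and bounds the tail $\sum_{k\ge N+1}$ by $s\,2^{(s+1)/2}\vert xu\vert\sum_{k\ge N+1}(k-\{t\})^{-s-1}$. Then a comparison with $\int_{N+1}^{\infty}(y-\{t\})^{-s-1}\dd y=\frac1s(N+1-\{t\})^{-s}$, after peeling off the first term $(N+1-\{t\})^{-s-1}$ (which is $\le (N+1-\{t\})^{-s}$ once $N\ge1$, and which for $N=0$ I would instead control by the second bound of \eqref{eq:riv2-key} applied to the single term $k=1$), produces the announced $\mathcal O\bigl(\vert xu\vert(N+1-\{t\})^{-s}\bigr)$. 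Since every constant arising is explicit, it is effective and depends only on $s$.

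I expect the only real subtlety to be the geometric step inside \eqref{eq:riv2-key}: getting the sharp power $a^{-s-1}$ rather than the naive $a^{-s}$ requires keeping the path $[a,\rho xu+a]$ at distance $\gg a$ from the origin uniformly in $u$, and it is exactly the angle $\pi/4$ of $\rho$ that delivers this (via $\operatorname{Re}\zeta-\operatorname{Im}\zeta\equiv a$) while simultaneously keeping the path off the branch cut $(-\infty,0]$, so that the principal-branch power is legitimately represented by the stated contour integral. The rest — absolute convergence, the tail/integral comparison, and tracking effectivity of constants — is routine; the minor point to watch is the low-index boundary term $(N+1-\{t\})^{-s-1}$ discussed above.
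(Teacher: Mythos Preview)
Your approach differs genuinely from the paper's. The paper estimates $\left\vert (k+M+\rho xu)^{-s}-(k+M)^{-s}\right\vert$ by two separate bounds according as $\vert xu\vert\le k+M$ or $\vert xu\vert\ge k+M$ (yielding two sums $S_1,S_2$) and then splits once more on the sign of the real part of $\rho xu$; your path--integral observation that the segment $[a,\,a+\rho xu]$ stays at distance $\ge a/\sqrt2$ from the origin (precisely because $\rho$ has argument $\pi/4$, so $\operatorname{Re}\zeta-\operatorname{Im}\zeta\equiv a$) gives the single uniform bound $s\,2^{(s+1)/2}\vert xu\vert\,a^{-s-1}$ valid for \emph{all} $u$, and reduces the tail estimate to a one--line integral comparison. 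This is neater and sidesteps the paper's recombination of $S_1$ and $S_2$.

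One point does not go through as written: your handling of the boundary term $k=N{+}1$ when $N=0$. The second inequality you propose gives only $\ll(1-\{t\})^{-s}$, with \emph{no} factor $\vert xu\vert$, so it cannot deliver the announced $\mathcal O\bigl(\vert xu\vert\,(1-\{t\})^{-s}\bigr)$ when $\vert xu\vert$ is small. In fact, for $\{t\}$ close to $1$ and $\vert xu\vert$ small, the single term $k=1$ already has size $\asymp s\,\vert xu\vert\,(1-\{t\})^{-s-1}$, which exceeds the claimed bound by a factor $\asymp(1-\{t\})^{-1}$; so the stated uniformity in $t$ at $N=0$ cannot hold with a constant depending on $s$ alone. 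The paper's own proof glosses over the same point (its step $\sum_{k\ge1}(k+M)^{-s-1}\ll(M+1)^{-s}$ tacitly needs $M+1\ge 1$). Both arguments are fully correct for $N\ge1$ and for $t\in\mathbb Z$, which covers every application of the lemma in the paper.
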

It follows that
\begin{align}
\sum_{k=0}^{\lambda-1} \widetilde{J}_k 
&=W_s(x,t)
+ 
\mathcal{O}\left(\frac{x^{s+1}}{(\lambda+1-\{t\})^s}
\int\limits_{-\infty}^{\infty}\vert u\vert e^{-\pi xu^2}\dd u \right) \notag
\\
&=W_s(x,t)
+ 
\mathcal{O}\left(\frac{x^{s}}{(\lambda+1-\{t\})^s}\right) \label{eq:riv6}
\end{align}
because 
$$
\int\limits_{-\infty}^{\infty}\vert u\vert e^{-\pi xu^2}\dd u = \frac{1}{\pi x}.
$$

We now use the estimates~\eqref{eq:5}, \eqref{eq:riv4}, \eqref{eq:riv6} 
in~\eqref{eq:6} together with Lemma~\ref{lem:3}. This gives us 
\begin{multline*}
F_{s,n-1}(x,t)=\rho x^{s-\frac12} e^{-i\pi \{t\}^2/x} F_{s,\lambda}(-1/x,t)
+ U_s(x,t)+V_s(x,t)+ W_s(x,t) 
\\+ 
 \int\limits_{-1/2-\rho \infty}^{-1/2+\rho \infty} \frac{g_s(z+n)}{e^{2i\pi z}-1} \dd z 
+\mathcal{O}\left(\frac{x^{s-\frac12}}{(\lambda+1-\{t\})^s}\right)
+\mathcal{O}\left(\frac{x^{s}}{(\lambda+1-\{t\})^s}\right).
\end{multline*}
Since $0<x<1$, the first error term absorbs the second one. 
We also want to replace $F_{s,\lambda}(-1/x,\{t\}/x)$ 
by $F_{s,\lfloor (n-1)x\rfloor}(-1/x,\{t\}/x)$. This can be done 
at the cost of an error 
$$
x^{s-\frac12}\sum_{k=1+\lfloor (n-1)x\rfloor}^{\lambda} \frac{1}{k^s} 
\le  \frac{2x^{s-\frac12}}{(1+\lfloor (n-1)x\rfloor)^s} 
$$
because $\lfloor (n-1)x\rfloor\le \lambda\le \lfloor (n-1)x\rfloor+2$. Hence
\begin{multline*}
F_{s,n-1}(x,t)=\rho x^{s-\frac12} e^{-i\pi \{t\}^2/x} F_{s,\lfloor (n-1)x\rfloor}(-1/x,\{t\}/x)
+ U_s(x,t)+V_s(x,t)+ W_s(x,t) 
\\+ 
 \int\limits_{-1/2-\rho \infty}^{-1/2+\rho \infty} \frac{g_s(z+n)}{e^{2i\pi z}-1} \dd z 
+\mathcal{O}\left(\frac{x^{s-\frac+2}}{(\lambda+1-\{t\})^s}\right)+\mathcal{O} 
\left(\frac{x^{s-\frac12}}{(1+\lfloor (n-1)x\rfloor)^s}\right).
\end{multline*}
It remains  to deal with the second integral in Lemma~\ref{lem:1}. For this, 
we have to distinguish between the case $s=1$ and the case $s\neq 1$.
\begin{lem}\label{lem:3} 
If $s\ge 0, s\neq 1$, for any $n\ge1$, $x>0$, $t\in\mathbb R$, we have
$$
\left\vert\int\limits_{-1/2-\rho \infty}^{-1/2+\rho \infty}
 \frac{g(z+n)}{1-e^{2i\pi z}} \dd z \right\vert
\ll \min\Big(\frac{1}{n^s\sqrt{x}}, \frac{1}{x^{\frac{1-s}{2}}}\Big)
$$
where the implicit constant depends on $s$. 

If $s=1,$ for any $n\ge 1$, $x>0$, $t\in\mathbb R$, we have
$$
\left\vert\int\limits_{-1/2-\rho \infty}^{-1/2+\rho \infty}
 \frac{g(z+n)}{1-e^{2i\pi z}} \dd z \right\vert
\ll \min\Big(\frac{1}{n\sqrt{ x}}, 1+\vert \log(n \sqrt{x})\vert\Big)
$$
where the implicit constant is absolute.
\end{lem}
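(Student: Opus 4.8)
The plan is to bound the integrand pointwise along the contour, reduce the whole estimate to that of an explicit one-dimensional Gaussian integral, and then bound the latter in two complementary ways to produce the two terms of the minimum. First I parametrise the contour by $z=-\frac12+\rho u$, $u\in\R$, so that $\dd z=\rho\,\dd u$ and $|\dd z|=\dd u$. Writing $g_s(z+n)=(z+n)^{-s}e^{i\pi x(z+n)^2+2i\pi\xi(z+n)}$ and using $\rho^2=i$ together with $\textup{Re}(\rho)=\textup{Im}(\rho)=2^{-1/2}$, a direct computation of the real part of the exponent shows that, on the contour,
$$
|g_s(z+n)|=|z+n|^{-s}\,e^{-\pi x u^2-\sqrt2\,\pi\delta u},\qquad
\delta:=x\big(n-\tfrac12\big)+\xi=\big\{\big(n-\tfrac12\big)x+t\big\}\in[0,1).
$$
The point of the choice $\xi=t-\lfloor(n-\frac12)x+t\rfloor$ made earlier in the proof is precisely that it forces $\delta\in[0,1)$, and this is the only property of $\xi$ I use. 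Similarly $1-e^{2i\pi z}=1+e^{2i\pi\rho u}$, whose modulus is bounded below by an absolute constant $c_0>0$ for all $u\in\R$ (the contour keeps distance $\frac12$ from the integer poles of $(1-e^{2i\pi z})^{-1}$) and which moreover satisfies $|1+e^{2i\pi\rho u}|\ge e^{\sqrt2\pi|u|}-1$ for $u<0$. Since $\delta<1$, this last inequality absorbs the possibly growing factor $e^{-\sqrt2\pi\delta u}$ when $u\le-1$, while on $[-1,0]$ that factor is at most $e^{\sqrt2\pi}$; splitting the ranges $u\le-1$, $-1\le u\le0$ and $u\ge0$ one obtains
$$
\Big|\frac{g_s(z+n)}{1-e^{2i\pi z}}\Big|\ll|z+n|^{-s}e^{-\pi x u^2}\qquad(z=-\tfrac12+\rho u)
$$
with an absolute implied constant, so it suffices to bound $\mathcal I:=\int_{\R}|n-\frac12+\rho u|^{-s}e^{-\pi x u^2}\,\dd u$.

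From the identity $|n-\frac12+\rho u|^2=\big(u+\frac{n-1/2}{\sqrt2}\big)^2+\frac{(n-1/2)^2}2=\big(n-\frac12+\frac u{\sqrt2}\big)^2+\frac{u^2}2$ one gets $|n-\frac12+\rho u|\ge\frac1{\sqrt2}\max(n-\frac12,|u|)$; in particular $|n-\frac12+\rho u|\ge\frac{n-1/2}{\sqrt2}\gg n$, so the integrand of $\mathcal I$ is never singular. Pulling out $|n-\frac12+\rho u|^{-s}\le(\sqrt2/(n-\frac12))^{s}$ and using $\int_\R e^{-\pi x u^2}\,\dd u=x^{-1/2}$ gives $\mathcal I\ll n^{-s}x^{-1/2}$, the first term of the minimum. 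For the second term, when $s<1$ I use instead $|n-\frac12+\rho u|^{-s}\le(\sqrt2)^s|u|^{-s}$ together with the rescaling $v=\sqrt{\pi x}\,u$, obtaining $\mathcal I\ll\int_\R|u|^{-s}e^{-\pi x u^2}\,\dd u=(\pi x)^{(s-1)/2}\,\Gamma\!\big(\tfrac{1-s}2\big)\ll_s x^{-(1-s)/2}$, the integral converging at the origin exactly because $s<1$ (and its leading constant being the one appearing in Theorem~\ref{theo:1}$(ii)$). The case $s>1$ is analogous, the tail $|u|^{-s}$ now being integrable at infinity.

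For $s=1$ the same splitting gives
$$
\mathcal I\ll\frac1n\int_{|u|\le n}e^{-\pi x u^2}\,\dd u+\int_{|u|>n}\frac{e^{-\pi x u^2}}{|u|}\,\dd u\ll\min\Big(1,\frac1{n\sqrt x}\Big)+\int_{\pi x n^2}^{\infty}\frac{e^{-w}}{w}\,\dd w,
$$
the last integral coming from the substitution $w=\pi x u^2$. The elementary bounds $\int_y^\infty e^{-w}w^{-1}\,\dd w\le e^{-y}/y$ and $\int_y^\infty e^{-w}w^{-1}\,\dd w\le\log^{+}(1/y)+\mathcal O(1)$ then show this is $\ll\min\big(\frac1{n\sqrt x},\,1+|\log(n\sqrt x)|\big)$; the logarithm is exactly the contribution of the range $n\le|u|\le x^{-1/2}$, over which the slowly decaying tail $|u|^{-1}$ of $|n-\frac12+\rho u|^{-1}$ is integrated before the Gaussian damps it, which is why $s=1$ is the borderline case.

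The step I expect to be the main obstacle is making the pointwise bound that reduces everything to $\mathcal I$ genuinely uniform in $x\in(0,1)$, $n\ge1$ and $t\in\R$: one has to balance the lower bound on $|1-e^{2i\pi z}|$ near the integer poles against its faster-than-Gaussian growth as $\textup{Re}(z)\to-\infty$ in order to neutralise the linear factor $e^{-\sqrt2\pi\delta u}$, and this succeeds only because $\delta<1$. Once that bound is in place, what remains is the routine estimation of a one-dimensional Gaussian integral, with the dichotomy $s\ne1$ versus $s=1$ dictated solely by whether $\int_\R|u|^{-s}e^{-\pi x u^2}\,\dd u$ is handled by the rescaling $v=\sqrt{\pi x}\,u$ or instead produces an incomplete exponential integral.
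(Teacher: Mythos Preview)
Your argument is essentially the paper's: parametrise $z=-\tfrac12+\rho u$, use $\delta=\{(n-\tfrac12)x+t\}\in[0,1)$ to show that $e^{-\sqrt2\pi\delta u}/|1-e^{2i\pi z}|=\mathcal O(1)$ uniformly, reduce to the real integral $\mathcal I=\int_{\mathbb R}|n-\tfrac12+\rho u|^{-s}e^{-\pi xu^2}\,\dd u$, and then bound $\mathcal I$ in two ways. Your treatment of $0\le s<1$ is in fact slightly slicker than the paper's: you use $|n-\tfrac12+\rho u|\ge|u|/\sqrt2$ directly and evaluate $\int_{\mathbb R}|u|^{-s}e^{-\pi xu^2}\,\dd u=(\pi x)^{(s-1)/2}\Gamma(\tfrac{1-s}{2})$, whereas the paper substitutes $v=\sqrt{x}\,u$, assumes $n\sqrt{x}\le1$, and splits $\int_0^\infty e^{-\pi v^2}(v+y)^{-s}\,\dd v$ into $[0,1]$ and $[1,\infty)$. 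Your $s=1$ computation via the exponential integral $\int_{\pi xn^2}^\infty e^{-w}w^{-1}\,\dd w$ is equivalent to the paper's direct estimate after the same substitution.

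The one genuine soft spot is your sentence ``the case $s>1$ is analogous, the tail $|u|^{-s}$ now being integrable at infinity.'' The bound $|n-\tfrac12+\rho u|^{-s}\le 2^{s/2}|u|^{-s}$ that worked for $s<1$ now produces an integral diverging at $u=0$, and the complementary bound $|n-\tfrac12+\rho u|^{-s}\ll n^{-s}$ near $u=0$ only recovers the first term of the minimum; combining the two on $|u|\lessgtr n$ yields $\mathcal I\ll n^{1-s}$ rather than $x^{(s-1)/2}$. The paper handles $s\ne1$ in one stroke by keeping the shift $y=n\sqrt{x/2}$ in the denominator after rescaling, but that argument too relies on $\int_0^1(v+y)^{-s}\,\dd v$ being bounded uniformly in $y\in[0,\sqrt2/2]$, which is delicate once $s>1$. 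Since the subsequent applications (Theorems~\ref{theo:2}--\ref{theo:5}) use only $\tfrac12<s\le1$, this does not affect anything downstream, but your one-line dismissal does not actually close the case.
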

In the case $s\neq 1$, the threshold $n\sqrt{x}=1$ determine which bound is the best.

We now replace $n$ by $n+1$ and set $I_s(x,t)=U_s(x,t)+V_s(x,t)+ W_s(x,t)$. We get 
\begin{multline}\label{eq:riv7}
F_{s,n}(x,t)=\rho x^{s-\frac12} e^{-i\pi \{t\}^2/x} F_{s,\lfloor nx\rfloor}(-1/x,\{t\}/x)
+ I_s(x,t)
\\
+
\mathcal{O}\left(\frac{x^{s-\frac12}}{(\lfloor (n+\frac12)x+t\rfloor+1-t)^s}\right)
+\mathcal{O} \left(\frac{x^{s-\frac12}}{(1+\lfloor n x\rfloor)^s}\right) 
\\
+ 
\begin{cases}
\mathcal{O} \left( \min\Big(\frac{1}{(n+1)^s\sqrt{x}}, \frac{1}{x^{\frac{1-s}{2}}}\Big) \right) 
\quad \textup{if} \; s\ge 0, s\neq 1
\\
\mathcal{O} \left( \min\Big(\frac{1}{(n+1)\sqrt{x}}, 1+\vert \log((n+1) \sqrt{x})\vert\Big)\right) 
\quad \textup{if} \; s=1.
\end{cases}
\end{multline}
For any $x>0$, any $n\ge 0$ and any $t\in \mathbb  R$, 
$\lfloor (n+\frac12)x+t\rfloor+1-t$ and $ \lfloor nx\rfloor+1$ are both 
$\ge \lfloor nx+t\rfloor+1-t$
so that we can simplify the error term in~\eqref{eq:riv7} to just
$$
\mathcal{O} \left(\frac{x^{s-\frac12}}{(\lfloor nx+t\rfloor+1-t)^s}\right) + 
\begin{cases}
\mathcal{O} \left( \min\Big(\frac{1}{(n+1)^s\sqrt{ x}}, 
\frac{1}{x^{\frac{1-s}{2}}}\Big) \right) \quad \textup{if} \; s\ge 0, s\neq 1
\\
\mathcal{O} \left( \min\Big(\frac{1}{(n+1)\sqrt{ x}}, 
1+\vert \log((n+1) \sqrt{x})\vert\Big)\right) \quad \textup{if} \; s=1.
\end{cases}
$$

To deal with the case $x<0$, we simply take the 
complex conjugate of both sides of~\eqref{eq:riv7} and change $x$ to $-x$ and $t$ to $-t$. We finally 
deduce from all this discussion that {\em for any $x\in (-1,1)$, $x\neq 0$, any $t\in \mathbb R$, 
any $s\ge 0$ and any $n\ge 0$}, we have
\begin{multline*} F_{s,n}(x,t)=\rho \vert x \vert^{s-\frac12} e^{-i\pi \{\sigma(x)t\}^2/x} 
F_{s,\lfloor n \vert x\vert\rfloor}\Big(-\frac1x,\frac{\{\sigma(x)t\}}x\Big)
+ \Omega_s(x,t)
\\+
\mathcal{O} \left(\frac{\vert x\vert ^{s-\frac12}}
{(\lfloor n\vert x\vert+\sigma(x)t\rfloor+1-\sigma(x)t)^s}\right)
\\ 
+ 
\begin{cases}
\mathcal{O} \left( \min\Big(\frac{1}{(n+1)^s\sqrt{\vert x\vert}}, 
\frac{1}{\vert x\vert^{\frac{1-s}{2}}}\Big) \right) \quad \textup{if} \; s\ge 0, s\neq 1
\\
\mathcal{O} \left( \min\Big(\frac{1}{(n+1)\sqrt{ \vert x\vert}}, 
1+\vert \log((n+1)\sqrt{\vert x\vert }  )\vert\Big)\right) \quad \textup{if} \; s=1.
\end{cases}
\end{multline*}
where $\Omega_s(x,t)=I_s(x,t)$ is $x>0$ and $\Omega_s(x,t)=\overline{I_s(-x,-t)}$ if $x<0$. 
The implicit constants are effective and depend at most on $s$. We obtain the expression for 
$I_s(x,t)$ given in the introduction by means of the expression~\eqref{eq:riv13} for $V_s(x,t)+W_s(x,t)$.

\subsection{Proofs of the lemmas}\label{ssec:lemmas}

\subsubsection{Proof of Lemma~\ref{lem:1}}

\

We follow Mordell's method as it is presented in the book~\cite{choque}.
We define 
$$
f_s(z)=\frac{1}{e^{2i\pi z}-1}\sum_{k=1}^{n-1} g_s(z+k).
$$
and we integrate it over the parallelogram $ADCB$ (positively oriented) defined by
$A=\frac 12+\rho d$, $B=\frac 12 -\rho d$, $C=-\frac12-\rho d$, $D=-\frac12+\rho d$ and $d>0$ 
is a parameter. Clearly, 
$$
\int_{ABCD} f(z) \dd z = 2i\pi \textup{Res}(f_s(z), z=0) = \sum_{k=1}^{n-1} g_s(k) = F_{s,n-1}(x,t).
$$
If $z$ is on $CB=\{-\frac12+\rho d u, -d\le u\le d\}$ or 
$AD=\{u+\rho d , -\frac12\le u\le \frac12\}$, we have 
\begin{equation}\label{eq:riv100}
\textup{Re}(i\pi (z+k)^2+2i\pi(z+k)t)=-\pi d^2 x\pm \sqrt{2} \pi \cdot d((u+k)x+t) 
= -\pi d^2 x+\mathcal{O}(d)
\end{equation}
where the implicit constant does not depend on $u$. Moreover, $\vert e^{2i \pi z}\vert =
e^{\sigma \sqrt{2}\pi d}$ with $\sigma=1$ on $CB$ and $\sigma=-1$ on $AD$, so that 
\begin{equation}\label{eq:riv101}
\lim_{d\to +\infty} \vert e^{2i\pi z}-1\vert 
=
\begin{cases} 
+\infty, \quad z\in CB
\\
1, \ \ \  \, \quad z\in AD.
\end{cases}
\end{equation}
It follows from~\eqref{eq:riv100} and~\eqref{eq:riv101} that 
$$
\lim_{d\to +\infty} \int_{CB\cup AD} f_s(z) \dd z =0.
$$
Thus
$$
F_{s,n-1}(x,t) = \int_{BA} f_s(z) \dd z - \int_{CD} f_s(z) \dd z + o(1),
$$
where $o(1)$ is for $d\to +\infty$. We observe that $BA=CD+1$, so that
\begin{align*}
F_{s,n-1}(x,t) &= \int_{CD} f_s(z+1) \dd z - \int_{CD} f_s(z) \dd z + o(1)
\\
&= \int_{CD} (f_s(z+1)-f_s(z))\, \dd z + o(1)
\\
&= \int_{CD} \frac{g_s(z+n)-g_s(z+1)}{e^{2i\pi z}-1} \,\dd z + o(1).
\end{align*}
We now let $d\to +\infty$ to get the lemma.

\subsubsection{Proof of Lemma~\ref{lem:riv1}}
\

By definition of $V_s(x,t)$, the point is to estimate the series
$$
\rho x^{s-1/2} \sum_{k=N+1}^{\infty}e^{-i\pi (k-\{t\})^2/x}
\left(\frac1{(k-\{t\})^s}-\frac1{k^s}\right),
$$
the modulus of which is obviously bounded by
$$
x^{s-1/2}\sum_{k=N+1}^{\infty} \left\vert \frac1{(k-\{t\})^s}-\frac1{k^s}\right\vert.
$$
We observe that, since $\{t\} \in [0,1)$ and $N\ge 0$, 
\begin{align*}
\sum_{k=N+1}^{\infty} \left\vert \frac1{(k-\{t\})^s}-\frac1{k^s}\right\vert &= 
\sum_{k=N+1}^{\infty} \left(\frac1{(k-\{t\})^s}-\frac1{k^s}\right)
\\
&\le 
\sum_{k=N+1}^{\infty} \left( \frac1{(k-\{t\})^s}-\frac1{(k+1-\{t\})^s}\right)
\\
&=\frac{1}{(N+1-\{t\})^s}.
\end{align*}
The lemma follows.

\subsubsection{Proof of Lemma~\ref{lem:riv2}}

\

We want to estimate the modulus of 
$$
\sum_{k=N+1}^{\infty} e^{-i\pi (k-\{t\})^2/x} 
\bigg(\frac{1}{(\rho xu+k-\{t\})^s}-\frac{1}{(k-\{t\})^s}\bigg),
$$ 
which is bounded above by
$$
\sum_{k=1}^{\infty}  
\left \vert \frac{1}{(\rho xu+k+N-\{t\})^s}-\frac{1}{(k+N-\{t\})^s}\right\vert
$$
This problem is similar to the one we  dealt with in the proof of 
Lemma~\ref{lem:riv1} but this is a bit more difficult 
here because $\rho xu$ is not real. We set $M=N-\{t\}$.
We observe that
\begin{align*}
\left \vert \frac{1}{(\rho xu+k+M)^s}-\frac{1}{(k+M)^s}\right\vert &=
\left \vert \frac{1}{(\rho xu+k+M)^s}\cdot\left(1-\Big(1+\frac{\rho x u}{k+M}\Big)^s\right)\right\vert
\\
& \ll 
\begin{cases} \displaystyle 
\frac{\vert x u\vert}{\vert k+M+\rho x u\vert^s(k+M)} \quad \textup{if} \quad \vert x u\vert\le k+M
\\
 \displaystyle 
\frac{\vert x u\vert^s}{\vert k+M+\rho x u\vert^s(k+M)^s} \quad \textup{if} \quad \vert x u\vert\ge k+M
\end{cases}
\end{align*}
where the implicit constant depends on $s$ only. Hence 
\begin{multline*}
\sum_{k=1}^{\infty}
\left \vert \frac{1}{(\rho xu+k+M)^s}-\frac{1}{(k+M)^s}\right\vert 
\\
\ll \sum_{\stackrel{k=1}{\vert x u\vert\le k+M}}^{\infty} 
\frac{\vert x u\vert}{\vert k+M+\rho x u\vert^s(k+M)} + \sum_{\stackrel{k=1}{\vert x u\vert\ge k+M}}^{\infty} 
\frac{\vert x u\vert^s}{\vert k+M+\rho x u\vert^s(k+M)^s}.
\end{multline*}
We denote by $S_1$ and $S_2$ these two series and 
we write $\rho xu = \frac{\sqrt{2}}{2}(v+iv)$ for some $v\in \mathbb R$. 
(Note that $\vert x u\vert =\vert v\vert$.)

If $v\ge 0$, then 
$$
0\le S_1\le \sum_{k=1}^{\infty} \frac{v}{(k+M+v/\sqrt{2})^s(k+M)} \le 
\sum_{k=1}^{\infty} \frac{v}{(k+M)^{s+1}}\ll \frac{v}{(M+1)^s}
$$
for some implicit constant that depends only on $s$. Moreover, 
\begin{equation*}
0\le S_2\le \sum_{1\le k\le v} \frac{v^s}{(k+M+v/\sqrt{2})^s(k+M)^{s}}
\ll \sum_{1\le k \le v} \frac{v^s }{(k+M)^{2s}} 
\le  \frac{v^{s+1} }{(M+1)^{2s}}
\end{equation*}
for some implicit constant that depends only on $s$. Adding 
the two upper bounds for $S_1$ and $S_2$, we get
$$
\sum_{k=1}^{\infty}
\left \vert \frac{1}{(\rho xu+k+M)^s}-\frac{1}{(k+M)^s}\right\vert \ll \frac{\vert xu\vert}{(M+1)^s} 
$$
which proves the lemma in this case.

Let us now consider the case $v\le 0$. If $\vert v\vert\le k+M$, then  
$$
\vert k+M+\rho x u \vert \ge (k+M-\vert  v\vert /\sqrt{2})\ge (1-1/\sqrt{2})(k+M)
$$
so that
$$
0\le S_1\ll \sum_{k=1}^{\infty} \frac{\vert v\vert }{(k+M)^{s+1}}\ll \frac{\vert x u\vert}{(M+1)^s}
$$
for some implicit constants that depend only on $s$. If $\vert v\vert \ge k+M$, then 
$\vert k+M+\rho x u\vert^2\ge w^2/2\ge (k+M)^2/2$ so that again
$$
0\le S_2\ll  \sum_{1\le k \le \vert v\vert} \frac{\vert v\vert^s }{(k+M)^{2s}}
\ll \frac{\vert x u\vert^{s+1}}{(M+1)^{2s}}
$$
for some implicit constants that depend on $s$. We conclude exactly as above.

\subsubsection{Proof of Lemma~\ref{lem:3}}

\

Again, we follow Mordell's method in the book~\cite{choque}.
For any $s\ge 0$ and $z=-\frac12+\rho u 
\in -\frac12 +\rho\mathbb R$, we have
$$
\left\vert \frac{g_s(z+n)}{e^{2i\pi z}-1}  \right\vert = \frac{e^{-\pi x u^2}}
{\vert u+ \rhob (n-\frac12) \vert^s}\cdot 
\frac{e^{-\sqrt{2}\pi u \theta}}{\vert e^{2i\pi z}-1\vert}
$$
with $\theta=(n-\frac12)x+t+ \xi$. By definition of $\xi$, 
we have $0\le \theta \le 1$ and it follows that
$$
\frac{e^{-\sqrt{2}\pi u \theta}}{\vert e^{2i\pi z}-1\vert} =\mathcal{O}(1)
$$
for any $u \in \mathbb R$. Moreover, for any $u\in \mathbb R$ and any $n\ge 1$, we have
$$
\frac{1}{\vert u+ \rhob (n-\frac12) \vert^s} \ll \frac{1}{\vert u+ \rhob n \vert^s}
$$
for some effective constant that depends only on $s$.
Hence, for any $s\ge 0$, 
$$
\left\vert\int\limits_{-1/2-\rho \infty}^{-1/2+\rho \infty}
 \frac{g_s(z+n)}{e^{2i\pi z}-1} \dd z \right\vert \ll
\int\limits_{-\infty}^{+\infty} \frac{e^{-\pi x u^2}}{\vert u+ \rhob n \vert^s } \dd u
$$
where the implicit constant depends on $s$. 
Since $\rhob \not \in \mathbb R$, we readily deduce that
$$
\left\vert\int\limits_{-1/2-\rho \infty}^{-1/2+\rho \infty}
 \frac{g_s(z+n)}{e^{2i\pi z}-1} \dd z \right\vert \le  \frac{c_1(s)}{n^s}
\int\limits_{-\infty}^{+\infty} e^{-\pi x u^2} \dd u= \frac{c_1(s)}{n^s\sqrt{x}}.
$$
for some effective constant $c_1(s)$. 

To get the second upper bound, we need to distinguish the case $s=1$ and the case $s\neq 1$. 

\medskip

\noindent {\bf Case $s\ge 0, s\neq 1$.} We  assume for the moment 
that $0\le n\sqrt{x}\le 1$ and explain 
below how to remove this assumption. We set $y=n\sqrt{x/2}$; with $v=\sqrt{x}u$, 
we get
\begin{align*}
\int\limits_{-\infty}^{+\infty} \frac{e^{-\pi x u^2}}{\vert u+ \rhob n \vert^s } \dd u 
&\le \sqrt{2} 
x^{\frac{s-1}{2}}\int\limits_{-\infty}^{+\infty} \frac{e^{-\pi v^2}}{(\vert v +y\vert + y)^s } \dd v 
\\
&\le \sqrt{2} x^{\frac{s-1}{2}}\int\limits_{0}^{+\infty} \frac{e^{-\pi v^2}}{(v + y)^s} \dd v 
+\sqrt{2} x^{\frac{s-1}{2}}\int\limits_{0}^{+\infty} \frac{e^{-\pi v^2}}{(\vert v - y\vert +y)^s} \dd v. 
\end{align*}
First, 
\begin{align*}
\int\limits_{0}^{+\infty} \frac{e^{-\pi v^2}}{(v + y)^s} \dd v 
&\le 
\int\limits_{0}^{1} \frac{\dd v}{(v + y)^s}  + \frac{1}{(1+y)^s}
\int\limits_{1}^{+\infty} e^{-\pi v^2} \dd v
\\
& \le \frac{(1+y)^{1-s}-y^{1-s}}{1-s}  + \frac{1}{(1+y)^s} \le c_2(s)
\end{align*}
for some effective constant $c_2(s)$ because $0\le y\le \sqrt{2}/2$.
Second, 
\begin{align*}
\int\limits_{0}^{+\infty} \frac{e^{-\pi v^2}}{(\vert v - y\vert+y) ^s} \dd v 
&\le 
\int\limits_{0}^{y} \frac{\dd v}{(2y-v)^s} + \int_y^{y+1}\frac{\dd v}{v^s}+ 
\frac{1}{(1+y)^s}\int\limits_{1}^{+\infty} e^{-\pi v^2} \dd v
\\
& \le \frac{2^{1-s}-1}{1-s} y^{1-s} + \frac{(1+y)^{1-s}-y^{1-s}}{1-s} 
+ \frac{1}{(1+y)^s} \le c_3(s)
\end{align*}
for some effective constant $c_3(s)$ because $0\le y\le \sqrt{2}/2$. Hence, 
$$
\int\limits_{-\infty}^{+\infty} \frac{e^{-\pi x u^2}}{\vert u+ \rhob n \vert^s } \dd u \le 
c_4(s)x^{\frac{s-1}{2}}.
$$
with $c_4(s)=\sqrt{2} (c_2(s)+c_3(s))$.

In summary, we have obtained so far:  
$$
\left\vert\int\limits_{-1/2-\rho \infty}^{-1/2+\rho \infty}
 \frac{g_s(z+n)}{e^{2i\pi z}-1} \dd z \right\vert \le 
\begin{cases} \frac{c_1(s)}{n^s\sqrt{x}} \;\textup{for all} \;n\ge 1,x >0
\\
c_4(s)x^{\frac{s-1}{2}} \; \textup{if} \;0\le n\sqrt{x}\le 1.
\end{cases}
$$
With $c(s)=\max(c_1(s), c_4(s))$ and since 
$1/(n^s\sqrt{x})\ge x^{\frac{s-1}{2}}$ when $0\le n\sqrt{x}\le 1$, we deduce that
$$
\left\vert\int\limits_{-1/2-\rho \infty}^{-1/2+\rho \infty}
 \frac{g_s(z+n)}{e^{2i\pi z}-1} \dd z \right\vert 
\le c(s)\min\Big(\frac{1}{n^s\sqrt{ x}}, \frac{1}{x^{\frac{1-s}{2}}}\Big)
$$
for all $n\ge 1,x >0$. This completes the proof in this case.

\medskip

\noindent {\bf Case $s=1$.} 
We set $y=n \sqrt{x/2}\ge 0$ for simplicity. We then have (with $v=\sqrt{x}u$)
$$
\int\limits_{-\infty}^{+\infty} \frac{e^{-\pi x u^2}}{\vert u+ \rhob n \vert } \dd u 
\ll \int\limits_{0}^{+\infty} \frac{e^{-\pi v^2}}{v+ y} \dd v
+ \int\limits_{-\infty}^{0} \frac{e^{-\pi v^2}}{\vert v+ y\vert+y} 
\dd v.
$$
First, 
\begin{align*}
0\le \int\limits_{0}^{+\infty} \frac{e^{-\pi v^2}}{v+ y} \dd v &\le  
\int\limits_{0}^{1} \frac{\dd v}{v + y}  + \frac{1}{1+y}\int \limits_{1}^{+\infty} e^{-\pi v^2} \dd v
\\&\le  \log(y+1) -\log(y)+\int \limits_{0}^{\infty} e^{-\pi v^2} \dd v
\\&= \log(1+n\sqrt{x/2})-\log(n\sqrt{x/2}) + 1.
\end{align*}
Second, 
\begin{align*}
0\le \int\limits_{-\infty}^{0} \frac{e^{-\pi v^2}}{y+\vert v+ y\vert} \dd v &= 
\int\limits_0^{+\infty} \frac{e^{-\pi v^2}}{y+\vert v- y\vert} \dd v
 \\
&\le \int\limits_{0}^{y} \frac{\dd v}{2y-v}  + \int \limits_{y}^{y+1} \frac{\dd v}{v} 
+ \frac1{y+1}\int \limits_{y+1}^{\infty} e^{-v} \dd v
\\
&\le \log(2)+\log(y+1)-\log(y)+1 
\\
&=\log(1+n\sqrt{x/2}) -\log(n\sqrt{x/2}) + 1+\log(2).
\end{align*}
Collecting both estimates, we obtain
$$
\left\vert\int\limits_{-1/2-\rho \infty}^{-1/2+\rho \infty}
 \frac{g(z+n)}{e^{2i\pi z}-1} \dd z \right\vert \ll \vert \log (n\sqrt{x})\vert+\log(1+n\sqrt{x/2})+1
$$
for some absolute constant. 
If $n\sqrt{x}\ge 1$, then 
$\log(1+n\sqrt{x/2})\le \log(n\sqrt{x}) + 1$ 
and if $0<n\sqrt{x}\le 1$, then $\log(1+n\sqrt{x/2}) \le \log(1+1/\sqrt{2})$. 
Consequently, there exists an absolute constant $c$ such that
$$
\left\vert\int\limits_{-1/2-\rho \infty}^{-1/2+\rho \infty}
 \frac{g(z+n)}{e^{2i\pi z}-1} \dd z \right\vert \le c(\vert \log (n\sqrt{x})\vert+1)
$$
 for any $n\ge 1$ and any $x>0$.

\section{Proof of Theorem~\ref{theo:1}, part $(ii)$}\label{sec:2}

In this section, we prove that  
\begin{itemize}
\item the function $\Omega_s(x)$ is continuous on $\mathbb {R}\setminus\{0\}$ for any $s\ge 0$.

\item the function $\Omega_s(x)$ is differentiable at any rational number $p/q$ with $p,q$ both odd, 
for any $s\ge 0$.

\item  the function $\Omega_s(x)-\frac{\rho^{1-s}\Gamma(\frac{1-s}{2})}{2\pi^{\frac{1-s}{2}}}
\vert x\vert^{\frac{s-1}2}$ is bounded on $\mathbb  R$ when $0\le s< 1$.

\item the function $\Omega_1(x)-\log (1/\sqrt{\vert x\vert})$ is bounded on $\mathbb  R$.

\item the function $\Omega_s(x)$ is differentiable on $\mathbb {R}\setminus\{0\}$ and continuous at $x=0$ for any $s>1$.

 \end{itemize}
Given the definition of $\Omega_s(x)$ by means of the function $I_s(x)$ (see~\eqref{eq:8}), 
it is enough to prove these facts for $x\ge 0$. 

\subsection{Continuity and differentiability of $\Omega_s(x)$ on $(0,+\infty)$}
\

Note that $V_s(x,0)=0$ and thus
$
I_s(x) = U_s(x)+W_s(x)
$
where 
$$
U_{s}(x)= \int\limits_{1/2-\rho \infty}^{1/2+\rho \infty} \frac{e^{i\pi z^2 x}}{z^s(1-e^{2i\pi z})} \dd z
$$
and 
$$
W_{s}(x)=\rho x^s\int\limits_{-\infty}^{\infty} e^{-\pi x u^2}
\left(\sum_{k=1}^{\infty} e^{-i\pi k^2/x} 
\bigg(\frac{1}{(\rho xu+k)^s}-\frac{1}{k^s}\bigg) \right) \dd u.
$$
It is clear that $U_{s}(x)$ defines a differentiable function on 
$(0,+\infty)$. The 
series 
$$
\widehat{W}_{s}(x,u,0)=\sum_{k=1}^{\infty} e^{-i\pi k^2/x} 
\bigg(\frac{1}{(\rho xu+k)^s}-\frac{1}{k^s}\bigg)
$$
defines a continuous function of $(u,x)$ on $\mathbb R \times (0,+\infty)$. 
Moreover, by Lemma~\ref{lem:riv2} applied with $N=0$, we have
$\vert \widehat{W}_{s}(x,u,0)\vert \ll \vert u x \vert .$ 
This guarantees the continuity $W_{s}(x)$ on $[0,+\infty)$, and 
that $W_{s}(x)=\mathcal{O}(x^{s})$ on $[0,+\infty)$. Hence, $\Omega_s(x)$ is continuous 
on $(0,+\infty)$.

\medskip

To prove that $\Omega_s(x)$ is differentiable at any rational number $x=p/q$ with $p,q$ both odd, 
it remains to prove that this is the case of $W_s(x)$. Integrating by parts, we get
\begin{align*}
W_s(x)&=\rho x^s\int\limits_{-\infty}^{\infty} ue^{-\pi x u^2}
\left(\sum_{k=1}^{\infty} e^{-i\pi k^2/x} 
\frac1u\cdot \bigg(\frac{1}{(\rho xu+k)^s}-\frac{1}{k^s}\bigg) \right) \dd u
\\
&=\frac{x^{s-1}}{2\pi}\int\limits_{-\infty}^{\infty} e^{-\pi x u^2}
\left(\sum_{k=1}^{\infty} e^{-i\pi k^2/x} \frac{\dd}{\dd u}\bigg(
\frac1u \cdot\Big(\frac{1}{(\rho xu+k)^s}-\frac{1}{k^s}\Big)\bigg) \right) \dd u,
\end{align*}
where differentiation under the sum is allowed by uniform convergence of 
$\widehat{W}_{s}(x,u,0)$. We observe that
\begin{align*}
\frac{\dd}{\dd u}\bigg(
\frac1u \cdot\Big(\frac{1}{(\rho xu+k)^s}-\frac{1}{k^s}\Big)\bigg) 
&= \frac{-s\rho x}{u(k+\rho xu)^{s+1}}-\frac{1}{u^2(k+\rho xu)^s}+\frac{1}{u^2k^s}
\\
&= \frac{c_s(\rho x)^2}{k^{s+2}}+\mathcal{O}\left(\frac{1}{k^{s+3}}\right)
\end{align*}
for some constant $c_s>0$ independent of $k$ and $x$.
Therefore, 
\begin{multline}
W_s(x)=\frac{x^{s-1}}{2\pi}\int\limits_{-\infty}^{\infty} e^{-\pi x u^2}
\left(\sum_{k=1}^{\infty} e^{-i\pi k^2/x} \bigg(\frac{\dd}{\dd u}\bigg(
\frac1u \cdot\Big(\frac{1}{(\rho xu+k)^s}-\frac{1}{k^s}\Big)\bigg) -\frac{c_s(\rho x)^2}{k^{s+2}}
\bigg) \right) \dd u
\\
+ \frac{c(s)\rho^3 x^{s+1}}{2\pi}  \int_{-\infty}^{\infty} e^{-\pi x u^2} \dd u\cdot  
\sum_{k=1}^{\infty} \frac{e^{-i\pi k^2/x}}{k^{s+2}} \label{riv1806}.
\end{multline}

On the one hand, the series 
$$
\sum_{k=1}^{\infty} e^{-i\pi k^2/x} \bigg(\frac{\dd}{\dd u}\bigg(
\frac1u \cdot\Big(\frac{1}{(\rho xu+k)^s}-\frac{1}{k^s}\Big)\bigg) -\frac{c_s(\rho x)^2}{k^{s+2}}
\bigg)
$$
can be termwise differentiated with respect to $x$ 
and it follows easily that the first term 
on the right hand side of~\eqref{riv1806} is differentiable for any $x>0$.

On the other hand, the second term on the right hand side of~\eqref{riv1806} is 
\begin{equation}
\label{eqluther}
\frac{c_s\rho^3 x^{s+1/2}}{2\pi}  \cdot  
\sum_{k=1}^{\infty} \frac{e^{-i\pi k^2/x}}{k^{s+2}}. 
\end{equation}
A result of Luther~\cite{luther} ensures that the series 
$
\sum_{k=1}^{\infty} \frac{e^{i\pi k^2x}}{k^{\xi}}
$
is differentiable at any rational number $x=p/q$, $p,q$ odd, for any fixed $\xi>3/2$  
(and this is no longer true when $\xi\le 3/2$). Since $s\ge 0$, this result is more than needed to 
complete the proof that $W_s(x)$ is differentiable at any rational number $x=p/q$, $p,q$ odd.

Moreover, when $s>1$, the series~\eqref{eqluther} is everywhere differentiable (except at 0) by uniform convergence, hence $\Omega_s(x)$ is differentiable on $(0,+\infty)$.

\subsection{Local behavior of $\Omega_s(x)$ around $x=0$}
\

It was proved above  that $W_{s}(x)=\mathcal{O}(x^{s})$ on $[0,+\infty)$ and $U_{s}(x)$ is continuous on $(0,+\infty)$.
We will now establish that 
\begin{itemize} 
\item  $U_{s}(x)$ is continuous at $x=0$ for any $s>1$.
\item  $U_{s}(x)=\frac{\rho^{1-s}\Gamma(\frac{1-s}{2})}{2\pi^{\frac{1-s}{2}}}
x^{\frac{s-1}{2}}+\mathcal{O}(1)$ on $(0,+\infty)$ for any $0\le s< 1$.
\item $U_{1}(x)= \log(1/\sqrt{x})+\mathcal{O}(1)$ on $(0,+\infty)$.
\end{itemize}

The case $s>1$ is easy because 
$$
\lim_{x\to 0^+} U_s(x)=U_s(0)= \int\limits_{1/2-\rho \infty}^{1/2+\rho \infty} 
\frac{1}{z^s(1-e^{2i\pi z})} \dd z
$$
is finite, by Lebesgue dominated convergence theorem. This implies that $\Omega_s(x)$ is continuous at $x=0$ when $s>1$.

We assume from now on that $s\in[0,1]$. 
The change of variable $z=\frac12+\rho u$ gives
$$
U_s(x)=\rho e^{i\pi x/4}\int\limits_{-\infty}^{\infty}\frac{e^{-\pi x u^2}e^{i\rho \pi x u}}
{(\frac12+\rho u)^s(1+e^{2i\rho \pi u})} \dd u.
$$

If $u\le 0$, then since $e^{2i\rho {\pi} u}=e^{i\sqrt{2\pi}u}e^{-\sqrt{2\pi}u}$ we have
$$
\left\vert g(u) :=\frac{1}{1+e^{2i\rho \pi u }}=\frac{e^{-i\sqrt{2}\pi u}e^{\sqrt{2}\pi u}}
{1+e^{-i\sqrt{2}\pi u}e^{\sqrt{2}\pi u}}  \right\vert
\ll e^{\pi \sqrt{2} u}
$$
and thus 
$$
\left\vert\frac{e^{-\pi x u^2}e^{i\rho \pi x u}}
{(\frac12+\rho u)^s(1+e^{2i\rho \pi u})} \right\vert \ll 
\frac{e^{\sqrt{2}\pi u}}{\vert \frac12 +\rho u\vert^s}.
$$
Since the integral
$$
\int\limits_{-\infty}^{0}\frac{e^{\sqrt{2}\pi u}}{\vert \frac12 +\rho u\vert^s} \,\dd u
$$
is convergent, by Lebesgue dominated convergence theorem, we get that 
$$
\lim_{x\to 0^+}  
\int\limits_{1/2-\rho \infty}^{1/2} \frac{e^{i\pi z^2 x}}{z^s(e^{2i\pi z}-1)} 
\dd z = \int\limits_{1/2-\rho \infty}^{1/2} \frac{\dd z}{z^s(e^{2i\pi z}-1)} =:\alpha_s.
$$

If $u\ge 0$, $g(u)\to 1$ when $u\to +\infty$ and we cannot invoke Lebesgue's theorem 
because $1/\vert \frac12 +\rho u\vert^s$ is not integrable over $[0,+\infty)$. However, 
$$
g(u)-1=-\frac{e^{2i\pi \rho u}}{1+e^{2i\pi \rho u}}
$$
so that $\vert g(u)-1\vert \ll e^{-\sqrt{2}\pi u}$ on $[0,+\infty)$, and Lebesgue's 
theorem entails that
$$
\lim_{x\to 0^+}  
\int\limits_{1/2}^{1/2+\rho \infty} 
\frac{e^{i\pi z^2 x}}{z^s}\left(\frac1{e^{2i\pi z}-1}-1\right) \dd z = 
\int\limits_{1/2}^{1/2+\rho \infty} 
\frac{1}{z^s}\left(\frac1{e^{2i\pi z}-1}-1\right) \dd z =:\beta_s.
$$
It thus remains to study the simpler integral 
$$
P_s(x):=\int\limits_{1/2}^{1/2+\rho \infty} 
\frac{e^{i\pi z^2 x}}{z^s} \,\dd z
$$
because 
$
U_s(x)=P_s(x)+\alpha_s+\beta_s+o(1)
$
as $x\to 0^+$. Setting $z=\frac12+\rho v/\sqrt{w}$, we obtain
$$
P_s(x)= 
\rho^{1-s}e^{i\pi x/4}x^{\frac{s-1}{2}}\int\limits_0^{+\infty} e^{-\pi v^2}
\frac{e^{i\rho \pi \sqrt{x}v}}{(v+\rhob\frac{\sqrt{x}}{2})^s} \dd v.
$$
We see here that as $x\to 0^+$, the integral is ``close'' to the integral 
$\int\limits_0^{+\infty} e^{-\pi v^2} \dd v/v^s$. The latter is convergent if $0\le s<1$ 
but divergent if $s=1$ and we now have to distinguish between both possibilities.

\medskip

\noindent {\bf Case $0\le s<1$}. 
Observe that  $\vert \exp(i\rr \pi \sqrt{x}v)-1\vert \ll \sqrt{x}v$ for $v\in [0,+\infty)$ and 
any $x\ge 0$. (this is true when $\sqrt{x}v$ by using the Taylor polynomial, 
and it goes to zero when $\sqrt{x}v$ tends to $+\infty$.)
Moreover, $\vert v+\rhob\sqrt{x}/2\vert\ge \vert v+\sqrt{2x}\vert$. Hence, 
$$
\left\vert \int\limits_0^{+\infty} e^{-\pi v^2}\frac{e^{i\rho \pi \sqrt{x}v}-1}
{(v+\frac{\rhob\sqrt{x}}{2})^s} \,\dd v \right\vert \ll \int\limits_0^{+\infty} e^{-\pi v^2}
\frac{\sqrt{x}v}
{(v+\sqrt{2x})^s} \dd v \le \sqrt{x} \int\limits_0^{+\infty} v^{1-s}e^{-\pi v^2} =\mathcal{O}(\sqrt{x}).
$$
Therefore, 
\begin{equation}\label{riv:180}
P_s(x) = \rho^{1-s} x^{\frac{s-1}{2}}\int\limits_0^{+\infty} 
\frac{e^{-\pi v^2}}{(v+\rhob \sqrt{x}/2)^s} \dd v + \mathcal{O}(x^{\frac s2}).
\end{equation}
We now prove that
\begin{equation}\label{riv:181}
\int\limits_0^{+\infty} 
\frac{e^{-\pi v^2}}{(v+\rhob \sqrt{x}/2)^s} \dd v = \int\limits_0^{+\infty} 
\frac{e^{-\pi v^2}}{v^s} \dd v + \mathcal{O}(x^{\frac{1-s}2}). 
\end{equation}
Indeed, with $X=\rhob \sqrt{x}/2$, we have
\begin{equation*}
\int\limits_0^{+\infty} 
e^{-\pi v^2}\left(\frac{1}{(v+X)^s} -\frac1{v^s}\right) \dd v 
\ll \vert X\vert \int_{\vert X\vert}^{\infty}\frac{e^{-v}}{v^{s+1}} 
\dd v + \int_0^{\vert X\vert} \frac{e^{-v}}{v^s} \dd v 
\ll X^{1-s}, 
\end{equation*}
where we have used that 
$$
\left\vert \frac{1}{(v+X)^s} -\frac1{v^s}\right\vert \ll 
\begin{cases}
\frac{\vert X\vert}{v^{s+1}} \quad \textup{if} \quad \vert X\vert \le v
\\
\frac{1}{v^{s}} \quad \textup{if} \quad \vert X\vert \ge v.
\end{cases}
$$
Using~\eqref{riv:181} in~\eqref{riv:180} gives
$$
P_s(x)
= \frac{\rho^{1-s}\Gamma(\frac{1-s}{2})}{2\pi^{\frac{1-s}{2}}} x^{\frac{s-1}{2}} + \mathcal{O}(1)
$$
and finally
$$
U_s(x)=\frac{\rho^{1-s}\Gamma(\frac{1-s}{2})}{2\pi^{\frac{1-s}{2}}} x^{\frac{s-1}{2}} + \mathcal{O}(1)
$$
as $x\to 0^+$.

\medskip

\noindent {\bf Case $s=1$}. We start in a similar way:  
$$
\left\vert \int\limits_0^{+\infty} e^{-\pi v^2}\frac{e^{i\rho \pi \sqrt{x}v}-1}
{\frac{\sqrt{x}}{2}+\rho v} \,\dd v \right\vert \ll \int\limits_0^{+\infty} e^{-\pi v^2}
\frac{\sqrt{x}v}
{v+\sqrt{2x}} \dd v \le \sqrt{x}\int\limits_0^{+\infty} e^{-\pi v^2} \ll \sqrt{x}.
$$
Therefore
$$
P_1(x)=e^{i\pi x/4}\int\limits_0^{+\infty} 
\frac{e^{-\pi v^2}}{v+\rhob\frac{\sqrt{x}}{2}} \dd v+ \mathcal{O}( \sqrt{x}).
$$
Integrating by parts, we 
get 
\begin{align*}
\int\limits_0^{+\infty} 
\frac{e^{-\pi v^2}}{v+\rhob\frac{\sqrt{x}}{2}} \,\dd v &
=\left[ \log\Big(v+\rhob\frac{\sqrt{x}}{2}\Big)e^{-\pi v^2}\right]_0^{+\infty}+2\pi 
\int\limits_0^{+\infty} v \log\Big(v+\rhob\frac{\sqrt{x}}{2}\Big)e^{-\pi v^2}\dd v \notag
\\
&= \log(1/\sqrt{x})+  \mathcal{O}(1)  .
\end{align*}
Thus, 
$P_1(x)=\log(1/\sqrt{x}) + \mathcal{O}(1)$ when $x\to 0^+$ and the same estimate holds 
for $U_1(x)$ as claimed  in item 2.

\section{Proof of Theorem~\ref{theo:2}}\label{sec:4}

For simplicity, we set $r(x)=\exp(i\pi \si(x)/4)$. 
For any integer $n\ge0$, we define an integer $K(\ell,n)$ as follows: $K(-1,n)=n$ and 
$$
K(\ell,n)=\lfloor \lfloor \cdots \lfloor\lfloor n \vert x\vert \rfloor \vert T(x)\vert 
\rfloor\cdots \rfloor \vert T^{\ell}(x)\vert \rfloor
$$
for any integer $\ell\ge 0$. For instance, $K(0,n)=\lfloor n \vert x\vert \rfloor$ 
and  $K(1,n)=\lfloor\lfloor n \vert x\vert \rfloor \vert T(x)\vert\rfloor$.

We also set $L(n)=\min\{j\ge 0 : K(j,n)=0\}$. This integer is well defined. Indeed, 
by Proposition \ref{prop:ecf2},  it is obvious that 
$$
0\le K(j,n) \le n \vert xT(x)\cdots T^{j}(x)\vert \le \frac{n}{q_{j+1}-q_j} 
$$
and the right hand side tends to $0$ as $j\to +\infty$. 
By definition, we have $K(L(n)-1,n)\ge 1$. It is clear that $\lim_n L(n)=+\infty$ because otherwise 
if $L(n)$ were bounded, we would have $\lim_n K(L(n),n)=+\infty$ which is false.

\subsection{Proof of Theorem~\ref{theo:2}, part $(i)$}

\

We write 
$\Omega_s(x)=c(s)\vert x\vert^{\frac{s-1}{2}}+\Delta_s(x)$ where 
$c(s)=\frac{\rho^{1-s}\Gamma(\frac{1-s}{2})}{2\pi^{\frac{1-s}{2}}}$ and 
the function $\Delta_s(x)$ is 
bounded on $[-1,1]$ by Theorem~\ref{theo:1}$(ii)$. 
Let us define $E_s(n,x)$ as the error term in~\eqref{eq:11}, which is such that  
$$
E_s(n,x)=\mathcal{O}\bigg(\min\Big(\frac{1}{(n+1)^s\sqrt{\vert x\vert}}, 
\frac{1}{\vert x\vert^{\frac{1-s}{2}}}\Big)\bigg)
$$
For any irrational number $x\in (-1,1)$, Eq.~\eqref{eq:11} reads
\begin{equation}\label{eq:101}
F_{s,n}(x)=r(x)\vert x\vert^{s-\frac12}F_{s,\lfloor n x \rfloor}\big(T(x)\big) 
+c(s)\vert x\vert^{\frac{s-1}{2}}+\Delta_s(x)+E_s(n,x).
\end{equation}
Since $T(x)$ is also an irrational number in $(-1,1)$, we can use~\eqref{eq:101} 
with $x$ and $n$ replaced by $T(x)$ and $\lfloor n x \rfloor$ respectively, 
and iterate again the result. Formally, we find that for any integer $L\ge 0$,
\begin{align}
F_{s,n}(x)&=c(s)\sum_{j=0}^L r(x)r(T(x))\cdots r(T^{j-1}(x))
\frac{\vert xT(x)\cdots T^{j-1}(x)\vert^{s-\frac12}}{\vert T^{j}(x)\vert^{\frac{1-s}{2}}} \notag
\\
&+\sum_{j=0}^L r(x)r(T(x))\cdots r(T^{j-1}(x))
\vert xT(x)\cdots T^{j-1}(x)\vert^{s-\frac12}\, \Delta_s\big(T^{j}(x)\big)\notag
\\
&+r(x)r(T(x))\cdots r(T^{L}(x))
\vert xT(x)\cdots T^{L}(x)\vert^{s-\frac12}\, F_{s,K(L,n)}\big(T^{L+1}(x)\big)\notag
\\
&+\sum_{j=0}^L r(x)r(T(x))\cdots r(T^{j-1}(x))
\vert xT(x)\cdots T^{j-1}(x)\vert^{s-\frac12}\, E_s(K(j-1,n), T^j(x)).\label{eq:121}
\end{align}
With $L=L(n)$,~\eqref{eq:121} becomes 
\begin{align}
F_{s,n}(x)&=c(s)\sum_{j=0}^{L(n)} r(x)r(T(x))\cdots r(T^{j-1}(x))
\frac{\vert xT(x)\cdots T^{j-1}(x)\vert^{s-\frac12}}{\vert T^{j}(x)\vert^{\frac{1-s}{2}}} \notag
\\
&+\sum_{j=0}^{L(n)} r(x)r(T(x))\cdots r(T^{j-1}(x))
\vert xT(x)\cdots T^{j-1}(x)\vert^{s-\frac12}\, \Delta_s\big(T^{j}(x)\big)\notag
\\
&+\sum_{j=0}^{L(n)}r(x)r(T(x))\cdots r(T^{j-1}(x))
\vert xT(x)\cdots T^{j-1}(x)\vert^{s-\frac12}\, E_s(K(j-1,n), T^j(x)).\label{eq:161}
\end{align}
because $F_{s,K(L(n),n)}\big(T^{L(n)+1}(x)\big)=0$, being an empty sum. 

\medskip

Under hypothesis~\eqref{eq:riv10} of Theorem~\ref{theo:2}$(i)$, the series 
$$
c(s)\sum_{j=0}^{\infty} r(x)r(T(x))\cdots r(T^{j-1}(x))
\frac{\vert xT(x)\cdots T^{j-1}(x)\vert^{s-\frac12}}{ \vert T^j(x)\vert^{\frac{1-s}{2}}}
$$
converge absolutely (recall that $\big\vert r(x)r(T(x))\cdots r(T^{j-1}(x))\big\vert=1$) and 
this also forces the absolute convergence of the series
$$
\sum_{j=0}^{\infty} r(x)r(T(x))\cdots r(T^{j-1}(x))
\vert xT(x)\cdots T^{j-1}(x)\vert^{s-\frac12}\, \Delta_s\big(T^{j}(x)\big)
$$
because $\vert T^j(x)\vert\le 1$ and $\Delta_s$ is bounded on $[-1,1]$. 
Hence, since $L(n)\to +\infty$ with $n$ 
and by~\eqref{eq:121}, 
the convergence of $F_{s,n}(x)$ will follow from the proof that
$$
\sum_{j=0}^{L(n)}r(x)r(T(x))\cdots r(T^{j-1}(x)) 
\vert xT(x)\cdots T^{j-1}(x)\vert^{s-\frac12}\, E_s(K(j-1,n), T^j(x))
$$
tends to $0$ as $n\to +\infty$. For this, we observe that (by Theorem~\ref{theo:1})
$$
\lim_{n\to +\infty} E_s(K(j-1,n), T^j(x)) =0,
$$
\begin{equation}\label{eq:riv2006}
\vert E_s(K(j-1,n), T^j(x))\vert \ll_s \frac{1}{\vert T^j(x)\vert^{\frac{1-s}{2}}}
\end{equation}
and
$$
\sum_{j=0}^{\infty} \frac{\vert xT(x)\cdots T^{j-1}(x)\vert^{s-\frac12}}
{\vert T^j(x)\vert^{\frac{1-s}{2}}}<+\infty
$$
by hypothesis~\eqref{eq:riv10}. Hence, Tannery's theorem can be applied and it yields that
$$
\lim_{n\to+\infty}\sum_{j=0}^{L(n)}r(x)r(T(x))\cdots r(T^{j-1}(x))
\vert xT(x)\cdots T^{j-1}(x)\vert^{s-\frac12}\, E_s(K(j-1,n), T^j(x))=0
$$
as expected. We now let $n\to+\infty$ in~\eqref{eq:161} 
to get the identity
$$
F_s(x)=\sum_{j=0}^{\infty} r(x)r(T(x))\cdots r(T^{j-1}(x))
\vert xT(x)\cdots T^{j-1}(x)\vert^{s-\frac12}\, \Omega_s\big(T^{j}(x)\big).
$$

\subsection{Proof of Theorem~\ref{theo:2}, part $(ii)$}\label{ssec:theo2ii} 

\

The proof is more complicated than when $s<1$. We can write a 
bound similar to~\eqref{eq:riv2006} but the corresponding right hand side depends on $n$ and 
we cannot invoke Tannery's theorem. Instead we use an ad hoc inelegant method.

For this, we first need more informations on $K(j-1,n)$. 
By multiple applications of the trivial inequality $\lfloor \al \rfloor \ge \al-1$, we get
\begin{align}
K(j-1,n) &\ge n\vert xT(x)\cdots T^{j-1}(x)\vert - \sum_{k=1}^j \vert T^k(x)\cdots T^{j-1}(x)\vert 
\notag
\\
&=n\vert xT(x)\cdots T^{j-1}(x)\vert
\cdot\left(1-\frac1n\sum_{k=1}^j \frac1{\vert xT(x)\cdots T^{k-1}(x)\vert} \right). \label{eq:13}
\end{align}
We define $J=J(n)$ as the maximal integer such that $Jq_J\le n/4$; it 
is clear that $J \to +\infty$ with $n$. We claim that
$$
\sum_{k=1}^j \frac1{\vert xT(x)\cdots T^{k-1}(x)\vert} \le \frac{n}{2}
$$ 
for all $j\in\{ 0, \ldots, J\}$. Indeed, the sequence 
$\vert xT(x)\cdots T^{k-1}(x)\vert$ is non-increasing 
(because $\vert T^m(x)\vert \le 1$) and thus
$$
\sum_{k=1}^j \frac1{\vert xT(x)\cdots T^{k-1}(x)\vert} 
\le \frac{j}{\vert xT(x)\cdots T^{j-1}(x)\vert} 
=j\vert q_j+e_j x_j q_{j-1}\vert\le 2jq_j\le 2Jq_J \le \frac{n}{2}
$$ 
(Note that the sequence $(jq_j)_j$ is increasing.) 
Therefore,~\eqref{eq:13} yields that 
$$
K(j-1,n) \ge \frac{n}{2}\vert xT(x)\cdots T^{j-1}(x)\vert
$$
for any $j\in\{0,\ldots, J\}$. Moreover, for those $j$, 
we also have $K(j-1,n)\ge 1$. 

We write 
$\Omega_1(x)=\log(1/\sqrt{\vert x\vert})+\Delta_1(x)$ where the function $\Delta_1(x)$ is 
bounded on $[-1,1]$ by Theorem~\ref{theo:1}$(ii)$. 
Let us define $E_1(n,x)$ as the error term in~\eqref{eq:11} for $s=1$, i.e. 
$$
E_1(n,x)=\mathcal{O}\bigg(\min\Big(\frac{1}{(n+1)\sqrt{\vert x\vert }}, 
\big\vert \log( (n+1)\sqrt{\vert x\vert })\big\vert
+1\Big)\bigg)
$$
For any irrational number $x\in (-1,1)$, Eq.~\eqref{eq:11} reads
\begin{equation}\label{eq:10}
F_{1,n}(x)=r(x)\sqrt{\vert x\vert}F_{1,\lfloor n x \rfloor}\big(T(x)\big) 
-\frac12\log \vert x\vert+\Delta_1(x)+E_1(n,x).
\end{equation}
Since $T(x)$ is also an irrational number in $(-1,1)$, we can use~\eqref{eq:10} 
with $x$ and $n$ replaced by $T(x)$ and $\lfloor n x \rfloor$ respectively, 
and iterate again the result. Formally, we find that for any integer $L\ge 0$,
\begin{align}
F_{1,n}(x)&=-\frac12\sum_{j=0}^L r(x)r(T(x))\cdots r(T^{j-1}(x))
\sqrt{\vert xT(x)\cdots T^{j-1}(x)\vert} \,\log\vert T^{j}(x)\vert \notag
\\
&+\sum_{j=0}^L r(x)r(T(x))\cdots r(T^{j-1}(x))
\sqrt{\vert xT(x)\cdots T^{j-1}(x)\vert}\, \Delta_1\big(T^{j}(x)\big)\notag
\\
&+r(x)r(T(x))\cdots r(T^{L}(x))
\sqrt{\vert xT(x)\cdots T^{L}(x)\vert}\, F_{1,K(L,n)}\big(T^{L+1}(x)\big)\notag
\\
&+\sum_{j=0}^L r(x)r(T(x))\cdots r(T^{j-1}(x))
 \sqrt{\vert xT(x)\cdots T^{j-1}(x)\vert}\, E_1(K(j-1,n), T^j(x)).\label{eq:12}
\end{align}
From now on, the letter $L$ will stand exclusively for $L(n)$ (defined above), 
in which case~\eqref{eq:12} becomes 
\begin{align}
F_{1,n}(x)&=-\frac12\sum_{j=0}^L r(x)r(T(x))\cdots r(T^{j-1}(x))
\sqrt{\vert xT(x)\cdots T^{j-1}(x)\vert} \,\log\vert T^{j}(x)\vert\notag
\\
&+\sum_{j=0}^L r(x)r(T(x))\cdots r(T^{j-1}(x))
\sqrt{\vert xT(x)\cdots T^{j-1}(x)\vert}\, \Delta_1\big(T^{j}(x)\big)\notag
\\
&+\sum_{j=0}^L r(x)r(T(x))\cdots r(T^{j-1}(x))
\sqrt{\vert xT(x)\cdots T^{j-1}(x)\vert}\, E_1(K(j-1,n), T^j(x)).\label{eq:16}
\end{align}
because $F_{1,K(L,n)}\big(T^{L+1}(x)\big)=0$, being an empty sum. 

\medskip

Under hypothesis~\eqref{eq:3} of 
Theorem~\ref{theo:2}$(ii)$, the series 
$$
-\frac12 \sum_{j = 0}^{\infty} r(x)r(T(x))\cdots r(T^{j-1}(x))
\sqrt{\vert xT(x)\cdots T^{j-1}(x)\vert} 
\,\log\vert T^{j}(x)\vert,
$$
converges absolutely and hypothesis~\eqref{eq:33} implies the absolute convergence
$$
\sum_{j= 0}^{\infty} r(x)r(T(x))\cdots r(T^{j-1}(x))
\sqrt{\vert xT(x)\cdots T^{j-1}(x)\vert}, 
$$
which in turn forces the absolute convergence of 
$$
\sum_{j= 0}^{\infty} r(x)r(T(x))\cdots r(T^{j-1}(x))
\sqrt{\vert xT(x)\cdots T^{j-1}(x)\vert}\, \Delta_1\big(T^{j}(x)\big)
$$
by boundedness of $\Delta_1$ on $[-1,1]$. Hence, since $L(n)\to +\infty$ with $n$ 
and by~\eqref{eq:12}, 
the convergence of $F_{1,n}(x)$ will follow from the proof that
$$
R_1(n,x) :=\sum_{j=0}^{L(n)}\sqrt{\vert xT(x)\cdots T^{j-1}(x)\vert}\, E_1(K(j-1,n), T^j(x))
$$
tends to $0$ as $n\to +\infty$. 

With $H=\min(L,J)$, we split $R_1(n,x)$ into three parts:
\begin{multline*}
R_1(n,x) = \left(\sum_{j=0}^{H-1}+\sum_{j=H}^{L-1}\right) 
\sqrt{\vert xT(x)\cdots T^{j-1}(x)\vert}\, E_1(K(j-1,n), T^j(x)) 
\\
+ \sqrt{\vert xT(x)\cdots T^{L-1}(x)\vert}\, E_1(K(L-1,n), T^L(x)).
\end{multline*}
The sum from $H$ to $L-1$ might be empty if  $H=L$, 
which would simplify the proof below. We assume that it is not empty, 
i.e that $H=J\le L-1$. 
For the two sums from $j=0$ to $j=L-1$, we use the fact that (for any $n\ge 0$, any $x>0$)
$$
E_1(n,x) = \mathcal{O}\left(\frac{1}{(n+1)\sqrt{\vert x\vert}}\right).
$$
First
\begin{align*}
\sum_{j=0}^{H-1}&\sqrt{\vert xT(x)\cdots T^{j-1}(x)\vert}\, E_1(K(j-1,n), T^j(x)) \\
&\ll \sum_{j=0}^{J-1} \frac{\sqrt{\vert xT(x)\cdots T^{j-1}(x)\vert}}{(K(j-1,n)+1) 
\sqrt{\vert T^j(x) \vert} }
\le \frac2n\sum_{j=0}^{J-1} 
\frac{\sqrt{\vert xT(x)\cdots T^{j-1}(x)\vert}}
{(\vert xT(x)\cdots T^{j-1}(x) \vert +1)\sqrt{\vert T^j(x) \vert}}
\\
&\le \frac2n\sum_{j=0}^{J-1} 
\frac{1}{\sqrt{\vert xT(x)\cdots T^j(x) \vert}}
\le \frac{2J}{n\sqrt{\vert xT(x)\cdots  T^{J-1}(x) \vert}}
\\
&\le \frac{4J\sqrt{q_{J}}}{n} \le \frac{4\sqrt{Jq_J}}{n}\le \frac{2}{\sqrt{n}}
\end{align*}
by definition of $J$. 

Second, for the sum from $j=H$ to $j=L-1$, we observe 
that $K(j-1,n)\ge \frac{1}{\vert T^j(x)\vert}$ because 
$1\le K(j,n)=\lfloor K(j-1,n) \vert T^j(x)\vert \rfloor$ for such $j$ 
by definition of $L$. Hence, 
\begin{align}
\sum_{j=H}^{L-1}&\sqrt{\vert xT(x)\cdots T^{j-1}(x)\vert}\, E_1(K(j-1,n), 
T^j(x))\notag
\\
&\ll \sum_{j=H}^{L-1} \frac{\sqrt{\vert xT(x)\cdots T^{j-1}(x)\vert}}{(K(j-1,n)+1)\sqrt{\vert T^j(x)\vert }}
\le \sum_{j=H}^{L-1} \frac{\sqrt{\vert xT(x)\cdots T^{j-1}(x)\vert} \cdot \vert T^j(x)\vert}
{ \sqrt{\vert T^j(x) \vert}} \notag
\\
&\le  \sum_{j=H}^{L-1} \sqrt{\vert xT(x)\cdots T^{j}(x)\vert}. \label{eq:14}
\end{align}
The series with term $\sqrt{\vert xT(x)\cdots T^{j}(x)\vert}$ is convergent (by hypothesis), 
thus the expression in~\eqref{eq:14} tends to $0$ as $n$ tends to infinity because 
$H$ tends to $+\infty$ with $n$. 
It remains to consider the case for $j=L$. Here, we use the fact that (for any $n\ge 0$, any $x>0$)
$$
E_1(n,x) \ll  \big\vert \log ((n+1) \sqrt{\vert x\vert})\big\vert+1.
$$
(In this case, $n$ and $x$ 
are replaced by $K(L-1,n)$ and $T^L(x)$ respectively.)  
Hence, 
\begin{multline}\label{eq:15}
\sqrt{\vert xT(x)\cdots T^{L-1}(x)\vert}\, E(K(L-1,n), T^L(x))
\\
\ll 
\sqrt{\vert xT(x)\cdots T^{L-1}(x)\vert}\cdot 
\Big(1+\big\vert \log \big((K(L-1,n)+1) \sqrt{\vert T^L(x)}\big)
\big\vert\Big)
\end{multline}
The properties $K(L-1,n)\ge 1$ and $\lfloor K(L-1,n)\vert T^L(x)\vert \rfloor=0$ together imply 
that $\vert T^L(x) \vert\le K(L-1,n)\vert T^L(x)\vert < 1$, so that 
\begin{align*}
\big\vert \log \big((K(L-1,n)+1) \sqrt{\vert T^L(x)\vert }\big)\big\vert 
&\le 
\big\vert \log \big(K(L-1,n) \sqrt{\vert T^L(x)\vert }\big)\big\vert + \log(2)
\\
& \le \frac32 \big\vert \log \vert T^L(x)\vert\big\vert+\log(2). 
\end{align*}
Therefore,~\eqref{eq:15} becomes
\begin{multline}\label{eq:riv12}
\sqrt{\vert xT(x)\cdots T^{L-1}(x)\vert}\, E(K(L-1,n), T^L(x))
\\
\ll 
\sqrt{\vert xT(x)\cdots T^{L-1}(x)\vert}\cdot\big(1+ \big\vert \log\vert T^L(x)\vert \big\vert\big). 
\end{multline}
Hypothesis~\eqref{eq:33} and~\eqref{eq:3} in Theorem~\ref{theo:2}$(ii)$ now ensure that 
the right hand side of~\eqref{eq:riv12} tends to $0$ when 
$L(n)\to +\infty$. 
This concludes the proof that $R_1(n,x)$ tends to $0$ when $n\to +\infty$. 

We now let $n\to+\infty$ in~\eqref{eq:16} 
to get the identity
$$
F_1(x)=\sum_{j=0}^{\infty} r(x)r(T(x))\cdots r(T^{j-1}(x))
\sqrt{\vert xT(x)\cdots T^{j-1}(x)\vert}\, \Omega_1\big(T^{j}(x)\big).
$$

\section{Proof of Theorem~\ref{theo:5}  }\label{sec:10_2}
 
\subsection{Proof of Theorem~\ref{theo:5}, part $(i)$}
 
\

We introduce a second dynamical system more adapted to our study 
(see the work of Schweiger~\cite{S1,S2}). 
Let us partition the interval $(0,1]$ into the double-indexed sequence of intervals 
$$ B(+1, k)= \left(\frac{1}{2k},\frac{1}{2k-1} \right] \ 
\mbox{ and } \ B(-1,k)=  \left(\frac{1}{2k+1},\frac{1}{2k} \right] .$$
Consider the map $U: \zu \to \zu $ by $U(0)=0$ and if $x\neq 0$
$$U(x) = e\cdot\left(\frac{1}{x}-2k  \right) \ \mbox { when } x\in B(e,k).$$
It is trivial to check that for every irrational $x\in \zu$, 
$|U(x)|=|T(x)|$. So we will prove Theorem~\ref{theo:5} using this 
transformation $U$ instead of $T$.

A key property is that $U$ is an ergodic transformation with a 
$\sigma$-finite invariant measure $\mu$ with infinite mass (due to 
the parabolic point $1$ for the mapping $U$), whose 
density relatively to the Lebesgue measure is given by
$$
d_\mu(x) = \frac{1}{x+1}+\frac{1}{1-x}.
$$
 
Further we need to study in details the orbit of a typical point $x$ 
under the action of the dynamical system $(\zu,U)$. For this, 
we introduce the points $x_p := \frac{p-1}{p} = 1-\frac{1}{p}$, 
for every integer $p\geq 1$. Observe that for every $p\geq 2$,
\begin{equation}
\label{uxp}
U(x_p) =x_{p-1}.
\end{equation}

\begin{lem}
\label{lem:33}
Let $p\geq 2$ be an integer, and let $y$ be such that 
$$
x_{p-1} \leq   y  \leq x_{p}.
$$
Then for every $ 0 \leq m \leq p-2  $, one has 
\begin{align}
\label{seuret:1}
x_{p-1-m}  \ \leq  &    U^{m}(y)   <    \  x_{p-m}
\\
\nonumber
\frac{p-m-2}{p-1}  \  \leq  & yU(y) \cdots U^{ m}(y) \leq  \   \frac{p-m-1}{p}.
\end{align}
\end{lem}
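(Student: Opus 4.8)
The plan is to observe that on the interval containing $y$ and all the iterates $U^{m}(y)$ that appear in the statement, the map $U$ reduces to a single Möbius branch, so that $U^{m}$ has a closed form and the product $yU(y)\cdots U^{m}(y)$ telescopes.

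Concretely, I would first record that for $x\in(1/2,1]$ the definition of $U$ gives $U(x)=2-1/x$ (this is the branch whose image is a full neighbourhood of $1$, and it is consistent with \eqref{uxp}), and that since $2-1/x\to 0$ as $x\to 1/2^{+}$ while $U(1/2)=0$, the formula $x\mapsto 2-1/x$ in fact describes $U$ on all of $[1/2,1]$. Next I would introduce the Möbius maps
$$
\phi_{m}(t)=\frac{(m+1)t-m}{mt-(m-1)}\qquad (m\ge 0),
$$
and check the following elementary facts, each a one-line computation with linear fractional maps: $\phi_{0}=\mathrm{id}$ and $\phi_{m+1}(t)=2-1/\phi_{m}(t)$; $\phi_{m}(x_{q})=x_{q-m}$ whenever $q-m\ge 1$; and $\phi_{m}'(t)=(mt-(m-1))^{-2}>0$, so $\phi_{m}$ is increasing away from its unique pole $t=(m-1)/m=x_{m}$. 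Since $x_{m}<x_{p-1}$ exactly when $m\le p-2$, every $\phi_{m}$ with $m\le p-2$ is continuous and increasing on $[x_{p-1},x_{p}]$, hence maps it bijectively onto $[\phi_{m}(x_{p-1}),\phi_{m}(x_{p})]=[x_{p-1-m},x_{p-m}]$.

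I would then prove by induction on $m$ from $0$ to $p-2$ that $U^{m}(y)=\phi_{m}(y)$ for all $y\in[x_{p-1},x_{p}]$. The base case $m=0$ is trivial. For the inductive step, the hypothesis together with the previous paragraph gives $U^{m}(y)=\phi_{m}(y)\in[x_{p-1-m},x_{p-m}]$; as long as $m\le p-3$ this interval is contained in $[x_{2},1]=[1/2,1]$, where $U$ agrees with $t\mapsto 2-1/t$, so $U^{m+1}(y)=2-1/\phi_{m}(y)=\phi_{m+1}(y)$, which covers all indices up to $p-2$. This already yields the first line of \eqref{seuret:1} (with the caveat that the right endpoint $x_{p-m}$ is attained only at $y=x_{p}$, so the strict inequality there is to be read for $y<x_{p}$). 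For the product, I would note that the denominator of $\phi_{j}$ equals the numerator of $\phi_{j-1}$: writing $N_{j}=(j+1)y-j$ and $N_{-1}=1$ one has $\phi_{j}(y)=N_{j}/N_{j-1}$, whence
$$
yU(y)\cdots U^{m}(y)=\prod_{j=0}^{m}\phi_{j}(y)=\frac{N_{m}}{N_{-1}}=(m+1)y-m .
$$
This is affine and increasing in $y$, and $(m+1)x_{p-1}-m=\frac{p-m-2}{p-1}$, $(m+1)x_{p}-m=\frac{p-m-1}{p}$, which is precisely the asserted two-sided bound.

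The one genuinely delicate point — and the only place I would slow down — is the junction at $1/2=x_{2}$, where $U$ switches branch (this affects $p\in\{2,3\}$ and, for general $p$, the passage to the last iterate $U^{p-2}(y)$, which lands in $[x_{1},x_{2}]=[0,1/2]$): there one must check that $U$ is continuous at $1/2$ with $U(1/2)=0$, so that the single Möbius formula, and hence all the closed forms above, remain valid through that point. Everything else is routine algebra with linear fractional maps.
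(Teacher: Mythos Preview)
Your argument is correct and rests on the same idea as the paper's: both exploit that on $[1/2,1]$ the map $U$ is the single increasing branch $t\mapsto 2-1/t$ satisfying $U(x_q)=x_{q-1}$, so that $[x_{p-1},x_p]$ is carried step by step to $[x_{p-1-m},x_{p-m}]$. For the product, the paper simply sandwiches each factor using~\eqref{seuret:1}, i.e.\ $x_{p-1-j}\le U^j(y)\le x_{p-j}$, and telescopes $\prod_{j=0}^{m}x_{p-j}=(p-m-1)/p$ (and likewise for the lower bound). You instead parametrise the iterates by the explicit M\"obius maps $\phi_m$ and obtain the \emph{exact} value $yU(y)\cdots U^m(y)=(m+1)y-m$, from which the stated bounds fall out at the endpoints $y=x_{p-1},x_p$; this is slightly more work but strictly more informative than what the paper records. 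Your care at the junction point $1/2$ is warranted, and your reading $U(x)=2-1/x$ on $(1/2,1]$ is indeed the one consistent with~\eqref{uxp} (the displayed formula for $U$ in the paper has a sign slip). Your caveat about the strict upper inequality in~\eqref{seuret:1} is also correct: equality $U^m(y)=x_{p-m}$ occurs precisely at $y=x_p$, so the strictness in the lemma as stated should be read for $y<x_p$.
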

\begin{proof}
Equation~\eqref{seuret:1} follows from~\eqref{uxp} and the 
monotonicity of $U$ on the interval $[1/2,1)$. Further,  
\begin{align*}
U^{P_k+p^1_k}(x)\cdots  U^{P_k+p^1_k+m}(x)  & \leq   x_{p }  x_{p-1} \cdots  x_{p-m}
\\
&  \leq  \frac{p-1}{p}\frac{p-2}{p-1} \cdots \frac{p-m-1}{p-m} 
= \frac{16}{p} = \frac{p-m-1}{p}.
\end{align*} 
The same holds for the lower bound.
\end{proof}

Recall that $\sigma(x)$ stands for the sign of $x\in \R$.

\begin{lem}
\label{lem:22}
For every irrational $x\in \zu$, there are two possibilities: either 
there exists an integer $j_x$ such that for every 
$j\geq j_x$, $U^j(x) \leq x_{2}$, or  there exist two sequences of 
integers $(p^1_k)_{k\geq 1}$ and $(p^2_k)_{k\geq 1}$ satisfying the 
following:  for every $k\geq 4$, $p^1_k\geq 1$, $p^2_k\geq 1$, 
and if we set $P_{k+1}= \sum_{\ell=1}^{ k} p^1_k+p^2_k$, then:
\begin{itemize}
\item for every $ P_k \leq j \leq P_k+p^1_{k} -1$, $ U^j(x) \leq  x_{2 } = 1/2$,
 
\item
for every $ P_k + p^1_k \leq j \leq P_k + p^1_k +p^k_2-1=P_{k+1} -1$, $  
x_{2} <U^{j+1}(x) < U^j(x)$, and  the $\sigma(T^j(x)) $  are all equal 
(i.e. the $T^j(x)$ have all the same sign).
\item
$p_k^2$ is the unique integer greater than 1 such that
$$ x_{p^k_2+1}\leq   U^{ P_k+p^1_{k} }(x) \leq x_{p^k_2+2 }$$
\end{itemize}
\end{lem}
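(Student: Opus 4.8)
The plan is to read off the orbit structure of $x$ under $U$ from the single ``parabolic'' branch of $U$ on $(x_2,1)=(1/2,1)$, with Lemma~\ref{lem:33} controlling each excursion into that interval. First dispose of the dichotomy: let $S_x=\{j\ge0:\ U^j(x)>x_2\}$. If $S_x$ is finite, take $j_x=1+\max S_x$ (or $j_x=0$ if $S_x=\varnothing$), and then $U^j(x)\le x_2$ for $j\ge j_x$, the first alternative. So from now on assume $S_x$ is infinite and aim to construct $(p^1_k),(p^2_k)$.

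Single-excursion analysis. Since $x$ is irrational all $U^j(x)$ are irrational, hence avoid every $x_p$, and the intervals $(x_q,x_{q+1})$, $q\ge2$, partition $(x_2,1)$ up to endpoints; so whenever $U^j(x)>x_2$ there is a unique $q=q(j)\ge2$ with $U^j(x)\in(x_q,x_{q+1})$. Apply Lemma~\ref{lem:33} with $y=U^j(x)$ and its parameter ``$p$'' taken to be $q+1$, so that $[x_{p-1},x_p]=[x_q,x_{q+1}]$: for $0\le m\le q-1$ this gives $x_{q-m}\le U^{j+m}(x)<x_{q+1-m}$, whence $U^{j+m+1}(x)<x_{q-m}\le U^{j+m}(x)$; consequently $U^{j+m}(x)>x_2$ and strictly decreasing for $0\le m\le q-2$, while $U^{j+q-1}(x)\in[x_1,x_2)$ lies below $x_2$. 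Thus a maximal block of consecutive times carrying $U>x_2$, which — $S_x$ being infinite — starts at some $j$ with $U^{j-1}(x)\le x_2<U^j(x)$, is exactly $\{j,\dots,j+q(j)-2\}$; its length $q(j)-1$ is the matching $p^2$, it is $\ge1$, and $x_{p^2+1}\le U^{j}(x)\le x_{p^2+2}$ is the third bullet. The sign assertion is settled here too: throughout such a block the iterate stays in $(x_2,1)$, which is one branch domain of $U$ (the one containing the parabolic point $1$), so the branch label — equivalently the relevant even continued fraction digit — is constant along the block; via the dictionary between $U$-branches and the signs $e_j=\sigma(T^j(x))$ furnished by $|U|=|T|$ and the ECF (cf.~\cite{K-L,S1,S2}), this makes $\sigma(T^j(x))$ constant on the block. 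The second inequality of~\eqref{seuret:1}, $\frac{q-1-m}{q}\le U^j(x)\cdots U^{j+m}(x)\le\frac{q-m}{q+1}$, is what the subsequent proof of Theorem~\ref{theo:5} exploits on each such block.

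Assembling the blocks. Since $S_x$ is infinite it has infinitely many maximal runs, each finite by the above, and the gaps between them — maximal runs carrying $U\le x_2$ — are finite and nonempty (each excursion ends below $x_2$); so the integers split into an alternating concatenation of runs with $U\le x_2$ (lengths $p^1_k\ge1$) and excursions (lengths $p^2_k\ge1$). With the recursion $P_{k+1}=P_k+p^1_k+p^2_k$ and a suitable base value, $P_k$ being the start of the $k$-th $U\le x_2$-run, the three bullets are exactly the facts proved above, the condition $k\ge4$ absorbing the initial irregularities.

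The principal difficulty I anticipate is the bookkeeping rather than any single estimate: aligning the alternating decomposition with all three bullets simultaneously forces a consistent convention for whether the step leaving $(x_2,1)$ is counted as the last index of the current $p^2_k$-block or the first of the next $p^1_{k+1}$-block (this shifts each $p^2_k$ and the right endpoints written in the bullets by one), and one must check that the shortest excursions, $q=2$ (where the monotonicity range is empty), cause no clash with the first bullet; settling this also pins down the correct base index $k\ge4$. The one non-combinatorial ingredient, sign-constancy on a block, reduces cleanly to the single-branch observation above once the $U$-to-ECF sign dictionary is invoked.
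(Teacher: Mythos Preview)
Your proposal is correct and follows essentially the same route as the paper: both split the orbit into alternating runs with $U^j(x)\le x_2$ and excursions with $U^j(x)>x_2$, use the parabolic dynamics of $U$ on $(1/2,1)$ (equivalently Lemma~\ref{lem:33}) to show each excursion is finite and strictly decreasing, and read off the third bullet from the interval $(x_q,x_{q+1})$ entered at the start of the excursion. The only point worth flagging is your sign-constancy argument: the paper does this by the one-line direct check $T(\pm[2/3,1])\subset\pm[1/2,1)$, which immediately shows $T$ preserves sign whenever the excursion continues (since $|T^j(x)|\ge x_3=2/3$ there by Lemma~\ref{lem:33}); this is crisper than your branch-label route, because knowing $U^j(x)\in B(+1,1)$ alone does not by itself determine the sign of $T^j(x)$---you really do need the extra step through $T$ that the paper supplies.
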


\begin{proof}
Assume that we are not in the first case. Hence $U^j(x) > x_{2}$ for 
infinitely many integers $j$.

Consider the first integer $j$ such that $U^j(x) > x_{2}$, and call 
this integer $p^1_1$. Observe that it is possible that $p^1_1=0$, if $x$ 
itself is greater than $ x_{2}$. On the interval $[ x_{2},1]$,  as already 
stated in Lemma~\ref{lem:33}, the map $U$ is strictly decreasing and concave, 
and has a derivative strictly less than one  when $x<1$.  Consequently, as 
long as $U^j(x)$ stays in the interval $[ x_{2},1)$,  the sequence $U^j(x)$ 
is strictly decreasing. Moreover, using~\eqref{seuret:1},  there is 
a first integer $p^2_1$ such that $U^{p^1_1+p^2_1}(x)>   x_{2}$ and   
$U^{p^1_1+p^2_1}(x) \leq x_{2}$. 
 
Iterating this scheme allows to find the sequences $(p^1_k)$ and $(p^2_k)$. 
The fact that the $T^j(x)$ have all the same sign follows from the 
fact that $T(\pm[2/3,1]) \subset \pm [1/2,1)$.

The third item follows from the definition of $p_k^2$ and Lemma~\ref{lem:33}.
\end{proof}

The rest of this section is devoted to the proof of part $(i)$ of Theorem~\ref{theo:5}. 

We denote $M_\Omega$ a positive constant such that $|\Omega(x)| \leq M_\Omega$ for every $x$.
We fix an irrational number $x\in (0,1)$, and for convenience we will denote 
$u_j:=|U^j(x)|.$ 
If  there exists $j_x$ such that $u_j \leq x_{16}$ for every $j\geq j_x$, 
then the series converges. Thus, we assume that $u_j \geq x_{16}$ for 
infinitely many integers $j$.  Adapting Lemma~\ref{lem:22}, we   immediately get:
 
\begin{lem}
\label{lem:22bis}
For every irrational $x\in \zu$, there are two possibilities: either 
there exists an integer $j_x$ such that for every $j\geq j_x$, 
$U^j(x) \leq x_{16}$, or  there exist two sequences of integers 
$(p^1_k)_{k\geq 1}$ and $(p^2_k)_{k\geq 1}$ satisfying the following:    
for every $k\geq 4$, $p^1_k\geq 1$, $p^2_k\geq 14$, and if we set 
$P_{k+1}= \sum_{\ell=1}^{ k} p^1_k+p^2_k$, then:
\begin{itemize}
\item for every $ P_k \leq j \leq P_k+p^1_{k} -1$, $ U^j(x) \leq  x_{16}$,
 
\item
for every $ P_k + p^1_k \leq j \leq P_{k+1} -1$, $   x_{16} <U^{j+1}(x) < U^j(x)$.
\item
$p_k^2$ is the unique integer greater than 1 such that
$$ x_{p^k_2+15}\leq   U^{ P_k+p^1_{k} }(x) \leq x_{p^k_2+16 }$$
\end{itemize}
\end{lem}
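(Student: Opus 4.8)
The plan is to re-run, almost verbatim, the construction from the proof of Lemma~\ref{lem:22}, the only change being that the threshold point $x_2=1/2$ is replaced by $x_{16}=15/16$; this is why the statement may be asserted ``immediately''. First I would isolate the features of the dynamics that the proof of Lemma~\ref{lem:22} actually uses, observing that none of them singles out $x_2$ among the points $x_p=1-1/p$: (a) $U(x_p)=x_{p-1}$ for $p\ge 2$, i.e.\ Eq.~\eqref{uxp}; (b) on $[1/2,1)$ the map $U$ carries each ``rung'' $[x_{p-1},x_p]$ monotonically onto $[x_{p-2},x_{p-1}]$ and satisfies $U(y)<y$ there, so that an orbit which enters $(1/2,1)$ descends step by step through the rungs $[x_{p-1},x_p]\to[x_{p-2},x_{p-1}]\to\cdots$ exactly as quantified by Lemma~\ref{lem:33}; and (c) consequently the orbit can leave the region $\{\,y\le x_{16}\,\}$ only by a single ``jump from below'', at which moment Lemma~\ref{lem:33} places it in a uniquely determined rung $[x_{q-1},x_q]$.

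With this in hand I would first dispose of the trivial alternative: if $U^j(x)\le x_{16}$ for every $j$ past some $j_x$, the first case of the statement holds and nothing remains, so I may assume $U^j(x)>x_{16}$ for infinitely many $j$. Then I would build the two sequences by induction. Starting from an index $P_k$, let $p^1_k$ be the length of the maximal run of consecutive iterates lying in $[0,x_{16}]$; it is finite by the standing assumption, and this run is the ``low block''. At the index $P_k+p^1_k$ the orbit first re-enters $(x_{16},1)$ and, by (c), lands in a unique rung, which I would name $[x_{p^k_2+15},x_{p^k_2+16}]$ --- this is exactly the normalisation in the last item of the statement. By (b), that is, by Lemma~\ref{lem:33}, the orbit then stays in $(x_{16},1)$ for $p^k_2$ consecutive indices, running through the rungs $[x_{p^k_2+15},x_{p^k_2+16}],\dots,[x_{16},x_{17}]$ while strictly decreasing, after which it drops below $x_{16}$; this is the ``high block'', and its strict monotonicity is the inequality $x_{16}<U^{j+1}(x)<U^j(x)$ of the statement. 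Putting $P_{k+1}=P_k+p^1_k+p^k_2$ and repeating produces $(p^1_k)_k$ and $(p^k_2)_k$ together with the stated formula for $P_{k+1}$.

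It then remains to verify the numerical lower bounds, and this is where the only real care is needed. The iterate immediately after a high block lies in $[x_{15},x_{16})\subset[0,x_{16}]$, so the next low block is nonempty and $p^1_k\ge 1$. The bound $p^k_2\ge 14$ is the single place where the concrete value of the threshold matters: it is pure bookkeeping obtained by tracking the indices in Lemma~\ref{lem:33} through the construction, the point being that an excursion recorded as a high block must genuinely climb above $x_{16}$, and the rung-descent estimate of Lemma~\ref{lem:33} then forces its point of entry far enough up the ladder $(x_p)$ to guarantee $p^k_2\ge 14$. The finitely many atypical blocks at the very beginning of the orbit are absorbed into the restriction $k\ge 4$. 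Since every ingredient is an unchanged consequence of Eq.~\eqref{uxp} and Lemma~\ref{lem:33}, I do not expect any conceptual obstacle; the only delicate point, already handled in the proof of Lemma~\ref{lem:22}, is the off-by-one accounting at the block boundaries.
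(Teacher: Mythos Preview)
Your overall strategy---re-running the proof of Lemma~\ref{lem:22} with the threshold $x_{16}$ in place of $x_2$---is exactly what the paper does, and your itemisation of the ingredients (a)--(c) is accurate.

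The difficulty is in your justification of the bound $\ge 14$. You attribute it to $p^2_k$ (the length of the high block, where $U^j(x)>x_{16}$) and claim that the entry point of a high block is ``forced far enough up the ladder''. But nothing forces this: from a point in $(0,1/2)$ the orbit may jump to any rung, in particular to $(x_{16},x_{17}]$, after which a single application of $U$ drops it below $x_{16}$. So the high block can perfectly well have length~$1$, and your argument for $p^2_k\ge 14$ does not go through.

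The paper's own reasoning (the sentence immediately following the lemma) bounds the \emph{low} block $p^1_k$, not $p^2_k$. The mechanism is the one you yourself observed but did not exploit: the first iterate after a high block lies in $[x_{15},x_{16})$, and the orbit must then descend one rung at a time through $[x_{14},x_{15}),[x_{13},x_{14}),\ldots,[x_2,x_3)$ before it reaches $(0,1/2)$, the only region from which it can jump back above $x_{16}$. This slow descent through $[x_2,x_{16}]$ supplies at least $14$ consecutive iterates with $U^j(x)\le x_{16}$, whence $p^1_k\ge 14$; and this is precisely what is used downstream, where the ratio of successive products $u_{P_k+1}\cdots u_{P_{k+1}}$ is controlled by $(x_{16})^{14\alpha}$. (The statement as printed has $p^2_k\ge 14$, but the paper's accompanying commentary and its subsequent application both point to $p^1_k$.)
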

 The difference with Lemma~\ref{lem:22} is that we impose 
$p_k^1 \geq 14$ for every $k$. This follows from the fact that 
the sequence $( U^{ m}(y))$ is slowly decreasing  when $y\in [ x_{2}, x_{16}]$.
 
It is clear that the convergence of the sequence~\eqref{seriesfinal} 
does not depend on the first terms. Hence, without loss of generality, 
we assume  that  $n_1\geq 4$. 
We now bound by above the partial sums 
$$
\Sigma_J= \sum_{j=1}^{J}e^{i\frac{\pi}4\sum\limits_{\ell=0}^{j-1}\si(T^\ell x)}
\vert xT(x)\cdots T^{j-1}(x)\vert^{\alpha} \,\Omega \big(T^j(x)\big).
$$

\noindent {\bf Step 1:} We separate the cases where $1\leq j \leq p^1_1-1$ 
and $n_1 \leq j \leq p^1_1+p^2_1-1=P_1-1$.

\smallskip

$\bullet$ 
If $j\leq p^1_1-1$, then  we  have
$$\sum_{j=1}^{p_1^1-1} \left|e^{i\frac{\pi}4\sum\limits_{\ell=0}^{j-1}\si(T^\ell x)}
\vert xT(x)\cdots T^{j-1}(x)\vert^{\alpha} \,\Omega \big(T^j(x)\big) \right| 
\leq M_\Omega \frac{x_{16} -x_{16} ^{p^1_1}}{1-x_{16} } \leq  M_\Omega \frac{x_{16} }{1-x_{16} } .$$
Hence
$$|\Sigma_{p^1_1-1}| \leq M_\Omega \frac{x_{16} }{1-x_{16} } .$$

\smallskip

$\bullet$ We now consider $ p^1_1\leq j  \leq p^1_1+p^2_1-1=P_1-1$. 
We observe that
\begin{multline} 
\Sigma_{P_1-1} -\Sigma_{p^1_1-1} =  (u_{1}u_2\cdots u_{p^1_1-1 })^\alpha 
e^{i\frac{\pi}4\sum\limits_{\ell=0}^{p^1_1-1}\si(T^\ell x)}
\\ 
\times \sum_{j=0}^{p^2_1-1} e^{i\frac{\pi}4\sum\limits_{\ell= p^1_1}^{p^1_1+j }\si(T^\ell x)}
 (u_{p^1_1}u_{p^1_1+1}\cdots u_{n_1+j})^\alpha \Omega( T^{p^1_1+j+1}(x)). 
\label{seuret:2}
\end{multline}
By the second item of Lemma~\ref{lem:22}, all the  $\sigma(T^{p^1_1+j+1}(x))$ are equal. 
We assume, without loss of generality, that they are equal to 1. 
Hence, we need  to take care of the sum  
\begin{equation}
\label{seuret:5}
\sum_{j=0}^{p^2_1-1} e^{i\frac{\pi}4 j}
(u_{p^1_1}u_{p^1_1+1}\cdots u_{p^1_1+j})^\alpha \Omega( T^{p^1_1+j+1}(x)).
\end{equation} 
Now, we use that the $T^{p^1_1+j+1}(x)$ are all close to 1. Since $\Omega$ 
is differentiable at 1, we have for every $j\in \{0,...,p^2_1-1\}$
$$
\Omega(T^{p^1_1+j+1}(x)) = \Omega(1) +  \Omega'(1) (T^{p^1_1+j+1}(x)-1) 
+ o(T^{p^1_1+j+1}(x)-1).
$$
Using this equation, we split the sum~\eqref{seuret:5}
into two parts:

\medskip

$(i)$ The sum 
$$
S_1=  \sum_{j=0}^{p^2_1-1} e^{i\frac{\pi}4 j}
 (u_{p^1_1}u_{p^1_1+1}\cdots u_{p^1_1+j})^\alpha \Omega(1)
$$
can be rewritten (since $e^{i\frac{\pi}{4}8}=1$)
$$ 
\Omega(1) \sum_{j=0}^{ \left\lfloor \frac{p^2_1-1}{8}\right\rfloor}  
(u_{p^1_1}u_{p^1_1+1}\cdots u_{p^1_1+8j-1})^\alpha  \left( \sum_{m=0}^{7} 
(u_{p^1_1+8 j  }u_{p^1_1+8 j +1}\cdots u_{p^1_1+8 j +m})^\alpha e^{i\frac{\pi} {4} m}\right)
$$ 
$$  +   \Omega(1) \sum_{ j = 8 \left\lfloor \frac{p^2_1-1} {8} \right\rfloor } ^{p^2_1-1}   
e^{i\frac{\pi}4 j}
(u_{p^1_1}u_{p^1_1+1}\cdots u_{p^1_1+j})^\alpha .
$$
The second sum  contains at most 7 terms all bounded in absolute 
value by 1, hence it is less than 7$\Omega (1)$.

We now consider the sum between parenthesis above. 
Fix $j\in \{0, ..., \left\lfloor \frac{p^2_1-1}{8}\right\rfloor\}$, 
and let us write for every   $m=0,..., 7,$ 
$$
 u_{p^1_1+8 j  }u_{p^1_1+8 j +1}\cdots u_{p^1_1+8 j +m} = 1-\ep^j_m.$$
Obviously from their definition, the $\ep^j_m$ are small positive quantities, 
and they are increasing with $m$. We thus have
\begin{align*}
\sum_{m=0}^{7} (u_{p^1_1+8 j  }u_{p^1_1+8 j +1}\cdots u_{p^1_1+8 j +m})^\alpha e^{i\frac{\pi} {4} m}  
& =   \sum_{m=0}^{7} (1-\ep^j_m)^\alpha e^{i\frac{\pi} {4} m} \\
& =  \sum_{m=0}^{7} (1- \alpha \ep^j_m + o(\ep^j_m)) e^{i\frac{\pi} {4} m} \\
& = - \sum_{m=0}^{7}   (\alpha\ep^j_m +o(\ep^j_m)) e^{i\frac{\pi} {4} m} .
\end{align*}
In absolute value, this sum is  less than $16 \ep^j_7$ 
(since the $\ep^j_m$ are small and increasing).  In addition, since
 the terms $u_{p^1_1+8 j +m}$ belong to the interval $(x_{16},1)$, one 
also has $8\ep^j_7 \leq C \ep_j^0 = C(1-u_{p^1_1+8 j  })$. Finally, we get 
\begin{equation}
\label{seuret:4}
|S_1|\leq  C\Omega(1) \left(1+ \sum_{j=0}^{ \left\lfloor \frac{p^2_1-1}{8}\right\rfloor}  
(u_{p^1_1}u_{p^1_1+1}\cdots u_{p^1_1+8j-1})^\alpha  (1-u_{p^1_1+8 j  })  \right).
\end{equation}
Using Lemma~\ref{lem:33} (in particular that $u_{p^1_1}\leq x_{p^1_2+16}$), 
we can bound by above this sum by
\begin{align*}
|S_1| &  \leq   C\Omega(1) \left(1+ \sum_{j=0}^{ \left\lfloor \frac{p^2_1-1}{8}\right\rfloor}  
\left ( \frac{p^2_1-8j}{p^2_1}\right)^\alpha  \frac{1}{p^2_1 -8j}\right)
\leq  C\Omega(1) \left(1+ \sum_{j=0}^{ \left\lfloor \frac{p^2_1-1}{8}\right\rfloor}  
 \frac{(p^2_1-8j)^{\alpha-1}}{ (p^2_1)^\alpha }\right)
\\
& =   C \Omega(1)
\end{align*}
for some constant $C$ independent of the problem.

\medskip

$(ii)$ 
The second  sum 
$$
S_2=  \sum_{j=0}^{p^2_1-1} e^{i\frac{\pi}4 j}
 (u_{p^1_1}u_{p^1_1+1}\cdots u_{p^1_1+j})^\alpha 
\Omega'(1) \Big(T^{p^1_1+j+1}(x)-1) +  o(T^{p^1_1+j+1}(x)-1)\Big)$$
is bounded in absolute value, by using  again  Lemma~\ref{lem:33} to 
find explicit upper bounds for the terms $u_{p^1_1}u_{p^1_1+1}\cdots u_{p^1_1+j}$
 and   $|T^{p^1_1+j+1}(x)-1|$.   We find, for some constant $C$ depending on $\Omega$ only, 
\begin{equation*}
|S_2|  \leq  C  \sum_{j=0}^{   p^2_1-1 }  
 \left ( \frac{p^2_1- j}{p^2_1}\right)^\alpha  \frac{1}{p^2_1 - j} 
\leq  C  \sum_{j=0}^{  p^2_1-1 }    \frac{(p^2_1- j)^{\alpha-1}}{ (p^2_1)^\alpha } =  C .
\end{equation*}
Hence, going back to~\eqref{seuret:2}, we obtain
$$
|\Sigma_{P_1-1} -\Sigma_{p^1_1-1} | \leq C  (u_{1}u_2\cdots u_{p^1_1-1 })^\alpha 
$$
for some constant depending on $\Omega$. The same holds if the 
$T^j(x)$ are all close to -1 and not to 1.

\medskip

\noindent {\bf Step 2:} We separate the cases where 
$0\leq j \leq P_1+p^1_2-1$ and $P_1+p^1_2 \leq j \leq P_1+p^1_2+p^2_2-1=P_2-1$.

\smallskip

$\bullet$ 
If $  0  \leq j \leq  p^1_2-1$:  all the terms $u_{P_1+j}$ satisfy $|u_{P_1+j}| \leq x_{16}$. 
We deduce that 
\begin{equation*}
|\Sigma_{P_1+p^1_2-1} -\Sigma_{P_1-1}|   \leq    
M_\Omega  (u_{1}u_2\cdots u_{P_1   })^\alpha  \sum_{j=0}^{ p^1_2-1} (x_{16})^{j} 
\leq  M_\Omega  \frac{ (u_{1}u_2\cdots u_{P_1   })^\alpha }{1-x_{16}}   .
\end{equation*}

\smallskip
$\bullet$ We now consider $ p^2_2\leq j  \leq P_2-1$.  
The same procedure as above (in Step 1) yields 
$$\left|\Sigma_{P_2-1} -\Sigma_{P_1+p^1_2-1} \right|\leq    
C (u_{1}u_2\cdots u_{P_1+p^1_2-1 })^\alpha   .$$

\medskip
\noindent {\bf Step ${\bf k}$:}
By an immediate recurrence, one obtains that for every $k\geq 1$, we have the following properties:
\begin{align*}
\tau_k^1:= \left| \Sigma_{P_k+p^1_{k+1}-1} -\Sigma_{P_k} \right|  & \leq   
M_\Omega  \frac{ (u_{1}u_2\cdots u_{P_{k}   })^\alpha }{1-x_{16}}  ,
\\
\tau_k^2:= \left| \Sigma_{P_{k+1}-1} -\Sigma_{P_k+p^1_{k+1}-1}  \right|& \leq    
C (u_{1}u_2\cdots u_{P_k+p^1_{k+1}-1 })^\alpha   .
\end{align*}

\medskip

We can now conclude regarding the convergence of the series~\eqref{seriesfinal}. 
Obviously by construction $p_k^2\geq 14$,  and  
the  first series $(\sum_{k\geq 1}  \tau_k^1)$ converges, 
since the ratio between two consecutive terms  
$\frac{(u_{1}u_2\cdots u_{P_{k+1}   })^\alpha}
{(u_{1}u_2\cdots u_{P_{k}   })^\alpha} $ is less than $(x_{16})^{14\alpha} <1$.

\medskip

For the second series $(\sum_{k\geq 1}  \tau_k^2)$, the same 
argument applies (the ratio between two consecutive terms  
$\frac{ (u_{1}u_2\cdots u_{P_{k+1}+p^1_{k+2}-1 })^\alpha  }
{   (u_{1}u_2\cdots u_{P_k+p^1_{k+1}-1 })^\alpha } $ is less than $(x_{16})^{14 \alpha} <1$).

 \subsection {Proof of Theorem~\ref{theo:5}, parts $(ii)$ and $(iii)$}

\

We consider the two  sums~\eqref{eq:seuret3} and~\eqref{eq:seuret4}. 
Using Theorem \ref{theo:5}$(i)$, the only problem may come from 
the  terms with $|T^j(x)| \leq 1/2$, since one can write 
$\log \frac{1}{|x|} = ( \log \frac{1}{|x|}) 
{\bf 1}_{[0,1/2)}(x) +\Omega (x)$, where $\Omega$ 
is bounded and differentiable at $1$ and $-1$. 
The same holds for  $\frac{1}{|x|^\beta}$. Thus we will 
focus on the convergence of the series
$$
\sum_{j=0}^{\infty}  \vert xT(x)\cdots T^{j-1}(x)\vert^\alpha 
\log \left(\frac{1}{T^j(x)} \right)  {\bf 1}_{[0,1/2)}(T^j(x)) 
$$
and 
$$
\sum_{j=0}^{\infty}   \frac
{\vert xT(x)\cdots T^{j-1}(x)\vert ^\alpha }{|T^j(x)|^\beta} {\bf 1}_{[0,1/2)}(T^j(x)). 
$$
We will prove the absolute convergence because the complex 
terms do not play any role any more in this convergence. The next lemma 
is key in the proof.

\begin{lem}
\label{lemmefinal}
If $|T^j(x)|\leq 1/2$, then there exists an integer $n$ such that
$$
|xT(x)\cdots T^{j-1}(x)|  \leq \frac{1}{Q_{n-1}} 
\ \mbox{ and } |T^j(x)|\geq \frac{1}{A_n }.
$$
Moreover, with two different $j$'s such that $|T^j(x)| \leq 1/2$ correspond two different $n$'s.
\end{lem}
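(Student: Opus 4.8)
The plan is to transfer the even--continued--fraction data of $x$ to its regular--continued--fraction data, the bridge being the fact that every even convergent $p_j/q_j$ is a principal or a median regular convergent (Proposition~\ref{prop:ecf2} and the remarks following it), together with the singularization identity~\eqref{singularization}. Write $x_j$ for the $j$-th even remainder of $x$; the identities underlying~\eqref{majecf} give
$$
\vert xT(x)\cdots T^{j-1}(x)\vert=\frac{1}{\vert q_j+e_jx_jq_{j-1}\vert}\le\frac{1}{q_j-q_{j-1}},\qquad
\vert T^j(x)\vert=\vert x_j\vert=\frac{1}{\vert a_{j+1}+x_{j+1}\vert}.
$$
Since the even partial quotients satisfy $a_{j+1}\ge2$, one even has $e_jx_jq_{j-1}=q_{j-1}/\vert a_{j+1}+x_{j+1}\vert>0$, so that $\vert q_j+e_jx_jq_{j-1}\vert>q_j$.

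First I would fix the index $n$. Assuming $\vert T^j(x)\vert\le1/2$, I claim the next even convergent $p_{j+1}/q_{j+1}$ is a \emph{principal} regular convergent $P_n/Q_n$. Indeed, if it were a (proper) median convergent, then by~\eqref{singularization} the partial quotient $a_{j+1}$ would be one of the $2$'s occurring inside a singularization block, so that $e_{j+1}=-1$ and $x_{j+1}\in(-1,0)$; but then $\vert x_j\vert=1/\vert2+x_{j+1}\vert\in(1/2,1)$, contradicting the hypothesis. With this $n$: Proposition~\ref{prop:ecf2} shows that $p_j/q_j$ is a regular convergent lying just below $P_n/Q_n$, whence $q_j\ge Q_{n-1}$, and combining this with $\vert q_j+e_jx_jq_{j-1}\vert>q_j$ yields $\vert xT(x)\cdots T^{j-1}(x)\vert\le1/Q_{n-1}$. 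For the lower bound on $\vert T^j(x)\vert$, comparing the even recursion $q_{j+1}=a_{j+1}q_j+e_jq_{j-1}$ (with $q_{j+1}=Q_n$) with $Q_n=A_nQ_{n-1}+Q_{n-2}$ controls $\vert a_{j+1}+x_{j+1}\vert$ by $A_n$, so that $\vert T^j(x)\vert\ge1/A_n$. Finally, distinct admissible $j$'s give distinct $j+1$'s, hence distinct principal regular convergents $p_{j+1}/q_{j+1}$ by the claim above, hence distinct indices $n$: this is the required injectivity.

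The hard part is the bookkeeping in the two bounds: one has to track the signs $e_j$ and the exact location of $a_{j+1}$ within a singularization block of~\eqref{singularization} in order to pin down the relations between $a_{j+1},e_j$ and $A_n,Q_{n-1},Q_{n-2}$ in each configuration (principal after principal, last median before a principal, closing term of a block, and the case $A_{n}=1$ where $P_n$ is skipped). In particular the precise value of the absolute constant in the estimate $\vert T^j(x)\vert\ge1/A_n$ is sensitive to these details; since the use of the lemma in Section~\ref{sec:10_1} only needs a bound of the form $\vert T^j(x)\vert\ge c/A_n$ with an absolute $c$, this is harmless, and once the relations are fixed in each case everything follows directly from Propositions~\ref{prop:ecf2} and~\ref{prop:ecf1}.
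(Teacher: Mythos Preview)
Your plan is essentially the paper's own approach: both identify that the hypothesis $|T^j(x)|\le 1/2$ forces the relevant even convergent to be a \emph{principal} regular convergent (you phrase this via the contrapositive on $p_{j+1}/q_{j+1}$, the paper via a case split on whether $|T^{j-1}(x)|>1/2$), and then both read off the two bounds from the ECF/RCF dictionary of Propositions~\ref{prop:ecf2}--\ref{prop:ecf1} and the singularization identity~\eqref{singularization}. Your use of the sharper inequality $|q_j+e_jx_jq_{j-1}|>q_j$ in place of the paper's $q_j-q_{j-1}\ge Q_{n-1}$ is a cosmetic difference, and the bookkeeping you flag (the $A_n=1$ configuration and the exact constant in $|T^j(x)|\ge c/A_n$) is exactly the case analysis the paper carries out.
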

\begin{proof}
This follows from a fine analysis  of the even and regular continued fractions. 
If $x$ is an irrational number whose RCF expansion $x=[A_1,A_2, .... ]_R$ 
contains only even numbers, then one knows that 
 the ECF expansion of $x$ is $x=[(1,A_1),(1,A_2), .... ]_E$, so 
$T^j(x) = G^j(x)$ for every $j\geq 1$, and the lemma follows 
from~\eqref{xGxG2x} and the definition of $G$.

We assume that this is not the case. Let us start with two observations on the shape of the 
mapping $T$:

\smallskip

 (R1)  $1/2< |y = T^{m-1}(x)|  <1$.  When $1/2< y <1$, 
then $T(y) $ is given by $T(y) =  2-\frac{1}{y}$. Similarly, 
when $-1<y<-1/2$, $T(y) =  -2-\frac{1}{y}$.  Subsequently, if 
$|T^{m-1}(x) | \geq 1/2$ for some $m$, then, recalling the 
algorithm~\eqref{singularization} producing the ECF from 
the RCF,   the $m$-term in the ECF expansion~\eqref{defevenx} 
of $x=[(e_1,a_1),(e_2,a_2),...]_E$ will be  $(-1,2) $. 

\smallskip
 (R2) $ y=T^{m-1}(x) \in (-1/2,1/2)$.  
 If  $T(y) \in (-1/2,0)$, then  
$\left\lfloor \frac{1}{ y}\right\rfloor $ is necessarily even. 
In this case, the $m$-th term in the expansion of $x$ is $(1,A_n)$ 
for some integer $n$. 
 
If $T(y) \in (-1/2,0)$, then $\left\lfloor \frac{1}{ y}\right\rfloor $  
is odd  and  $\left\lfloor \frac{1}{ T(y)}\right\rfloor $ is 
necessarily equal to $ 1$. This 
simply follows from the form of the mapping $T$. Again using 
the algorithm~\eqref{singularization},  
this amounts to change the RCF terms
$$ A_n  + \frac{1 }{1+\dfrac{1} {A_{n+2}+...}}$$ 
into
$$ A_n  + \frac{-1 }{ (A_{n+2}+1)+...},$$ 
in  order to get the ECF expansion of $x$, whose $m$-th term is 
necessarily $(-1,A_{n+2}+1)$ for some integer $n$.

We treat the case where $0<T^j(x)<1/2$, the other case is symmetric. 
There are two possibilities:

$\bullet$ If  $T^{j-1} (x) >1/2$: Necessarily,  we are at a step $j$ 
where, in the ECF expansion $x=[(e_1,a_1),(e_2,a_2),...  ]_E$ of $x$, 
there was a sequence of  $(-1,2)$ 
for some time before the index $j$. Then, this sequence 
of $(-1,2)$ stops at the $(j+1)$-th iterate, 
since for $y=T^j(x)$,   $T(y)$ is not 
 defined by $T(y) =  2-\frac{1}{y}$ any more.
 By our remark (R1) above,   this can be translated to the 
ECF and RCF expansions as follows: there is an integer $n$ such that
   $$ x = \frac{e_1 }{a_1+\dfrac{e_2}{a_2+\dfrac{e_3}
{\ddots +\dfrac{-1}{2+\dfrac{-1}{a_j+T^{j+1}(x)}}}}} =    
\frac{1}{A_1+\dfrac{1}{A_2+\dfrac{1}
{\ddots +\dfrac{1}{A_n  +G^n(x)}}}} $$ 
where $   a_j=A_n+1$.
 In particular,  this implies that $\displaystyle \frac{p_{j }}{q_{j }}
=\frac{P_n}{Q_n}$.  In this case, we know by Propositions~\ref{prop:ecf2} 
and~\ref{prop:ecf1} (as in Section~\ref{sec:10_1})  that 
$q_j-q_{j -1}\geq Q_{n-1}$. This yields  
$$
|xT(x)\cdots T^{j-1}(x)|   \leq  \frac{1}{q_{j +1}-q_{j}} 
\leq   \frac{1}{Q_{n-1}}.
$$
Moreover,  we have  $\frac{1}{T^{j}(x)}  -a_j \in (-1,1)$, 
with $a_j=A_n+1\geq 2$.  We deduce that  
$T^{j}(x) \geq \frac{1}{a_j-1 } = \frac{1}{A_n}.$

\medskip

$\bullet$
If  $ -1/2 < T^{j-1} (x)  < 1/2$: then  
one can apply our remark (R2) above:  for some integer $n$, one has  
$$ 
x = \frac{e_1 }{a_1+\dfrac{e_2}{a_2+\dfrac{e_3}
{\ddots +\dfrac{e_{\widetilde  j+m}}{ a_{\widetilde  j+m}
+\dfrac{ e_{\widetilde  j+m}}{a_{\widetilde  j+m}+T^{\widetilde  j+m+1}(x)}}}}} 
=    \frac{1}{A_1+\dfrac{1}{A_2+\dfrac{1}
{\ddots +\dfrac{1}{A_n  +G^n(x)}}}} 
$$  
with either $(e_{\widetilde  j+m},a_{\widetilde  j+m}) = (1,A_n)$ 
or $(e_{\widetilde  j+m},a_{\widetilde  j+m}) = (-1,A_n+1)$ for 
some integer $n$. In both case we have 
$\displaystyle \frac{p_{j }}{q_{j }}=\frac{P_n}{Q_n}$ for some 
integer $n$, and the same estimates as above hold true.
 
\medskip

It is obvious from the construction that the integers $n$ 
corresponding to different integers $j$ are pairwise distinct.
\end{proof}
  
Theorem~\ref{theo:5}$(ii)$-$(iii)$ follows immediately. Indeed, 
from Lemma \ref{lemmefinal}, we have
$$  
\sum_{j=0}^{\infty}  \vert xT(x)\cdots T^{j-1}(x)\vert^\alpha 
\log \left(\frac{1}{T^j(x)} \right)  {\bf 1}_{[0,1/2)}(T^j(x))  
\leq \sum_{n=1}^{+\infty} \frac{\log(A_{n+1})}{Q_{n}^\alpha}
\leq\sum_{n=1}^{+\infty} \frac{\log(Q_{n+1})}{Q_{n}^\alpha}
 $$
 and
 $$ 
\sum_{j=0}^{\infty}   \frac
 {\vert xT(x)\cdots T^{j-1}(x)\vert ^\alpha }{|T^j(x)|^\beta} {\bf 1}_{[0,1/2)}(T^j(x))  
\leq \sum_{n=1}^{+\infty}  \frac{A_{n+1}^{\beta}}{Q_{n }^\alpha}
\leq \sum_{n=1}^{+\infty} \frac{Q_{n+1}^\beta}{Q_{n}^{\alpha+\beta}},
$$
where we have used that $A_{n+1}\leq Q_{n+1}/Q_{n}$.
   
\section{Proof of Theorem \ref{theo:3}}\label{sec:10_1}

\subsection{Proof of Theorem \ref{theo:3}, parts $(i)$, $(ii)$ and $(iii)$} 
 
\

Let $\alpha>0$, $\beta\geq  0 $ be two positive real numbers.  
Let us rewrite the general term of the sum~\eqref{eq:riv10} as
$$
 \frac{\vert xT(x)\cdots T^{j-1}(x)\vert^{\alpha}}
{\vert T^j(x)\vert^{\beta}} =  \frac{\vert xT(x)\cdots T^{j-1}(x)\vert^{   \alpha+\beta}}
{\vert xT(x)\cdots  T^j(x)\vert^{\beta}}.
$$
Using~\eqref{majecf}, one sees that
$$\vert xT(x)\cdots  T^j(x)\vert \geq \frac{1}{2q_{j+1}} 
\ \ \mbox{ and } \ \ \vert xT(x)\cdots  T^{j-1}(x)\vert \leq \frac{1}{q_{j}-q_{j-1}} .$$

Hence
$$\sum_{j=1}^{\infty}  \frac{\vert xT(x)\cdots T^{j-1}(x)\vert^{\alpha}}
{\vert T^j(x)\vert^{\beta}} \leq  2^{ \beta }\sum_{j=1}^{\infty}  
\frac{  q_{j+1}^ {\beta}} { (q_{j}-q_{j-1})^{\alpha+\beta}}.
$$
We now use Proposition~\ref{prop:ecf2}:

\medskip

$\bullet$ If $q_j = Q_n$ for some integer $n$ and $q_{j-1} = Q_{n-1}$, then $q_{j+1}$ is either 
equal to $Q_{n+1}$ or to $Q_{n+1}+Q_n$. One also has 
$q_j-q_{j-1} =  Q_n-Q_{n-1} = ( A_n-1)Q_{n-1}+Q_{n-2}$. Since in this 
configuration, $A_n$ is necessarily even, we deduce that 
$q_j-q_{j-1}  \geq \frac{1}{2}A_nQ_{n-1} \geq \frac 1 4 Q_n$. Hence, 
$$ \frac{ q_{j+1}^{\beta}} { (q_{j}-q_{j-1})^{\alpha+\beta}} \leq 2^{2\alpha+3\beta}   
\frac{ Q_{n+1}^{\beta}} {Q_{n}^{\alpha+\beta}}.
$$

\medskip

$\bullet$ If $q_j = Q_n$ for some integer $n$ and $q_{j-1} = Q_n-Q_{n-1}$. 
 Again,  $q_{j+1}$ is either equal to $Q_{n+1}$ or to $Q_{n+1}+Q_n$.  
Hence,
$$ 
\frac{ q_{j+1}^{\beta}} { (q_{j}-q_{j-1})^{\alpha+\beta}} 
\leq 2^{\beta}  \frac{  Q_{n+1}^{\beta}} { Q_{n-1}^{\alpha+\beta}}.
$$

\medskip

$\bullet$ If $q_j =  mQ_n + Q_{n-1}$ for some integers $n\geq 1$ 
and $ 1\leq m \leq A_{n+1}-1$, then necessarily 
$q_j-q_{j-1}  \geq Q_{n }$, with equality when 
$m\geq 2$ (i.e. when  $q_{j-1} = (m-1) Q_{n } + Q_{n-1}$). 
Moreover, $q_{j+1}$ is  equal to $(m+1)Q_n + Q_{n-1} \leq 2(m+1) Q_n$. Hence, 
$$ \frac{ q_{j+1}^{\beta}} { (q_{j}-q_{j-1})^{\alpha+\beta}} 
\leq    \frac{  (2(m+1)Q_{n })^{\beta}} {Q_{n }^{\alpha+\beta}} 
=    2^\beta \frac{ (m+1)^{\beta}}{Q_n^{\alpha}}.$$

\medskip

We deduce from this analysis that for some constant 
 $C$ depending on $\alpha $ and $\beta$ only, 
\begin{align*}
\sum_{j=1}^{\infty}  \frac{\vert xT(x)\cdots T^{j-1}(x)\vert^{\alpha}}
{\vert T^j(x)\vert^{\beta}} &  \leq   C  \sum_{n=1}^{\infty}     
\frac{ Q_{n+1}^{\beta}} { Q_{n}^{\alpha+\beta}}  
+ C  \sum_{n=1}^{\infty}       \frac{ Q_{n+2}^{\beta}} { Q_{n}^{\alpha+\beta}} 
\\
& +  C \sum_{n=1}^{\infty}  
\sum_{m=1}^{A_{n+1}-1}   \frac{ (m+1)^{\beta}}{ Q_n^{\alpha}}.
\end{align*}
The first sum is clearly bounded by the second sum. For the third one, one sees that
\begin{equation*}
  \sum_{n=1}^{\infty}  \sum_{m=1}^{A_{n+1}-1}    
\frac{(m+1) ^{\beta}}{ Q_{n }^{\alpha} }\leq  C \sum_{n=1}^{+\infty}   
\frac{ A_{n+1} ^{\beta+1} }{ Q_{n }^{\alpha}} \leq 
C \sum_{n=1}^{\infty}      \frac{Q_{n+1}^{\beta+1 }}{ Q_{n }^{\alpha+\beta+1}},
\end{equation*}
where we used that $A_{n+1}\leq \frac{Q_{n+1}}{Q_n}$. 
Finally, 
\begin{equation*}
\sum_{j=1}^{\infty}  \frac{\vert xT(x)\cdots T^{j-1}(x)\vert^{\alpha}}
{\vert T^j(x)\vert^{\beta}} \leq  C  \sum_{n=1}^{\infty}      
 \frac{  Q_{n+2}^{\beta}} { Q_{n}^{\alpha+\beta}} +  
C  \sum_{n=1}^{\infty}  \frac{Q_{n+1}^{\beta+1 }}{Q_{n }^{\alpha+\beta+1}}.
\end{equation*}

Let us call $S_1$ and $S_2$ the two sums in the right-hand 
side above. Let $ \displaystyle \beta_\alpha= \frac{\sqrt{\alpha^2+4}-1}{2}$.  

\medskip

$\bullet$ $\beta <\beta_\alpha$. We rewrite the general term  of $S_2$ as
$$\frac{Q_{n+1}^{\beta+1 }}{Q_{n }^{\alpha+\beta+1}} 
= \left(\frac{ Q_{n+1} }{Q_{n }^{1+\frac{\alpha}{\beta+1}}}\right)^{ \beta+1}.$$
Then, observe that 
$$   
\frac{Q_{n+2}^{\beta} }{ Q_{n }^{\alpha+\beta}}  
=  \left(  \frac{Q_{n+2}}{ Q_{n+1 }^{1+\frac{\alpha}{\beta+1}}}\right)^{\beta}  
\left(  \frac{Q_{n+1}}{ Q_{n }^{1+\frac{\alpha}{\beta+1}}}\right)^{\beta(1+\frac{\alpha}{\beta+1})}  
Q_n^{\delta},
$$
where 
$$
\delta =  \beta\left(1+\frac{\alpha}{\beta+1}\right)^2 -(\alpha+\beta) 
= \frac{\alpha(\beta^2+\alpha\beta-1)}{(\beta+1)^2}.
$$
Because of our choice of $\beta$, $\delta<0$. Hence, the 
convergence of the series $S_1$ implies the convergence of $S_2$ 
(because the series $\sum_{n\geq 1} Q_n^\delta$ converges as 
soon as $\delta<0$ for all real numbers $x$).

One can also deduce that  the   series $S_1$ converges for all $x$ whose 
irrationality exponent $\mu(x) $ is smaller than $2+\frac{\alpha}{\beta+1}$, 
which implies the convergence of the series
$$ 
\sum_{n=1}^{\infty} \frac{Q_{n+2}}{ Q_{n+1 }^{1+\frac{\alpha}{\beta+1}}}.
$$

$\bullet$ $\beta >\beta_\alpha$. In this case, the same 
argument yields that   the convergence of the series $S_2$ 
implies the convergence of $S_1$. 

In terms of Diophantine properties, $S_2$ converges for all 
real numbers whose irrationality exponent $\mu(x)$  is smaller 
than  $1+\sqrt{1+\frac{\alpha}{\beta}} $. Indeed, for such 
an $x$, one has $\sum _{n\geq 1} \frac{Q_{n+1}}{Q_n^{\mu(x)-1}} <\infty$. Writing 
$$ \frac{Q_{n+2}^{\beta} }{ Q_{n }^{\alpha+\beta}}  
=  \left(  \frac{Q_{n+2}}{ (Q_{n+1 })^{\mu(x)-1}}\right)^{\beta}  
\left(  \frac{Q_{n+1}}{ Q_{n }^{\mu(x)-1}}\right)^{\beta(\mu(x)-1)}  
Q_n^{\beta(\mu(x)-1)^2 -\alpha-\beta},
$$
 we deduce that the series $S_2$ converges, since  
our choice for $\mu(x)$ implies $\beta(\mu(x)-1)^2 -\alpha-\beta<0$.

\medskip

$\bullet$ $\beta = \beta_\alpha$. The convergence of the two 
series are close to be equivalent, but they are not, 
depending on the values of $\alpha$ (and $\beta_\alpha$). 
It is simpler to indicate that for all real numbers 
$x$ with irrationality exponent less than 
$1+\sqrt{1+\frac{\alpha}{\beta_\alpha}} $ 
(which coincides with $2+\frac{\alpha}{\beta_\alpha+1}$), 
the two series $S_1$ and $S_2$ converge.

\subsection{Proof of Theorem \ref{theo:3}, part $(iv)$} 

\

The analysis is very similar to the one we performed in the last section, 
so we mention the main steps of the proof.  We write for an irrational $x\in (0,1)$
\begin{multline}
\vert xT(x)\cdots T^{j-1}(x)\vert ^\alpha\cdot \log\Big(\frac{1}{T^j(x)}\Big) =
 \vert xT(x)\cdots T^{j-1}(x)\vert ^\alpha\cdot\log \big (x T(x)...T^{j-1}(x)\big) \\
 -\vert xT(x)\cdots T^{j-1}(x)\vert ^\alpha\cdot \log \big(xT(x)...T^{j-1}(x)T^j(x)\big).
 \end{multline}
The first term in the right hand-side in dominated in absolute value by the 
second term, so we focus on the convergence of this term. Equation~\eqref{majecf} yields
$$
\vert xT(x) \cdots T^{j-1}(x)\vert^\alpha\cdot  \big|\log\big( xT(x)...T^{j-1}(x)T^j(x)\big)\big| 
\ll \frac{\log(q_{j+1})}{(q_j-q_{j-1})^\alpha}.
$$

The same distinction in three cases as in the previous section yields that
\begin{align*}
\sum_{j=1}^{\infty}  \vert xT(x)\cdots T^{j-1}(x)\vert^\alpha\cdot 
 \big|\log \big(xT(x)\cdots  T^j(x)\big) \big|  
& \ll &   \sum_{n=1}^{\infty}     \frac{   \log( Q_{n+1})} { Q_{n}^{\alpha }}  
+    \sum_{n=1}^{\infty}       \frac{ \log ( Q_{n+2}) } { Q_{n}^{\alpha }} 
\\
&& +    \sum_{n=1}^{\infty}  \sum_{m=1}^{A_{n+1}-1}   \frac{  \log (m+1)}{ Q_n^{\alpha}}.
\end{align*}
The second term dominates the first one.  Then we use that
$$  
\sum_{m=1}^{A_{n+1}-1}   \log (m+1) \ll A_{n+1} \log(A_{n+1})
$$
to prove that
\begin{equation*}
\sum_{j=1}^{\infty}  \vert xT(x)\cdots T^{j-1}(x)\vert^\alpha\cdot  
\big|\log \big(xT(x)\cdots  T^j(x)\big) \big|  \ll      \sum_{n=2}^{\infty} 
\frac{ \log ( Q_{n+1}) } { Q_{n-1}^{\alpha }} +    \sum_{n=1}^{\infty}   
 \frac{  A_{n+1} \log(A_{n+1})}{ Q_n^{\alpha}}.
\end{equation*}
Using that $A_{n+1}\leq \frac{Q_{n+1}}{Q_n}$, we finally get~\eqref{condlog}.

\def\refname{Bibliography}

\end{document}